\numberwithin{equation}{section}
\newlist{thmlist}{enumerate}{1}
\setlist[thmlist]{label=(\roman{thmlisti}),noitemsep}
\newtheorem{theorem}{Theorem}[section]
\newtheorem{proposition}[theorem]{Proposition}
\newtheorem{lemma}[theorem]{Lemma}
\newtheorem*{claim*}{Claim}
\newtheorem{Main Conjecture}[theorem]{Main Conjecture}
\theoremstyle{definition}
\newtheorem{defn}[theorem]{Definition}
\newenvironment{example}
  {\pushQED{\qed}\examplex}
  {\popQED\endexamplex}
\newtheorem{remark}[theorem]{Remark}
\newtheorem{notation}[theorem]{Notation}
\theoremstyle{plain}
\newtheorem{question}{Question}
\newtheorem{introthm}{Theorem}
\newcommand{\init}{{\tt in}}
\newcommand{\hgt}{{\rm{ht }}}
\newcommand{\ZZ}{\mathbb{Z}}
\newcommand{\PP}{\mathbb{P}}
\newcommand{\QQ}{\mathbb{Q}}
\newcommand{\A}{\mathcal{A}}
\newcommand{\B}{\mathcal{B}}
\newcommand{\G}{\mathcal{G}}
\renewcommand{\H}{\mathcal{H}}
\renewcommand{\P}{\mathcal{P}}
\newcommand{\bb}{\mathbf{b}}
\newcommand{\be}{\mathbf{e}}
\newcommand{\Tor}{{\sf Tor}}
\DeclareMathOperator{\pd}{pd}
\DeclareMathOperator{\lk}{lk}
\DeclareMathOperator{\del}{del}
\DeclareMathOperator{\reg}{reg}
\DeclareMathOperator{\depol}{depol}
\DeclareMathOperator{\HS}{H}
\DeclareMathOperator{\cone}{cone}
\newcommand{\qand}{\quad \mbox{and} \quad}
\newcommand{\qwith}{\quad \mbox{with} \quad}
\newcommand{\qforall}{\quad \mbox{for all} \quad}
\newcommand{\qforsome}{\quad \mbox{for some} \quad}
\newcommand{\qforeach}{\quad \mbox{for each} \quad}
\newcommand{\qwhere}{\quad \mbox{where} \quad}
\title[Polarization and Gorenstein Liaison]{Polarization and Gorenstein Liaison}
\author[Faridi]{Sara Faridi}
\address
{Department of Mathematics \& Statistics,
Dalhousie University,
Halifax, NS,
Canada 
}
\email{faridi@dal.ca}
\author[Klein]{Patricia Klein}
\address{Department of Mathematics, Texas A\&M University, College Station, TX, USA}
\email{pjklein@tamu.edu}
\author[Rajchgot]{Jenna Rajchgot}
\address{Department of Mathematics and Statistics, McMaster University, Hamilton, ON, Canada}
\email{rajchgoj@mcmaster.ca}
\author[Seceleanu]{Alexandra Seceleanu}
\address{Mathematics Department, University of Nebraska--Lincoln, Lincoln, NE, USA.}
\email{aseceleanu@unl.edu}
\thanks{Faridi was supported by NSERC Discovery Grant 2023-05929. Klein was partially supported by NSF DMS-2246962. Rajchgot was partially supported by NSERC Discovery Grants 2017-05732 and 2023-04800. Seceleanu was partially supported by NSF DMS–2101225 and DMS-2401482.}
\date{\today}
\begin{document}
\begin{abstract}
A major open question in the theory of Gorenstein liaison is whether or not every arithmetically Cohen--Macaulay subscheme of $\PP^n$ can be G-linked to a complete intersection.  Migliore and Nagel showed that, if such a scheme is generically Gorenstein (e.g., reduced), then, after re-embedding so that it is viewed as a subscheme of $\PP^{n+1}$, indeed it can be G-linked to a complete intersection. Motivated by this result, we consider techniques for constructing G-links on a scheme from G-links on a closely related reduced scheme. 

Polarization is a tool for producing a squarefree monomial ideal from an arbitrary monomial ideal.  Basic double G-links on squarefree monomial ideals can be induced from vertex decompositions of their Stanley--Reisner complexes.  Given a monomial ideal $I$ and a vertex decomposition of the Stanley--Reisner complex of its polarization $\P(I)$, we give conditions that allow for the lifting of an associated basic double G-link of $\P(I)$ to a basic double G-link of $I$ itself. We use the relationship we develop in the process to show that the Stanley--Reisner complexes of polarizations of stable Cohen--Macaulay monomial ideals are vertex decomposable.

We then introduce and study polarization of a Gr\"obner basis of an arbitrary homogeneous ideal and give a relationship between geometric vertex decomposition of a polarization and elementary G-biliaison that is analogous to our result on vertex decomposition and basic double G-linkage.
\end{abstract}
\maketitle

\setcounter{tocdepth}{1} 
\tableofcontents

\section{Introduction}
\label{sec:intro}

A broad goal of  liaison theory in its various forms is to study the properties of possibly complicated schemes by relating them to simpler ones, such as complete intersections.

Given a  subscheme  $X$ of $\PP^n$,  there are many questions about $X$ we consider fundamental: What is the codimension of $X$?  What is the degree of $X$?  Is $X$ arithmetically Cohen--Macaulay, and, if not, how close is it to being so?

When $X$ is an arbitrary subscheme, these questions are often quite difficult.  When $X$ is a complete intersection, however, they can become easy, at least if we already know its homogeneous defining ideal $I_X$.  In that case, the codimension of $X$ is the number of minimal generators of $I_X$, the degree of $X$ may be computed by B\'ezout's Theorem (over an algebraically closed field), and $X$ is necessarily arithmetically Cohen--Macaulay.  If we are interested in some other subscheme $X'$ of $\PP^n$, it is a great asset to be able to relate $X'$ to the complete intersection $X$ in a controlled enough manner so as to be able to infer information about $X'$ from information about $X$.  

The inferences are particularly strong in the classical setting of complete intersection liaison (or, CI-liaison), introduced by Peskine and Szpiro \cite{PS74} and studied extensively by many since, including notably Huneke and Ulrich \cite{HU87}, who work in a much more general algebraic setting than we consider here.  Suppose that $C_1$ and $C_2$ are subschemes of the complete intersection $X$ with homogeneous, saturated defining ideals $I_{C_1}$, $I_{C_2}$, and $I_X$, respectively.  We say that $C_1$ and $C_2$ are directly CI-linked by $X$ if $I_X:I_{C_1} = I_{C_2}$ and $I_X:I_{C_2} = I_{C_1}$.  In this case, the $C_i$ are equidimensional of the same dimension as $X$ and neither has an embedded component; $\deg(C_1)+\deg(C_2) = \deg(X)$; $X = C_1 \cup C_2$ if the $C_i$ share no component; the higher cohomology modules of $C_1$ and $C_2$ agree up to graded shift and dual; in particular, $C_1$ and $C_2$ are either both or neither arithmetically Cohen--Macaulay; resolutions of any two of $I_{C_1}$, $I_{C_2}$, and $I_X$ determine a resolution (including maps) of the third; and, if $C_1$ and $C_2$ are arithmetically Cohen--Macaulay, then they are either both or neither (strongly) unobstructed.  See \cite{HU87, KMM+01} for an overview of these and related results and also \cite{MN21} for a focus on applications.   One generates equivalence classes, called CI-liaison classes, from these direct links.  

CI-liaison is able to preserve so much structure because the equivalence classes are relatively small. Indeed, in each codimension $>2$, the arithmetically Cohen--Macaulay schemes are partitioned into infinitely many distinct CI-liaison classes \cite[Corollary 7.10]{KMM+01}. The small size of the CI-liaison classes means that, given a scheme $X$, it may be difficult to find another scheme $Y$ in the same CI-liaison class as $X$ such that $Y$ is meaningfully easier to study than $X$. Thus, in certain settings, it can be valuable to trade some of the structure that CI-liaison preserves for the added flexibility afforded by larger equivalence classes, or, otherwise said, by more allowable links.

One good candidate for this trade is Gorenstein liaison (or, G-liaison), introduced by Schenzel \cite{Sch83}. The definition of G-liaison is the same as that of CI-liaison with the sole modification that $X$ is permitted to be merely arithmetically Gorenstein, not necessarily a complete intersection. G-liaison classes are larger than CI-liaison classes, yet G-liaison still preserves many good properties, as we will soon discuss. A priori, another natural candidate might be to allow $X$ to be any arithmetically Cohen--Macaulay scheme.  However,  this is undesirable because, for that notion of liaison, all equidimensional schemes of the same codimension without embedded components are in the same liaison class \cite[Corollary 0.3]{Wal93}. In this paper, we will be focusing on G-liaison.

 If the subschemes $C_1$ and $C_2$ are directly G-linked by the arithmetically Gorenstein scheme $X$, then, as is the case with CI-liaison, still the $C_i$ are equidimensional of the same dimension as $X$ and neither has an embedded component; $\deg(C_1)+\deg(C_2) = \deg(X)$; $X = C_1 \cup C_2$ if the $C_i$ share no component; and $C_1$ and $C_2$ are either both or neither arithmetically Cohen--Macaulay.  Some statements about resolutions, cohomology, and unobstructedness, however, require restriction on the types of G-links allowed or additional assumptions on the $C_i$.  See again \cite{KMM+01} and also \cite{Nag98}, especially for properties preserved under an even number of direct G-links, a case which will be a main focus throughout this paper.  

Gorenstein liaison has found many applications. For example, it can be used to establish Gr\"obner bases, construct schemes with desired properties, and compute invariants (see, for example, \cite{MN03, CN09, DG11, GMN13, GN14, CNP+17, FGM18, FK20, KLL+20, CDF+, KW, Kle23, Ney23, NRV}). Many of these results owe to the ability of Gorenstein liaison to track Hilbert functions and to its tendency to be compatible with combinatorial constructions.  It is with these sorts of fundamentally graded applications in mind that we are motivated to study Gorenstein liaison in $\PP^n$, i.e., in the setting of standard graded polynomial rings with links that are required to be homogeneous.

The extent to which one might consider the sacrifice of the additional structure provided by CI-liaison to be worthwhile depends, to many, on how many more schemes can be linked to a complete intersection in the more lenient framework of Gorenstein liaison.  The major open question in this area is

\begin{question}\cite[Question 1.6]{KMM+01}\label{quest:ACM==glicci} 
Is every arithmetically Cohen--Macaulay subscheme of $\PP^n$ in the \underline{G}orenstein \underline{l}iaison \underline{c}lass of a \underline{c}omplete \underline{i}ntersection (abbreviated glicci)?
    \end{question}

\noindent Because all complete intersections of a fixed codimension are in the same G-liaison class, it is equivalent to ask if there is exactly one G-liaison class of each codimension containing any (or all) of the arithmetically Cohen--Macaulay schemes.

Large families of arithmetically Cohen--Macaulay subschemes are known to be glicci, including standard determinantal schemes \cite{KMM+01}, mixed ladder determinantal schemes from two-sided ladders \cite{Gor07}, schemes of Pfaffians \cite{DG09}, wide classes of arithmetically Cohen--Macaulay curves in $\PP^4$ \cite{CMR00, CMR01}, arithmetically Cohen--Macaulay schemes defined by Borel-fixed monomial ideals \cite{MN02}, arithmetically Gorenstein schemes \cite{CDH05}, and schemes defined by so-called geometrically vertex decomposable ideals \cite{KR21}.  

Quite powerfully, Gorla \cite{Gor08} obtained the broad result that every scheme defined by minors of a fixed size of a matrix with polynomial entries is glicci provided it has the expected codimension, generalizing results of \cite{KMM+01} and also \cite{Har07}.  This result, which shows that generalized determinantal schemes are often glicci, stands in sharp contract to examples in CI-liaison. 
 For example, even the scheme determined by the maximal minors of a $2 \times 4$ generic matrix fails to be in the CI-liaison class of a complete intersection, a direct consequence of \cite[Corollary 5.13]{HU87}, which shows that determinantal schemes are rarely in the CI-liaison class of a complete intersection.  

In \cite[Theorem 2.3]{MN13}, Migliore and Nagel showed that every generically Gorenstein (e.g., reduced) arithmetically Cohen--Macaulay subscheme of $\PP^n$ is glicci when viewed instead as a subscheme of $\PP^{n+1}$.  With Migliore and Nagel's result in mind, the primary object of study in this paper is the relationship between G-links of schemes and G-links of closely related reduced schemes.  Specifically, we study basic double G-links (\cref{def:basic double G-link}) and elementary G-biliaisons (\cref{def:Gbiliaison}), two related forms of G-liaison that generate the same equivalence classes. In many of the applications of Gorenstein liaison cited above, it is basic double G-linkage or G-biliaison that is implemented.  This history motivates our interest in these particular liaison constructions.

In the monomial case, we use the classical tool of polarization of a monomial ideal (\cref{def:polarization}) to relate monomial basic double G-links of ideals and monomial basic double G-links of their respective polarizations, which enjoy the advantages of the Stanley--Reisner correspondence. Stanley-Reisner theory has long been a cornerstone of combinatorial commutative algebra. 
The rich interplay between combinatorics and algebra that this theory unveils has led to significant advancements in understanding the algebraic structures underlying simplicial complexes. Stanley-Reisner theory interacts with liaison theory by means of special classes of simplicial complexes termed vertex decomposable (see \cref{def:vertex decomposable}). Nagel and R\"omer \cite{NR08} showed that the Stanley--Reisner scheme of a (weakly) vertex decomposable simplicial complex is G-linked to a complete intersection defined by indeterminates via a sequence of G-links that respects the vertex decomposition of the simplicial complex. However, the scope of Stanley-Reisner theory is inherently limited to the squarefree case, prompting a natural question: 

\begin{question}\label{q: nonsquarefree liaison}
Can one extend the relationship between monomial ideals and liaison beyond the setting of squarefree monomial ideals? 
\end{question}

In this paper we answer \cref{q: nonsquarefree liaison}  using the classical tool of polarization. Here and elsewhere, we say that two saturated, homogeneous ideals of a polynomial ring are G-linked if the schemes they define are G-linked. In \cref{s: pol and liaison} we develop methods for establishing a basic double G-link between two not necessarily squarefree monomial ideals based on knowledge of a basic double G-link between their respective polarizations.  

\begin{introthm}[\cref{cor:summaryMonomialCase}]
Let $I$ be a monomial ideal in a polynomial ring and $\P(I)$ its  polarization. The following are equivalent:
\begin{enumerate}
\item $I$ is glicci via a sequence of monomial basic double G-links of shift $1$;
\item $\P(I)$ is glicci via a sequence of monomial basic double G-links of shift $1$, and certain auxiliary ideals involved in the sequence define generically Gorenstein schemes;
\item $\P(I)$ is the Stanley-Reisner ideal of a weakly vertex decomposable simplicial complex, and certain auxiliary ideals arising during vertex decomposition define generically Gorenstein schemes.
\end{enumerate}
\end{introthm}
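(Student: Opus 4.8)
The plan is to prove the three-way equivalence by a cycle of implications, relying on the structural dictionary between polarization, basic double G-linkage, and vertex decomposition that we have built up in \cref{s: pol and liaison}. Before starting, I would recall the key technical inputs: (a) the result of Nagel and R\"omer that a (weakly) vertex decomposable simplicial complex gives a Stanley--Reisner scheme G-linked to a complete intersection of indeterminates via a sequence of basic double G-links adapted to the vertex decomposition, which handles the equivalence $(2) \Leftrightarrow (3)$ in the squarefree setting, and (b) the main lifting theorem of \cref{s: pol and liaison}, which says that a monomial basic double G-link of shift $1$ of $\P(I)$ coming from a vertex decomposition can be pulled back to a monomial basic double G-link of shift $1$ of $I$ precisely when the auxiliary ideal appearing in the link (the ``$C$'' in the basic double link $I' = C \cdot \ell + J$) defines a generically Gorenstein scheme, together with its converse direction that a monomial basic double G-link of $I$ pushes forward to one of $\P(I)$.

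For $(3) \Leftrightarrow (2)$: the forward direction is essentially \cref{def:vertex decomposable} combined with the Nagel--R\"omer machinery, tracking that each step of a vertex decomposition of the Stanley--Reisner complex of $\P(I)$ produces a monomial basic double G-link of shift $1$ whose auxiliary ideal is the Stanley--Reisner ideal of the deletion (or a link), and that these are exactly the ``auxiliary ideals'' referenced in $(2)$; the generically Gorenstein hypotheses then match up verbatim. The reverse direction uses that a monomial basic double G-link of shift $1$ on a squarefree monomial ideal is, by the Stanley--Reisner correspondence established earlier, encoded by a shedding-type vertex of the associated complex, so a terminating sequence of such links assembles into a weak vertex decomposition; one must check the base case (a complete intersection of indeterminates corresponds to a simplex, which is vertex decomposable) and that the auxiliary-ideal conditions transfer, which they do since the ideals are literally the same.

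For $(2) \Leftrightarrow (1)$: this is where the lifting theorem of \cref{s: pol and liaison} does the work. Given $(1)$, a sequence of monomial basic double G-links of shift $1$ on $I$ ending at a complete intersection, I would push each link forward to a monomial basic double G-link of shift $1$ on $\P(I)$ using the compatibility of polarization with basic double linkage (polarization of a complete intersection is again a complete intersection, so the endpoint is preserved), and observe that the auxiliary ideals of the lifted sequence are polarizations of the auxiliary ideals of the original sequence; since a monomial ideal and its polarization define schemes that are generically Gorenstein simultaneously (polarization preserves this property, as polarization is a deformation realized by a regular sequence of linear forms), the generic-Gorenstein clause in $(2)$ is automatic. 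Conversely, given $(2)$, I would apply the lifting theorem step by step: the generically Gorenstein hypothesis on each auxiliary ideal is exactly the condition that licenses pulling that individual basic double G-link back from $\P(I)$ to the corresponding de-polarized ideal, and the shift-$1$ and monomial conditions are preserved, so the whole sequence descends to a sequence realizing $(1)$, the endpoint again being handled because de-polarizing a complete intersection of indeterminates yields a complete intersection.

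The main obstacle I anticipate is bookkeeping in the $(2) \Rightarrow (1)$ direction: one must ensure that the de-polarized sequence really does terminate at a complete intersection and that at each stage the ideal we are linking is genuinely the polarization of the ideal produced at the previous de-polarization stage, so that the lifting theorem applies with its hypotheses intact. This requires carefully propagating the identification ``$J_{k} = \P(\widetilde J_k)$'' along the sequence and confirming that the auxiliary ideals named in $(2)$ are precisely the ones whose generic-Gorenstein-ness the lifting theorem needs --- in other words, that the phrase ``certain auxiliary ideals involved in the sequence'' in the statement of $(2)$ and the phrase ``certain auxiliary ideals arising during vertex decomposition'' in $(3)$ refer to the same list of ideals, made explicit via the correspondence. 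Once the indexing is pinned down, each individual implication is a direct appeal to the results of \cref{s: pol and liaison} together with the preservation of generic Gorenstein-ness, complete intersections, and the shift-$1$ monomial structure under (de)polarization.
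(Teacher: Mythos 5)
Your proposal is correct and follows the same route as the paper: the equivalence $(2)\Leftrightarrow(3)$ is exactly the Nagel--R\"omer dictionary cited as \cite[Remark 2.4]{NR08}, and $(1)\Leftrightarrow(2)$ is the combination of \cref{prop:liftPol} (pulling a basic double G-link of the polarization back to one of $I$, licensed by the $G_0$ hypothesis on the depolarized deletion ideal) with \cref{prop:BDLgivesBDLofPolarization} (pushing a basic double G-link of $I$ forward to its polarization). The iterative bookkeeping you flag --- that each successive $\mathcal{B}$ is a polarization of the corresponding $B$, and that the $G_0$ condition is read off the depolarized deletion ideals --- is already built into those two propositions via \cref{lem:polarization-in-disguise} and \cref{lem:G0pol}.
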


  Using these ideas, and resting on the relationship between vertex decomposable and Gorenstein liaison developed by Nagel and R\"omer \cite{NR08}, we show that polarizations of certain natural classes of monomial ideals are glicci.

\begin{introthm}[\cref{thm:StableVertexDecomp,thm:ArtinianVertexDecomp}]
     The polarizations of stable Cohen--Macaulay monomial ideals and  artinian monomial ideals are glicci, as their  Stanley--Reisner complexes are vertex decomposable.
 \end{introthm}

   \cref{thm:ArtinianVertexDecomp} recovers \cite[Remark 1.8]{M2011}, which states that the Stanley--Reisner complexes of polarizations of artinian monomial ideals are vertex decomposable. Motivation for these investigations comes from recent work on the topology of the Stanley-Reisner complexes of polarized ideals in \cite{AFL22, FM22} as well as from analogous versions of our results which hold for the non-polarized ideals cf. \cite{MN02}. 

   Interpreting a special case of Question 1 algebraically, there is no known example of a squarefree monomial ideal that is Cohen–Macaulay but not glicci.  Murai~\cite{M2011} showed that polarizations of Artinian monomial ideals are vertex decomposable, which we recover as Theorem 3.16.  Combining this result with ~\cite[Theorem 3.3]{NR08} reveals that some of the best known classes of Cohen--Macaulay squarefree monomial ideals studied in combinatorial commutative algebra algebra-- namely those whose polarizations are edge ideals of \say{whiskered graphs}~(\cite{V1990}) and more generally facet ideals of \say{grafted complexes}~(\cite{Fa2005}) -- are glicci~(see also~\cite{CFHNVT2025}).

 Our second contribution is to establish a framework for the investigation of polarization, a concept traditionally associated with monomial ideals, within the context of Gr\"obner bases of arbitrary homogeneous ideals in a polynomial ring. We do this with an eye towards extending \cref{cor:summaryMonomialCase} beyond the monomial setting. As Nagel and R\"omer \cite{NR08} described how to extract a basic double G-link from a vertex decomposition, Klein and Rajchgot \cite{KR21} described how to extract an elementary G-biliaison from what is called a geometric vertex decomposition (see \cref{def:gvd}), introduced by Knutson, Miller, and Yong \cite{KMY09}. A geometric vertex decomposition is a decomposition of an ideal that can be performed on a Gr\"obner basis with respect to a distinguished variable, but only if every element in the Gr\"obner basis is linear or constant as a polynomial in the distinguished variable. Having used polarization of monomial ideals to transfer information gleaned from Nagel and R\"omer's result to ideals that are not squarefree, we sought a comparable framework for transferring information gleaned from Klein and Rajchgot's result to ideals whose Gr\"obner bases are not linear in their respective distinguished variables.
 
 Towards this end, we introduce in \cref{sect:geoPol} geometric polarization, named in analogy to geometric vertex decomposition. Given a Gr\"obner basis $\G$ of an ideal in the polynomial ring, we define its geometric polarization with respect to a variable $y$, denoted $\P_y(\G)$, to be the set of polynomials obtained by replacing each power $y^n$ occurring in an element of $\G$ by $y(y')^{n-1}$, where $y'$ is a new variable (see \cref{def:geoPol}). Repeating this operation on the natural set of generators of a monomial ideal (which forms a Gr\"obner basis) recovers the usual notion of polarization. In the context of ideals that are not monomial ideals, further subtleties occur, such as the fact that $\P_y(\G)$ need not be a Gr\"obner basis of the ideal it generates. We characterize when geometric polarization produces a Gr\"obner basis as follows:
 \begin{introthm}[{\cref{thm:inducedGB}}]
      Fix a term order so that $y$ is lexicographically largest and a Gr\"obner basis $\G$ with respect to this order. The set $\P_y(\G)$ forms a Gr\"obner basis if and only if $y-y'$ is a nonzerodivisor modulo the ideal $(\P_y(\G))$.
 \end{introthm}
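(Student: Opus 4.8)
The plan is to compute the initial ideal of $J := (\P_y(\G))$ in terms of the $y$-polarization of the initial ideal of $I := (\G)$, and then to recognize \emph{both} sides of the asserted equivalence as one and the same Hilbert-series identity $H_{S/J}(t) = \frac{1}{1-t}\,H_{R/I}(t)$. Write $R = k[x_1,\dots,x_m,y]$ and $S = R[y']$, take $<$ to be the given term order on $R$ (so the leading monomial of every $g\in R$ has the same $y$-degree as $g$), and extend $<$ to a term order $<'$ on $S$ in which $y>y'$ are the two largest variables. First I would record that $I$ and $J$ are homogeneous, since geometric polarization preserves degree, and that the substitution $\phi\colon S\to R$, $y'\mapsto y$, has kernel $(y-y')$ and sends each $\P_y(g)$ back to $g$; hence $\phi(J)=I$ and $S/(J+(y-y'))\cong R/I$.

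The key computation I would carry out is $\init_{<'}(\P_y(g)) = \P_y(\init_<(g))$ for every $g\in\G$. Writing $g=\sum_{n=0}^{N}y^nf_n$ with $f_n\in k[x_1,\dots,x_m]$ and $f_N\neq0$, one has $\init_<(g)=y^N\init_<(f_N)$ by the choice of $<$, while $\P_y(g)=f_0+y\bigl(f_1+y'f_2+\cdots+(y')^{N-1}f_N\bigr)$ has $<'$-leading monomial $y(y')^{N-1}\init_<(f_N)$, and these agree after polarizing $y^N\rightsquigarrow y(y')^{N-1}$. Since $\P_y$ preserves divisibility of monomials and $\{\init_<(g):g\in\G\}$ generates $\init_<(I)$ (as $\G$ is a Gröbner basis), it follows that $\bigl(\init_{<'}(\P_y(g)):g\in\G\bigr)=\P_y(\init_<(I))$, and this ideal is contained in $\init_{<'}(J)$ regardless. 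By definition, $\P_y(\G)$ is a Gröbner basis of $J$ with respect to $<'$ if and only if this containment is an equality; as both ideals are homogeneous, that occurs if and only if $S/\P_y(\init_<(I))$ and $S/J$ have equal Hilbert series.

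To finish I would invoke two classical facts. First, passing to an initial ideal preserves Hilbert series, so $H_{S/\init_{<'}(J)}=H_{S/J}$ and $H_{R/\init_<(I)}=H_{R/I}$. Second, polarization multiplies the Hilbert series of a monomial quotient by $\frac{1}{1-t}$ per new variable: for a monomial ideal $M\subseteq R$, the form $y-y'$ is a nonzerodivisor on $S/\P_y(M)$ and $S/(\P_y(M)+(y-y'))\cong R/M$ (cf.\ the polarization theory recalled around \cref{def:polarization}), whence $H_{S/\P_y(M)}(t)=\frac{1}{1-t}H_{R/M}(t)$; note the monomial case is the \emph{easy} special case and is not what is being proved here. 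Taking $M=\init_<(I)$ and combining, $\P_y(\G)$ is a Gröbner basis of $J$ if and only if $H_{S/J}(t)=\frac{1}{1-t}H_{R/I}(t)$. On the other hand, a degree-one form $\ell$ is a nonzerodivisor on a finitely generated graded module $N$ exactly when $H_N(t)=\frac{1}{1-t}H_{N/\ell N}(t)$: the exact sequence $0\to(0:_N\ell)(-1)\to N(-1)\xrightarrow{\ell}N\to N/\ell N\to0$ gives $(1-t)H_N(t)=H_{N/\ell N}(t)-t\,H_{(0:_N\ell)}(t)$, so $(0:_N\ell)=0$ precisely when the stated identity holds. Applying this with $N=S/J$, $\ell=y-y'$, and $N/\ell N\cong R/I$, the condition that $y-y'$ be a nonzerodivisor modulo $J$ is exactly $H_{S/J}(t)=\frac{1}{1-t}H_{R/I}(t)$, which matches the Gröbner-basis criterion just obtained.

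I expect the only point needing genuine care to be the choice of the extension $<'$ and the verification that $\init_{<'}(\P_y(g))=\P_y(\init_<(g))$; the whole argument rests on the $<'$-leading monomial of $\P_y(g)$ carrying the top power of $y'$, which is the reason for demanding that $y'$ be the second-largest variable. Granting this and the standard Hilbert-series behavior of polarization, the proof becomes bookkeeping with Hilbert series and requires no $S$-polynomial computation: the content is simply that the automatic containment $\P_y(\init_<(I))\subseteq\init_{<'}(J)$ is an equality exactly when passing from $S/J$ to $S/(J+(y-y'))$ produces no Hilbert-function drop, i.e.\ exactly when $y-y'$ is a nonzerodivisor modulo $J$.
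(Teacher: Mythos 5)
Your proof is correct and follows essentially the same route as the paper's: you identify the ideal generated by the initial terms of $\mathcal{P}_y(\mathcal{G})$ as the $y$-polarization of $\init_<(I)$, reduce the Gr\"obner-basis condition to the Hilbert-series identity $H_{S/J}(t)=H_{R/I}(t)/(1-t)$, and then recognize that very identity as the criterion (via the four-term exact sequence for multiplication by $y-y'$) for $y-y'$ to be a nonzerodivisor modulo $J$. The only differences are notational: the paper names the two monomial ideals $J$ and $J'$ and phrases the comparison as $\text{H}_{R'/J'}(t)=\text{H}_{R'/\init_\prec(I')}(t)$, but the content --- both sides of the equivalence collapse to the same Hilbert-series equation --- is identical.
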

 
 Many fundamental features of the traditional notion of polarization extend to geometric polarization when $\P_y(\G)$ forms a Gr\"obner basis (see \cref{prop:stFacts}). Specifically, the heights of $(\G)$ and $(\P_y(\G))$ are equal, and the quotients they define are Cohen--Macaulay or not alike.  Moreover, if $(\G)$ is prime, then so too is $(\P_y(\G))$.
 
 After studying these properties, we give a statement relating elementary G-biliaisons of ideals to elementary G-biliaisons of their geometric polarizations (see in \cref{thm:polarLink}).  \cref{thm:polarLink} is the analogue of \cref{prop:liftPol}, described above and which concerns only monomial ideals, in the more general setting of homogeneous ideals in a polynomial ring.

\begin{introthm}[\cref{thm:polarLink}]
   If $\P_y(\G)$ is a Gr\"obner basis, then, under suitable hypotheses, the elementary G-biliaison induced from a geometric vertex decomposition of $(\P_y(\G))$ at $y$ descends to an elementary G-biliaison pertaining to $(\G)$.
\end{introthm}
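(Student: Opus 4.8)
The plan is to run the proof of the monomial statement \cref{prop:liftPol} in this more general setting, with geometric vertex decomposition and the elementary G-biliaison extracted from it in \cite{KR21} replacing vertex decomposition and the basic double G-links of \cite{NR08}, and with the standing hypothesis that $\mathcal{P}_y(\mathcal{G})$ is a Gr\"obner basis -- equivalently, by \cref{thm:inducedGB}, that $y-y'$ is a nonzerodivisor modulo $J:=(\mathcal{P}_y(\mathcal{G}))$ -- supplying the regular element along which the biliaison descends. Write $S:=R[y']$. Two observations set things up. First, writing $g\in\mathcal{G}$ as a polynomial in $y$ and replacing each power $y^{n}$ by $y(y')^{n-1}$ produces a polynomial of $y$-degree at most $1$; since $y$ is lexicographically largest and $\mathcal{P}_y(\mathcal{G})$ is a Gr\"obner basis, $J$ therefore admits an honest geometric vertex decomposition at $y$ in the sense of \cref{def:gvd}, with pieces $C:=C_{y,J}$ and $N:=N_{y,J}$ lying in the $y$-free polynomial subring $S_0\subseteq S$ and satisfying $\operatorname{in}_y(J)=C\cap(N+(y))$; moreover $N$, being generated by the $y$-free elements of $\mathcal{G}$, involves neither $y$ nor $y'$. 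Second, the substitution $y'\mapsto y$ identifies $S/(y-y')$ with the original ring $R$ and carries $\mathcal{P}_y(\mathcal{G})$ back to $\mathcal{G}$; writing $\pi\colon S\to R$ for the resulting surjection and $\overline{(\,\cdot\,)}$ for its effect on ideals, we get $\overline{J}=(\mathcal{G})$ and ideals $\overline{C},\overline{N}\subseteq R$.

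Taking the ``suitable hypotheses'' to be precisely the hypotheses under which \cite{KR21} extracts an elementary G-biliaison from this decomposition -- nondegeneracy of the decomposition, the arithmetic Cohen--Macaulayness and height conditions on $C$ and $N$, and the hypotheses that make the auxiliary schemes occurring in the biliaison datum (the ambient scheme and any complete-intersection or arithmetically Gorenstein schemes used there) well-behaved, in particular generically Gorenstein after applying $\pi$ -- I would invoke \cite{KR21} to obtain the elementary G-biliaison it prescribes, relating the two subschemes read off from $C$ and $N$ on an arithmetically Cohen--Macaulay, generically Gorenstein ambient scheme, with $V(J)$ tied into the picture through the identity $\operatorname{in}_y(J)=C\cap(N+(y))$.

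The substance of the proof is the descent along $\pi$. The point is that $y-y'$ is a nonzerodivisor modulo every ideal entering the biliaison datum. Modulo $C$, $N$, and their sum and intersection it is automatic: those ideals are extended from the $y$-free ring $S_0$, and $y-y'$ is monic of degree one in $y$ over $S_0$, hence a nonzerodivisor on $(S_0/\mathfrak{a})[y]$ for any ideal $\mathfrak{a}\subseteq S_0$. Modulo $J$ it is exactly our hypothesis, via \cref{thm:inducedGB}. Modulo the ambient ideal and the remaining auxiliary ideals it follows once one knows those ideals are unmixed (they are arithmetically Cohen--Macaulay under the hypotheses) and that $y-y'$ avoids their minimal primes -- a routine analysis of the minimal primes of $\operatorname{in}_y(J)=C\cap(N+(y))$ reduces this to the facts that $C$ is $y$-free and $N$ is both $y$-free and $y'$-free, both already recorded. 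Granting this, cutting the entire biliaison datum by $y-y'$ preserves arithmetic Cohen--Macaulayness, lowers heights uniformly by one (as in \cref{prop:stFacts}), respects the intersections and sums of ideals involved, and carries the defining graded module isomorphism of the elementary G-biliaison to the corresponding isomorphism over the descended ambient scheme. The one input that is \emph{not} automatically preserved is the generic Gorenstein ($G_0$) property of the ambient scheme, which an elementary G-biliaison requires: a localization of the descended ambient ring at one of its minimal primes is a localization of the original ambient ring at a non-minimal prime, so Gorenstein-ness there can be lost. This is exactly the role of the generically Gorenstein clause in the hypotheses, the analogue for \cref{thm:polarLink} of the generically Gorenstein conditions in \cref{cor:summaryMonomialCase}(ii)--(iii). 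With it in hand, the descended datum exhibits an elementary G-biliaison between the descents of those two subschemes, and, because $\overline{J}=(\mathcal{G})$ and the decomposition $\operatorname{in}_y(J)=C\cap(N+(y))$ descends along with everything else, this is an elementary G-biliaison pertaining to $(\mathcal{G})$.

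The main obstacle is this compatibility of the biliaison with the section by $y-y'$: one must verify both that $y-y'$ is a nonzerodivisor modulo each auxiliary ideal -- so the hypersurface section by the (non-generic) form $y-y'$ behaves like a regular section, paralleling the classical fact that differences of original and new variables are nonzerodivisors modulo a polarization -- and that the generic Gorenstein property of the ambient scheme of the biliaison survives that section, which fails in general and is why the statement is qualified ``under suitable hypotheses.'' A further point, already settled inside \cite{KR21} and simply inherited here, is that a geometric vertex decomposition ties $V(J)$ to $V(\operatorname{in}_y J)$ despite Gr\"obner degeneration not being a G-link in general; under our hypotheses that relation is part of the datum transported by $\pi$.
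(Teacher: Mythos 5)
Your overall strategy --- set up the geometric vertex decomposition of $J=(\mathcal{P}_y(\mathcal{G}))$ at $y$, invoke the Klein--Rajchgot biliaison, and descend along the depolarization $\pi\colon R[y']\to R$, $y'\mapsto y$ --- is the right idea and matches the paper's $(2)\Rightarrow(1)$ direction of \cref{thm:polarLink}. But the mechanism you propose for the descent differs from the paper's, and one of your central worries is misplaced.

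The paper does not carry out an abstract ``cut the whole datum by the regular element $y-y'$'' argument. Instead it makes the isomorphism explicit: writing $\mathcal{G}=\{yd_1+r_1,\dots,yd_k+r_k,h_1,\dots,h_\ell\}$, it shows that the G-biliaison on the polarization side can be taken to be multiplication by a fraction $v'/d'$ with $v'=yd'+r$, and that the compatibility relations $rd_i'-d'r_i\in NR'$ (which hold because $\mathcal{P}_y(\mathcal{G})$ is a Gr\"obner basis) depolarize to $rd_i-dr_i\in N$, so that multiplication by $\mathrm{depol}(v')/\mathrm{depol}(d')$ is the desired isomorphism $(D/N)[-1]\cong I/N$. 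Your version, cutting the module isomorphism by $y-y'$, is plausible but requires more than you record: to identify $(D'/NR')\otimes R'/(y-y')$ with $D/N$ (and likewise for $J$) you need $y-y'$ regular on $R'/D'$ as well as on $R'/NR'$ and $R'/J$, plus the bookkeeping that the quotient module really is the depolarized one; the paper sidesteps all of this by checking the map directly.

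The larger issue is your treatment of the $G_0$ condition. You write that generic Gorenstein-ness of the ambient scheme can be lost under the section by $y-y'$ and that this is ``exactly the role of the generically Gorenstein clause in the hypotheses,'' by analogy with \cref{cor:summaryMonomialCase}(ii)--(iii). That analogy breaks down here. In \cref{thm:polarLink} the ambient ideal $N=(h_1,\dots,h_\ell)$ involves neither $y$ nor $y'$ (the $h_j$ are exactly the generators of $\mathcal{G}$ with no $y$), so $R'/NR'\cong (R/N)[y']$ and the $G_0$ and Cohen--Macaulay conditions pass both ways automatically; the paper notes exactly this when it says these conditions ``are therefore common to (1), (2), and (3).'' In the monomial setting, by contrast, the ideal of the deletion $\mathcal{A}$ polarizes in \emph{all} the variables at once, so depolarizing genuinely can destroy $G_0$ (\cref{ex:monomialPol}), which is why an extra hypothesis is needed there. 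One-variable geometric polarization is strictly better-behaved in this respect, and your proposal should not import a hypothesis the theorem does not (and need not) make.

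So: the proof plan would work with the nonzerodivisor bookkeeping filled in, but the paper's proof is more concrete (it produces the fraction explicitly and depolarizes it), and the $G_0$ obstruction you flag is not present in this theorem.
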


Throughout this paper, we let $\kappa$ denote an arbitrary field.

\section{Monomial ideals and Gorenstein liaison: background}
\label{sec:background}

\subsection{Vertex decomposition and basic double G-links}\label{s:vertex-decomposition}

 Let $\Delta$ be a simplicial complex on vertex set $[n] = \{1,2,\dots, n\}$, i.e., a set of subsets of $[n]$ that is closed under taking subsets. An element $F \in \Delta$ is called a \textbf{face}.  The dimension of the face $F$ is $|F|-1$, and the dimension of $\Delta$ is the maximum dimension of any of its faces.  A face that is maximal under inclusion is called a \textbf{facet}, and a face of dimension $0$ is called a \textbf{vertex}.  We call $\Delta$ \textbf{pure} if all of its facets have the same dimension. We do not require $\{i\} \in \Delta$ for each $i \in [n]$.
 
There is a one-to-one correspondence between simplicial complexes on the vertex set $[n]$ and square-free monomial ideals in the polynomial ring $R= \kappa[x_1, \ldots, x_n]$, where  a simplicial complex $\Delta$
 corresponds to its {\bf Stanley--Reisner ideal} $$I_\Delta = \Big (\prod_{i \in U} x_i: U \subseteq [n], U \notin \Delta \Big ).$$  In this case $\Delta$ is called the {\bf Stanley--Reisner complex of} $I_\Delta$.  The minimal primes of $I_\Delta$ are generated by the variables corresponding to the complements of the facets of $\Delta$.  In particular, the Krull dimension of $R/I_{\Delta}$ is $\dim(\Delta)+1$, and $R/I_\Delta$ is equidimensional if and only if $\Delta$ is pure.  

For a simplicial complex $\Delta$ and vertex $i\in [n]$ not in  $\Delta$, we call $$\cone_\Delta(i)=\{F \cup A : F \in \Delta, \ A\subseteq \{i\}\}$$ 
the \textbf{cone over $\Delta$ with apex $i$.} Note that $$I_\Delta = I_{\cone_\Delta(i)}+(x_i).$$
 
Given a simplicial complex $\Delta$ and vertex $v\in \Delta$, define the following subcomplexes:
\begin{itemize}
\item the {\bf deletion} of $v$ is the set $\del_{\Delta}(v) = \{F\in \Delta: F\cap \{v\} = \emptyset\}$; 
\item the {\bf link} of $v$ is the set $\lk_{\Delta}(v) = \{F\in \del_{\Delta}(v): F\cup\{v\}\in \Delta\}$.
\end{itemize}

Note that $\lk_\Delta(v)$ and $\del_\Delta(v)$ are naturally complexes on $[n] \setminus \{v\}$.  When we write $I_{\lk_\Delta(v)}$ and $I_{\del_\Delta(v)}$, we will understand these to be the Stanley--Reisner ideals of $\lk_\Delta(v)$ and $\del_\Delta(v)$, respectively, computed in the ($n-1$)-dimensional polynomial ring $\kappa[x_1, \ldots, \widehat{x_v}, \dots, x_n]$.  Abusing notation slightly, we will typically write $\lk_\Delta(v)$ for $\lk_\Delta(v)R$ and $\del_\Delta(v)$ for $\del_\Delta(v)R$.

This gives the following equality: 
\begin{equation}\label{eq:linkdel}  I_{\Delta} = I_{\del_\Delta(v)}+x_vI_{\lk_\Delta(v)}.
\end{equation}

\begin{defn}
 \label{def:vertex decomposable}
A simplicial complex $\Delta$ is {\bf vertex decomposable} if $\Delta$ is pure and if either
\begin{enumerate}
    \item $\Delta = \{\emptyset\}$ or $\Delta$ is a simplex, or
    \item there is a vertex $v\in \Delta$ such that $\lk_\Delta(v)$ and $\del_{\Delta}(v)$ are vertex decomposable. 
\end{enumerate}
\end{defn}

If $\Delta$ is a cone with apex $v$, then $\lk_\Delta(v) = \del_\Delta(v)$. Otherwise, in situation $(2)$ of  \cref{def:vertex decomposable}, $\dim(\del_\Delta(v)) = \dim(\lk_\Delta(v))+1$, and we call $v$ a {\bf shedding vertex}.

\begin{example}\label{e:running} Let $\Delta$ be the simplicial complex with facets $\{1,4\}$, $\{2,3\}$, and $\{2,4\}$. Then 
$$I_\Delta = (x_1x_2,x_1x_3,x_3x_4)=(x_2,x_3)\cap (x_1,x_4) \cap (x_1,x_3).$$ 
Since all facets of $\Delta$ have dimension $1$, $\Delta$ is a pure simplicial complex. Then $\lk_\Delta(1)$ is the simplex with facet $\{4\}$, and $\Delta' =\del_\Delta(1)$ is the (pure) simplicial complex whose facets are $\{2,3\}$, $\{2,4\}$. Continuing, $\lk_{\Delta'}(4)$ is the simplex with facet $\{2\}$, and $\del_{\Delta'}(4)$ is the simplex with facet $\{2,3\}$.  Thus, $\Delta$ is vertex decomposable.

$$\begin{array}{cll}
&
\begin{tikzpicture}
\tikzstyle{point}=[inner sep=0pt]
\node [point] at (-1,1) {$\lk_\Delta(1) \colon$};
\node (b)[point,label=right:$4$] at (0,1) {\tiny{$\bullet$}};
\end{tikzpicture}
&
\begin{tikzpicture}
\tikzstyle{point}=[inner sep=0pt]
\node [point] at (-1,1) {$\lk_{\Delta'}(4) \colon$};
\node (c)[point,label=right:$2$] at (0,1) {\tiny{$\bullet$}};
\end{tikzpicture} 
\\
\begin{tikzpicture}
\tikzstyle{point}=[inner sep=0pt]
\node at (-1,0.5) {$\Delta \colon$};
\node at (2,1) {$\nearrow$};
\node at (2,.5) {$\longrightarrow$};
\node (a)[point,label=left:$1$] at (0,0) {\tiny{$\bullet$}};
\node (b)[point,label=left:$4$] at (0,1) {\tiny{$\bullet$}};
\node (c)[point,label=right:$2$] at (1,1) {\tiny{$\bullet$}};
\node (d)[point,label=right:$3$] at (1,0) {\tiny{$\bullet$}};
\draw (a.center) -- (b.center);
\draw (b.center) -- (c.center);
\draw (c.center) -- (d.center);
\end{tikzpicture}
& 
\begin{tikzpicture}
\tikzstyle{point}=[inner sep=0pt]
\node at (-1.5,0) {$\del_\Delta(1) \colon$};
\node at (2,.5) {$\nearrow$};
\node at (2,0) {$\longrightarrow$};
\node (b)[point,label=left:$4$] at (0,.5) {\tiny{$\bullet$}};
\node (c)[point,label=right:$2$] at (1,.5) {\tiny{$\bullet$}};
\node (d)[point,label=right:$3$] at (1,-.5) {\tiny{$\bullet$}};
\draw (b.center) -- (c.center);
\draw (c.center) -- (d.center);
\end{tikzpicture}
&
\begin{tikzpicture}
\tikzstyle{point}=[inner sep=0pt]
\node at (-1,.5) {$\del_{\Delta'}(4) \colon$};
\node (c)[point,label=right:$2$] at (1,1) {\tiny{$\bullet$}};
\node (d)[point,label=right:$3$] at (1,0) {\tiny{$\bullet$}};
\draw (c.center) -- (d.center);
\end{tikzpicture} 
\end{array}
$$

By contrast, decomposing $\Delta$ at the vertex $4$ is not a valid step towards showing $\Delta$ to be vertex decomposable because $\del_\Delta(4)$ has facets $\{2,3\}$ and $\{1\}$ and is therefore not pure.
 \[
 \begin{tikzpicture}
\tikzstyle{point}=[inner sep=0pt]
\node at (-1.5,0) {$\del_\Delta(4) \colon$};
\node (a)[point,label=left:$1$] at (0,0) {\tiny{$\bullet$}};
\node (c)[point,label=right:$2$] at (1,1) {\tiny{$\bullet$}};
\node (d)[point,label=right:$3$] at (1,0) {\tiny{$\bullet$}};
\draw (c.center) -- (d.center);
\end{tikzpicture} \qedhere
\]
\end{example}

\begin{defn} A pure $d$- dimensional simplicial complex $\Delta$ is {\bf shellable} if the  facets of $\Delta$ can be ordered as $F_1,\ldots, F_r$ such that for each $i\geq 1$, if $\Delta_i$ is the simplicial complex with facets $F_1, \ldots, F_i$, then $\Delta_i \cap F_{i+1}$  is pure of dimension $d-1$.
\end{defn}

If $\Delta$ is vertex decomposable, then $\Delta$ is Cohen--Macaulay (i.e., the quotient by its Stanley--Reisner ideal is Cohen--Macaulay).  Specifically, every vertex decomposable simplicial complex is shellable (see\cite{PB80}, where vertex decomposability was introduced), and every shellable simplicial complex is Cohen--Macaulay, which can be deduced from Reisner's criterion \cite{Rei76}, a topological criterion on $\Delta$ characterizing when $\Delta$ is Cohen--Macaulay.

\begin{defn}\label{d:wvd}
A simplicial complex $\Delta$ is {\bf weakly vertex decomposable} if $\Delta$ is pure and if either
\begin{enumerate}
    \item $\Delta = \{\emptyset\}$ or $\Delta$ is a simplex, or
    \item  there is a vertex $v\in \Delta$ such that $\lk_\Delta(v)$ is vertex decomposable and $\del_{\Delta}(v)$ is Cohen--Macaulay. 
\end{enumerate}
\end{defn}

In situation $(2)$ of \cref{d:wvd}, if $\Delta$ is not a cone with apex $v$, then $\dim(\del_\Delta(v)) = \dim(\lk_\Delta(v))+1$, and we call $v$ a {\bf weak shedding vertex}.

From the fact that a vertex decomposable simplicial complex is Cohen--Macaulay, it follows that every vertex decomposable simplicial complex is weakly vertex decomposable.

In order to describe the relationship between vertex decomposition and Gorenstein liaison, we now require the notion of a basic double G-link, for which we first recall some standard ring theoretic definitions.

Let $R$ be a ring and $I$ an ideal of $R$. We say that $R/I$ is \textbf{generically Gorenstein}, or $\mathbf{G_0}$, if $(R/I)_P$ is Gorenstein for every minimal prime $P$ of $I$. We say that $I$ is \textbf{unmixed} if $\hgt(I) = \hgt(P)$ for every associated prime $P$ of $I$.  If $R$ is the homogeneous coordinate ring of projective space and $I$ is the saturated defining ideal of the scheme $X$, then $I$ is unmixed if and only if $X$ is equidimensional and has no embedded components.  Recall that a Cohen--Macaulay ideal is necessarily unmixed.

 We recall the definition of Gorenstein linkage of homogeneous ideals of a polynomial ring.

\begin{defn}\label{def:GorensteinLikage}
    Let $R$ be a polynomial ring over a field, and let $I_{C_1}$ and $I_{C_2}$ be homogeneous, saturated ideals of $R$. We say that $I_{C_1}$ and $I_{C_2}$ are \textbf{directly Gorenstein linked (or G-linked)} by a homogeneous, saturated ideal $I_X$ if $I_X \subseteq I_{C_1} \cap I_{C_2}$, $R/I_X$ is Gorenstein, $I_X:I_{C_1} = I_{C_2}$, and $I_X:I_{C_2} = I_{C_1}$.  
\end{defn}

We will be interested in applying \cref{def:GorensteinLikage} to unmixed ideals.  If $I_{C_1}$ is unmixed, then $I_X:I_{C_1} = I_{C_2}$ implies $I_X:I_{C_2} = I_{C_1}$ and $I_{C_2}$ is also unmixed.  Let $C_1$, $C_2$, and $X$ denote the subschemes of projective space defined by $I_{C_1}$, $I_{C_2}$, and $I_X$, respectively.  Then it is the same to say that $I_{C_1}$ and $I_{C_2}$ are G-linked by $I_X$ and that $C_1$ and $C_2$ are G-linked by $X$.

\begin{defn}\label{def:basic double G-link} 
Let $R$ be a polynomial ring over a field, and let $f$ be a homogeneous degree $d$ element of $R$.  Let $A\subset B$ be homogeneous, unmixed proper ideals. Then the ideal $fB+A$ is called a \textbf{basic double G-link of $B$ on $A$ of shift $d$} if  
\begin{itemize}
    \item $R/A$ is Cohen--Macaulay and $G_0$;
    \item $\hgt(A) = \hgt(B)-1$; and
    \item $A:f=A$.
\end{itemize}
   If $A$ and $B$ are monomial ideals and $f$ is a monomial, we call $f B+A$ a \textbf{monomial basic double G-link of $B$ on $A$}. 

   As the name suggests, if $C$ is a basic double G-link of $B$, then $B$ is G-linked to $C$ in two steps \cite[Proposition 5.10]{KMM+01}, i.e., there is an ideal $D$ so that $B$ is directly G-linked to $D$ and $D$ is directly G-linked to $C$. 

Abusing notation, we will sometimes refer to an equality $C = fB+A$ as a basic double G-link to mean that $C$ is a basic double G-link of $B$ on $A$.
\end{defn}

For a discussion of basic double G-linkage in terms of linear equivalence of divisors, see \cite[Section 4]{KMM+01}.

\begin{example}\label{e:running-2}
    Continuing with $I_\Delta =(x_1x_2,x_1x_3,x_3x_4)$ 
    as in \cref{e:running}, we set $A=(x_3x_4)$ and $B=(x_2,x_3)$. Then $$I_\Delta = x_1(x_2,x_3)+(x_3x_4)$$
     is a monomial basic double G-link of $(x_2,x_3)$ on $(x_3x_4)$ of shift $1 = \deg(x_1)$. Observe that $(x_2,x_3)$ is the Stanley--Reisner ideal of $\lk_\Delta(1)$ and that $(x_3x_4)$ is the Stanley--Reisner ideal of 
     $\del_\Delta(1)$.
        We will return to $I_\Delta$ and its decomposition at $x_1$ (with slightly different notation) again in \cref{ex:monomialPol}.
\end{example}

In \cite{NR08}, Nagel and R\"omer  completely characterized the monomial basic double G-links of shift $1$ of Stanley--Reisner ideals.  Specifically, they showed that, if $\Delta$ is pure and if $\del_{\Delta}(v)$ is Cohen--Macaulay and has the same dimension as $\Delta$, then $I_\Delta$ is a basic double $G$-link of the cone over the Stanley--Reisner ideal of $\lk_{\Delta}(v)$.  Conversely, if there exists a vertex $v$ of $\Delta$ so that $I_\Delta = x_v I_{\lk_{\Delta}(v)}+I_{\del_{\Delta}(v)}$ is a basic double $G$-link, then $v$ is a weak shedding vertex of $\Delta$. It follows (see \cite[Theorem 3.3]{NR08}) that the Stanley--Reisner ideal of a weakly vertex decomposable simplicial complex $\Delta$ is G-linked to an ideal generated by indeterminates via a sequence of monomial basic double G-links of shift $1$, and, in particular, that the quotient by that ideal is Cohen--Macaulay.

\subsection{Polarization}

Polarization is a method for transforming a monomial ideal in a polynomial ring into squarefree monomial ideal in an enlarged polynomial ring, while preserving many of the algebraic properties of the original ideal.

Hartshorne~\cite{Har66} introduced the technique of \emph{distraction} and a related auxiliary construction, now known as polarization, in the course of proving connectedness of the Hilbert scheme. Polarization was later rediscovered independently by Fr\"oberg~\cite{Fr} and Weyman~\cite{W}, possibly among others.

To motivate the definition, we give an example of an ideal $I$ and its polarization, $\P(I)$: 
\begin{equation}\label{e:simple}
I=(x^2y^3,y^2z,xz) \quad \P(I)=(x_1x_2y_1y_2y_3,y_1y_2z_1,x_1z_1).
\end{equation}

The formal definition of polarization is below.

 \begin{defn}
 \label{def:polarization}
 Given a monomial ideal $I$ of a polynomial ring $R=\kappa[x_1,\ldots, x_n]$ with minimal monomial generating set $G(I)$,
 the {\bf polarization} of $I$ is the squarefree monomial ideal  
 \[
 \P(I)=\left (\prod_{i=1}^n\prod_{j=1}^{a_i}x_{i,j} \colon \prod_{i=1}^n x_i^{a_i}\in G(I)\right)
 \]
 in the ring $S=\kappa[x_{i,j}\mid 1\leq i\leq n, 1\leq j\leq e_i]$, where $e_i$ is the maximum exponent of $x_i$ that appears in an element of $G(I)$.  
 
 A {\bf partial polarization} of $I$ with respect to the vector $\bb=(b_1,\ldots,b_n) \in \ZZ_+^n$ is the ideal  
 \[
 \P_{\bb}(I)=\left ( \prod_{i=1}^n\left(x_{i,{b_i}}^{\max\{0,a_i-b_i\}}\prod_{j=1}^{\min\{a_i,b_i\}}x_{i,j} \right) : \prod_{i=1}^n x_i^{a_i}\in G(I)\right)
 \]
 in the ring $S=\kappa[x_{i,j}: 1\leq i\leq n, 1\leq j\leq e_i]$, where $e_i$ is either the maximum exponent of $x_i$ that appears in any element of $G(I)$ or $b_i$, whichever is less.  
\end{defn}

For example, the partial polarization of the ideal $I$ in \cref{e:simple} with respect to the vector $b=(2,2,3)$ is 
$$\P_{\bb}(I)=(x_1x_2y_1y_2^2,y_1y_2z_1,x_1z_1).$$

\noindent In general, $$I=\P_{(1,\ldots, 1)}(I),$$ up to a relabeling of the variables,  and 
$$\P(I)=\P_{\bb}(I) \qwhere b_i = \max \{a \colon x_i^a \mid \mu \mbox{ for some } \mu \in G(I)\} \qforall i \in [n].$$

It is worth noting that while the polarization of a monomial ideal is unique (up to a relabeling of variables), many non-isomorphic monomial ideals may have the same polarization. For example, the ideal $\P(I)$ appearing in \cref{e:simple} is also, up to relabeling,  the polarization of $(a^2b^2c, b^2d, ad)$ and $(uvwt^2,wtz,uz)$, among others. Specifically, the reverse operation \say{depolarization} is not well-defined except when one retains information about $I$ (as in  \cref{not:depolarization}).

Because there is a regular sequence $f_1, \ldots, f_k$ of elements of the form $x_i-x_{i,j}$ so that $$R/I \cong S/(\P(I)+(f_1, \ldots, f_k)),$$ $R/I$ is a complete intersection, Gorenstein or Cohen--Macaulay if and only if $S/\P(I)$ is \cite{Fr} (hence, if and only if any $S/\P_{\bb}(I)$ is).  Moreover, $\hgt(I) = \hgt(\P(I))$, and $(x_{i_1}, \ldots, x_{i_r})$ is an associated prime of $I$ if and only if some $(x_{i_1,j_1}, \ldots, x_{i_r,j_r})$ is an associated prime of $\P(I)$ by \cite[Proposition 4.4]{Har66} or \cite[Proposition 2.3]{Far06}.

\section{Polarization and basic double G-links}\label{s: pol and liaison}
 We begin this section by expressing monomial ideals in a form appropriate for study via basic double G-links. 
 
 \begin{notation}\label{not:minimal-monomial-generators}
     If $I$ is a monomial ideal of a polynomial ring, let $G(I)$ denote the set of minimal monomial generators of $I$.
 \end{notation}
 
\begin{lemma}\label{lem:formOfBDL}
Suppose that $A$, $B$, and $C$ are monomial ideals of the polynomial ring $R = \kappa[x_1, \ldots, x_n]$ and that $z = x_i$ for some $i \in [n]$.  The following two sets of conditions are equivalent:
\begin{enumerate}
    \item $C = zB+A$, $A:z =A$, and $A\subseteq B$;
    \item  $B = C:z$ and $A = (\sigma \in G(C) : z \nmid \sigma)$.
\end{enumerate}
In particular, the ideals $A$ and $B$ satisfying the conditions in (1) are uniquely determined by $C$. Moreover, when these conditions are satisfied, $A=B$ if and only if $z$ does not divide any element of $G(C)$.  
 \end{lemma}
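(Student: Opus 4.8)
The statement is purely combinatorial about monomial ideals, so I would prove it by direct manipulation of monomial generators together with the elementary fact that for a monomial ideal $J$ and a variable $z$, $J : z$ is again a monomial ideal whose generators are obtained by dividing each generator of $J$ by $\gcd$ with $z$ (equivalently, $\sigma/z$ if $z \mid \sigma$ and $\sigma$ otherwise), and that $J : z = J$ precisely when $z$ divides no element of $G(J)$. I would split the proof into the two implications $(1) \Rightarrow (2)$ and $(2) \Rightarrow (1)$, and then dispose of the ``in particular'' and ``moreover'' clauses, which fall out immediately once the equivalence is in hand.

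\textbf{The implication $(1) \Rightarrow (2)$.} Assume $C = zB + A$ with $A : z = A$ and $A \subseteq B$. First I would compute $C : z$. Since $C = zB + A$, we have $C : z = (zB : z) + (A : z) = B + A = B$, where $zB : z = B$ holds for any monomial ideal (no cancellation issues since everything is monomial), $A : z = A$ is the hypothesis, and $A \subseteq B$ collapses the sum. Strictly I should be slightly careful that $(zB+A):z = (zB:z)+(A:z)$; this is valid because for monomial ideals $(J+K):z$ is generated by $\{\mu/\gcd(\mu,z) : \mu \in G(J)\cup G(K)\}$, which is exactly $G(J:z) \cup G(K:z)$. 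So $B = C : z$. Next, for the formula for $A$: I need to identify $A$ with the subideal of $C$ generated by those minimal generators of $C$ not divisible by $z$. Here I would argue that $G(C) = z\cdot G(B)^{\,\prime} \,\cup\, G(A)^{\,\prime}$ after minimalization, and that the generators of $A$ are exactly the ones in $G(C)$ not divisible by $z$: since $A : z = A$, no generator of $A$ is divisible by $z$, so every minimal generator of $A$ is a candidate; conversely any minimal generator of $C$ not divisible by $z$ cannot come from the $zB$ part and hence lies in $A$, and it must be a minimal generator of $A$ (if it were a proper multiple of another generator of $A \subseteq C$, it would not be minimal in $C$). The one subtlety to handle is the possibility that a generator coming from $zB$ coincides with or is divisible by a generator of $A$ — this only removes elements from the $zB$ side, never adds non-$z$-divisible elements spuriously, so it does not affect the identification of $A$. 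This bookkeeping is the main obstacle: being precise about minimalization when the two pieces $zB$ and $A$ overlap.

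\textbf{The implication $(2) \Rightarrow (1)$.} Assume $B = C : z$ and $A = (\sigma \in G(C) : z \nmid \sigma)$. That $A : z = A$ is immediate since no generator of $A$ is divisible by $z$. That $A \subseteq B$: any $\sigma \in G(C)$ with $z \nmid \sigma$ lies in $C$, hence $\sigma = \sigma \cdot 1 \in C : z = B$ (indeed $z\sigma \in C$ trivially, wait — more directly $\sigma \in C \subseteq C:z$ since $C \subseteq C:z$ always; but we need $\sigma \in C:z$, which holds because $zC \subseteq C$... actually $\sigma \in C$ and $C \subseteq C:z$ gives $\sigma \in B$). Finally I must verify $C = zB + A$. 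The inclusion $zB + A \subseteq C$: $zB = z(C:z) \subseteq C$ and $A \subseteq C$ by construction. The reverse inclusion $C \subseteq zB + A$: take $\sigma \in G(C)$; if $z \nmid \sigma$ then $\sigma \in A$; if $z \mid \sigma$, write $\sigma = z \tau$ with $\tau$ a monomial, and $\tau \in C : z = B$, so $\sigma \in zB$. Hence $G(C) \subseteq zB + A$ and so $C \subseteq zB+A$.

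\textbf{The remaining clauses.} Uniqueness of $A$ and $B$ in $(1)$ is now immediate: any pair satisfying $(1)$ must satisfy $(2)$, and $(2)$ determines $A$ and $B$ explicitly in terms of $C$ (and $z$). For the ``moreover'': if $z$ divides no element of $G(C)$, then $C : z = C$, so $B = C$, and also every generator of $C$ is non-$z$-divisible, so $A = C$, giving $A = B$; conversely if $A = B$, then since $A : z = A$ we get $z$ divides no element of $G(A) = G(B)$, and because $C = zB + A = zA + A = A$ (as $zA \subseteq A$), we have $G(C) = G(A)$, none of whose elements is divisible by $z$. I expect the whole argument to be short once the generator-level description of colon ideals is stated up front; the only place demanding genuine care is the minimalization argument in $(1) \Rightarrow (2)$, and I would isolate it as a small sub-claim to keep the exposition clean.
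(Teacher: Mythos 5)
Your proof is correct, and the overall architecture — prove $(1)\Rightarrow(2)$, prove $(2)\Rightarrow(1)$, then observe that the ``in particular'' and ``moreover'' clauses are immediate — matches the paper's. The one genuine difference is in how you get $B = C:z$ in the harder direction $(1)\Rightarrow(2)$. You invoke the additivity of colons with respect to a variable for monomial ideals, $(zB+A):z = (zB:z)+(A:z) = B + A = B$, which gives the equality in a single line. The paper instead establishes only the easy inclusion $B\subseteq C:z$ directly and then proves $C:z\subseteq B$ by a case analysis on an arbitrary $\gamma\in G(C:z)$: if $\gamma\in C$ then $z\nmid\gamma$ by minimality so $\gamma\in A\subseteq B$; if $\gamma\notin C$ then one argues $z\gamma$ is a minimal generator of $C$ lying outside $A$, forcing $\gamma\in G(B)$. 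Your colon identity is cleaner and sidesteps the delicate part of the paper's argument, at the modest cost of verifying (as you do) that $(J+K):z=(J:z)+(K:z)$ for monomial ideals and a variable $z$. For the other piece of $(1)\Rightarrow(2)$, namely $A=(\sigma\in G(C):z\nmid\sigma)$, your sketch is right but under-formalized; the clean version is: any $\sigma\in G(C)$ with $z\nmid\sigma$ lies in $G(zB+A)\subseteq\{z\mu:\mu\in G(B)\}\cup G(A)$ and hence in $G(A)$, giving one inclusion, while any $\alpha\in G(A)$ has $z\nmid\alpha$ (from $A:z=A$), and the minimal generator of $C$ dividing $\alpha$ is therefore also not divisible by $z$, giving the other. (Also, the stray ``wait —'' mid-paragraph should be excised; the reasoning around it is fine, just $A\subseteq C\subseteq C:z=B$.)
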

\begin{proof}
Assume first that the three conditions in (1) are satisfied.  Because $A$, $B$, and $C$ are all monomial ideals, their sets of monomial generators are related by 
\[
G(C) = G(A) \cup \{z\mu : z\mu \notin A, \mu \in G(B)\}.
\]

Since  $A:z =A$, the elements of $G(A)$ are not divisible by $z$. Thus, $A = (\sigma \in G(C) : z \nmid \sigma)$.  Clearly, $B \subseteq C:z$.  Fix $\gamma \in G(C:z)$.   Because $\gamma$ is a minimal generator of $C:z$, we know if $\gamma \in C$, then $z \nmid \gamma$,  and so $\gamma \in A$. On the other hand if $\gamma \notin C$, then $\gamma \notin A \subset C$ and so $z\gamma \notin A$ because $A:z = A$.  With $z\gamma \in G(C) \setminus G(A)$, we must have $z\gamma \in \{z\mu : z\mu \notin A, \mu \in G(B)\}$, and so $\gamma \in G(B)$.  Thus, $C:z \subseteq B$, as well. This establishes (2).

For the converse implication, $(2)\Rightarrow (1)$, it is clear that $C = z(C:z)+(\sigma \in G(C) : z \nmid \sigma)$, i.e., $C = zB+A$.
The identity $A:z=A$ follows since the minimal generators of $A$ are not divisible by $z$ and the containment $A\subseteq B$ follows since $A\subseteq C\subseteq B$.

The final sentences are immediate from the description of $A$ and $B$ in (2).
\end{proof}

The next lemma considers how a polarization $\P_{\bb}(I)$ of a monomial ideal $I$ might occur as a monomial basic double G-link of shift $1$. We will show that any such basic double G-link may be assumed to be taken at $x_{1,1}$. Viewed through the lens of \cite{NR08}, \cref{lem:noBadPols} says that any weak vertex decomposition of the Stanley--Reisner complex of the full polarization $\P(I)$ may be assumed to be taken at the vertex corresponding to $x_{1,1}$.

\begin{lemma}\label{lem:noBadPols}
Let $I$ be a monomial ideal of the polynomial ring $R = \kappa[x_1, \ldots, x_n]$, and fix $i \in [n]$. Set 
\[
k =  \max \{a: x_i^a \mid \mu \qforsome  \mu \in G(I)\}, 
\] and fix some $1<a \in [k]$.  Suppose that there exists $\nu \in G(I)$ so that $$x_i \mid \nu \qand x_i^a \nmid \nu.$$  
Let $\P_{\bb}(I)$ denote a partial polarization of $I$ with respect to the vector $\bb \in \ZZ_+^n$ satisfying $a \leq b_i \leq k.$  Set $$A = (\sigma \in G( \P_{\bb}(I)) : x_{i,a} \nmid \sigma) \qand B = \P_{\bb}(I):x_{i,a}.$$  Then $\P_{\bb}(I) = x_{i,a}B+A$ is not a basic double G-link.  
 \end{lemma}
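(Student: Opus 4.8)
The plan is to show that the candidate basic double G-link $\mathcal{P}_{\mathbf{b}}(I) = x_{i,a}B + A$ fails the hypothesis $A : x_{i,a} = A$ from \cref{def:basic double G-link}; in fact I will produce an explicit monomial $\tau$ with $x_{i,a}\tau \in A$ but $\tau \notin A$. The key observation is that the generator $\nu \in G(I)$ with $x_i \mid \nu$ but $x_i^a \nmid \nu$ polarizes, with respect to $\mathbf{b}$, to a monomial $\tilde\nu \in G(\mathcal{P}_{\mathbf{b}}(I))$ in which the variables $x_{i,1}, \ldots, x_{i,c}$ appear for some $1 \le c < a$ but $x_{i,a}$ does \emph{not} appear (since the $x_i$-exponent of $\nu$ is some $c$ with $1 \le c < a \le b_i$, partial polarization of $\nu$ introduces exactly $x_{i,1}\cdots x_{i,c}$ in the $x_i$-slot). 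Hence $\tilde\nu$ is a generator of $\mathcal{P}_{\mathbf{b}}(I)$ not divisible by $x_{i,a}$, so by the definition of $A$ in the statement (matching \cref{lem:formOfBDL}(2)), we have $\tilde\nu \in A$.

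Next I would exhibit a multiple of $\tilde\nu$ that lies in $A$ only after multiplication by $x_{i,a}$. Consider $\tau := (\tilde\nu / x_{i,c})\, x_{i,c+1}\cdots x_{i,a-1}$, i.e., replace the trailing $x_i$-polarization variable of $\tilde\nu$ by the block $x_{i,c+1}\cdots x_{i,a-1}$; equivalently $\tau$ is the monomial obtained from $\tilde\nu$ by shifting its top $x_i$-index up so that $x_{i,a-1}$ (rather than $x_{i,c}$) is the largest $x_i$-variable dividing it — more carefully, I want $\tau$ divisible by exactly $x_{i,1},\dots,x_{i,a-1}$ in the $x_i$-slot and otherwise agreeing with $\tilde\nu$ outside the $x_i$-variables. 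Wait — I need $\tau$ to not be in $A$ while $x_{i,a}\tau \in A$. The cleaner choice: since $x_{i,a}\tilde\nu$ need not be minimal, instead take $\tau$ to be $\tilde\nu$ itself with its $x_i$-block $x_{i,1}\cdots x_{i,c}$ enlarged to $x_{i,1}\cdots x_{i,a-1}$; then $x_{i,a}\tau$ has $x_i$-block $x_{i,1}\cdots x_{i,a}$ and is divisible by the full polarization $\tilde\nu'$ of a generator $\nu'$ with $x_i^a \mid \nu'$ (which exists because $k \ge a$ guarantees some generator has $x_i$-exponent $\ge a$, and $b_i \ge a$ means its polarization carries $x_{i,1}\cdots x_{i,a}$). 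So $x_{i,a}\tau \in \mathcal{P}_{\mathbf{b}}(I)$; and since $x_{i,a} \mid x_{i,a}\tau$ is not the obstruction — I need $x_{i,a}\tau$ divisible by a generator \emph{not} involving $x_{i,a}$ to conclude $x_{i,a}\tau \in A$. That generator is $\tilde\nu$: indeed $\tilde\nu \mid \tau \mid x_{i,a}\tau$, and $\tilde\nu \in A$, so $x_{i,a}\tau \in A$. On the other hand $\tau \notin A$: any generator of $A$ dividing $\tau$ must be a generator of $\mathcal{P}_{\mathbf{b}}(I)$ not divisible by $x_{i,a}$, hence (by the polarization structure) comes from a generator of $I$ whose $x_i$-exponent is $< a$; but then its $x_i$-block is $x_{i,1}\cdots x_{i,c'}$ with $c' < a$, which does divide $\tau$... so I must choose $\tau$ with $x_i$-block precisely $x_{i,a-1}$ alone? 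Let me reconsider: I should take $\tau = (\tilde\nu/(x_{i,1}\cdots x_{i,c}))\cdot x_{i,a-1}$, i.e., replace the whole $x_i$-block of $\tilde\nu$ by the single variable $x_{i,a-1}$. Then no generator of $\mathcal{P}_{\mathbf{b}}(I)$ avoiding $x_{i,a}$ divides $\tau$, because such a generator's $x_i$-block is an initial segment $x_{i,1}\cdots x_{i,c'}$ which cannot divide $x_{i,a-1}$ unless $c'=0$, and if $c'=0$ it already divided $\nu$... This case analysis is exactly the delicate point, and it is where I expect to spend real effort: I must track which generators of $\mathcal{P}_{\mathbf{b}}(I)$ can divide $\tau$ and verify none of them lies in $A$, while $x_{i,a}\tau$ does.

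The main obstacle, then, is not the conceptual framework — the whole point is the clean criterion $A : x_{i,a} = A$ — but the careful bookkeeping of polarization exponents: I need the precise shape of $\mathcal{P}_{\mathbf{b}}(I)$ from \cref{def:polarization}, the fact that a generator of $I$ with $x_i$-exponent $e$ polarizes (w.r.t.\ $\mathbf{b}$ with $b_i \ge a$) to a monomial whose $x_i$-part is $x_{i,1}\cdots x_{i,e}$ when $e \le b_i$, and I must choose the witness monomial $\tau$ so that (a) $x_{i,a}\tau$ is divisible by the polarization of $\nu$ (forcing it into $A$, as that polarization avoids $x_{i,a}$), while (b) $\tau$ itself is divisible by no generator of $A$. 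Once $\tau$ is correctly identified, the conclusion $A : x_{i,a} \ne A$ is immediate, hence $\mathcal{P}_{\mathbf{b}}(I) = x_{i,a}B + A$ violates \cref{def:basic double G-link} and is not a basic double G-link, which is the claim. I would also remark at the end, via \cref{lem:noBadPols}'s stated interpretation through \cite{NR08}, that this is precisely the statement that a weak vertex decomposition of the Stanley--Reisner complex of a polarization cannot be taken at a "wrong" intermediate copy $x_{i,a}$ of an original variable $x_i$.
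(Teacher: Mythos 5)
Your plan is to show that the equality $A : x_{i,a} = A$ fails, but this equality holds \emph{automatically} from the definition of $A$, so the plan cannot succeed. Indeed, $A$ is the monomial ideal generated by those minimal generators of $\P_{\mathbf{b}}(I)$ \emph{not} divisible by $x_{i,a}$; for any monomial ideal $J$ whose minimal monomial generators $g_1,\dots,g_m$ are all coprime to a variable $z$, one has $J : z = \bigl(\operatorname{lcm}(g_j,z)/z : j\bigr) = (g_1,\dots,g_m) = J$. (This is also exactly the content of \cref{lem:formOfBDL}: the conditions $C = zB + A$, $A:z = A$, $A \subseteq B$ are \emph{equivalent} to $B = C:z$ together with $A = (\sigma \in G(C) : z\nmid\sigma)$, so the decomposition in the statement of \cref{lem:noBadPols} satisfies $A:x_{i,a}=A$ by construction.) The repeated back-and-forth in your write-up --- ``so I must choose $\tau$ with\dots'', ``Let me reconsider\dots'' --- is a symptom of this: every candidate $\tau$ you try will turn out to already lie in $A$, precisely because no such $\tau$ exists.

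Since the conditions $A:x_{i,a}=A$, $A\subseteq B$ (note $A\subseteq \P_{\mathbf{b}}(I)\subseteq \P_{\mathbf{b}}(I):x_{i,a}=B$), and $\P_{\mathbf{b}}(I)=x_{i,a}B+A$ are all automatic here, the thing that must fail in \cref{def:basic double G-link} is one of: unmixedness, the Cohen--Macaulay or $G_0$ condition on $R/A$, or the height requirement $\hgt(B)=\hgt(A)+1$. The paper's proof attacks the latter two jointly: it supposes the decomposition is a basic double G-link (so in particular $R/A$ is Cohen--Macaulay, hence unmixed), uses the hypothesized $\nu$ to produce a minimal generator $\mu$ of $A$ divisible by $x_{i,1}$, and from unmixedness of $A$ extracts a minimal prime $P\supseteq A$ with $x_{i,1}\in P$ and $\hgt(P)=\hgt(A)$. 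It then shows every generator $\lambda$ of $B$ lies in $P$: either $\lambda\in \P_{\mathbf{b}}(I)$, in which case $x_{i,a}\nmid\lambda$ forces $\lambda\in A\subseteq P$; or $\lambda\notin\P_{\mathbf{b}}(I)$, in which case $x_{i,a}\lambda\in G(\P_{\mathbf{b}}(I))$ is a polarized generator involving both $x_{i,a}$ and (since $a>1$) $x_{i,1}$, so $x_{i,1}\mid\lambda$, hence $\lambda\in P$. Thus $B\subseteq P$ and $\hgt(B)\le\hgt(P)=\hgt(A)$, contradicting $\hgt(B)=\hgt(A)+1$. You should rework your argument along these lines rather than targeting $A:x_{i,a}=A$.
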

 
 \begin{proof} Suppose, for contradiction, that $\P_{\bb}(I) = x_{i,a}B+A$ is a basic double G-link.  By assumption, there is some generator $\nu$ of $G(I)$ so that $x_i \mid \nu$ but $x_i^a \nmid \nu$.  This generator $\nu$ of $I$ gives rise to a generator $\mu$ of $A$ so that $x_{i,1} \mid \mu$.  Because the quotient by $A$ is Cohen--Macaulay by the definition of basic double G-link, $A$ must have some minimal prime $P$ so that $x_{i,1} \in P$ and $\hgt(P) = \hgt(A)$.  

Fix $\lambda \in G(B)$, and note that $x_{i,a} \nmid \lambda$.  Thus, if $\lambda \in \P_{\bb}(I)$, then $\lambda \in A \subseteq P$.  If $\lambda \notin \P_{\bb}(I)$, then $x_{i,a}\lambda$ is a minimal generator of $\P_{\bb}(I)$, and so $x_{i,1} \mid \lambda$.  Hence $\lambda \in P$.  Because $\lambda$ was an arbitrary generator of $B$, $B \subseteq P$, and so $\hgt(B) \leq \hgt(P) = \hgt(A)$.  But this contradicts the definition of basic double G-link, which would require $\hgt(B) = \hgt(A)+1$.
 \end{proof}

Note that the condition $x_i \mid \nu$ but $x_i^a \nmid \nu$ in \cref{lem:noBadPols} exists merely to rule out trivialities of relabeling.  For example, if $I = (x_1^2, x_2)$, then $\P(I) = x_{1,2}(x_{1,1}, x_{2})+(x_2)$ is a basic double G-link even though $2>1$.  When $x_i^a$ divides every element of $G(I)$ divisible by $x_i$, then $x_{i,1}$ and $x_{i,a}$ are not meaningfully distinct. 

 Though we do not study this notion in our paper, there are cohomological properties that can be deduced from an equation of the form $C = xB+A$ where the quotient by $A$ is Cohen--Macaulay but not necessarily $G_0$ and $B$ is not necessarily unmixed.  Under these conditions, one says that $C$ is a basic double link (rather than basic double G-link) of $B$ on $A$.  \cref{lem:noBadPols} would still hold (by the proof given) if we replaced \say{is not a basic double G-link} with \say{is not a basic double link} in the conclusion.

We introduce a criterion which allows us to lift a monomial basic double G-link from a polarization $\P_{\bb}(I)$ of a monomial ideal $I$ to $I$ itself.

In order to do so, we first give a small lemma.

\begin{lemma}\label{lem:polarization-in-disguise}
Suppose that $B$ is a monomial ideal of the polynomial ring $R = \kappa[x_1, \ldots, x_n]$, and fix $\bb \in \ZZ_+^n$. Set $\B = \P_{\bb}(x_iB):x_{i,1}$.  Then $\B$ is obtained from $B$ by partial polarization and, possibly, relabeling of the variables.
\end{lemma}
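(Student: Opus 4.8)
The plan is to unwind the two polarization operations on the generating sets and exhibit $\mathcal{B}$ explicitly as a partial polarization of $B$. First I would fix notation: write each minimal generator of $B$ as $\mu = \prod_{j=1}^n x_j^{a_j}$, so that a typical minimal generator of $x_iB$ has the form $x_i \mu = x_i^{a_i+1}\prod_{j \ne i} x_j^{a_j}$ (after discarding those multiples of $x_i$ that are not minimal, which only shrinks the generating set and does not affect the argument since all ideals here are monomial). Applying $\mathcal{P}_{\mathbf{b}}$ replaces the power $x_i^{a_i+1}$ by $x_{i,b_i}^{\max\{0,\,a_i+1-b_i\}}\prod_{\ell=1}^{\min\{a_i+1,\,b_i\}} x_{i,\ell}$ and likewise polarizes each $x_j^{a_j}$ for $j\ne i$ according to $b_j$. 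Since $b_i \ge 1$, every such polarized generator is divisible by $x_{i,1}$, so forming the colon ideal $\mathcal{P}_{\mathbf{b}}(x_iB):x_{i,1}$ simply deletes one factor of $x_{i,1}$ from each generator (again up to removing non-minimal generators).

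The key computation is then to check that deleting the $x_{i,1}$ factor turns the polarized version of $x_i\mu$ into the polarized version of $\mu$ with respect to a suitably shifted vector. Concretely, I would set $b_i' = b_i - 1$ (or, if $b_i = 1$, handle that case separately — there the $x_i$-part of $x_i\mu$ polarizes to just $x_{i,1}$ and the colon removes it entirely, which matches polarizing $\mu$ with respect to $b_i' = 0$, i.e., not polarizing the $x_i$ variable at all, consistent with the convention that a component-$0$ exponent vector leaves that variable untouched), and $b_j' = b_j$ for $j \ne i$. Then a direct check of the exponent bookkeeping — comparing $\min\{a_i+1, b_i\}$ against $\min\{a_i, b_i'\}+1$ and $\max\{0, a_i+1-b_i\}$ against $\max\{0, a_i-b_i'\}$, which agree because $b_i' = b_i-1$ — shows that stripping one $x_{i,1}$ from $\mathcal{P}_{\mathbf{b}}(x_i\mu)$ yields exactly $\mathcal{P}_{\mathbf{b}'}(\mu)$ after relabeling $x_{i,\ell+1} \mapsto x_{i,\ell}$ for the $x_i$-variables (the first variable slot having been consumed). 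Running this over all $\mu \in G(B)$ and noting that minimality of generators is preserved under these bijective operations gives $\mathcal{B} = \mathcal{P}_{\mathbf{b}'}(B)$ up to relabeling.

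The main obstacle is purely organizational rather than conceptual: it is the careful case analysis around the boundary values of $b_i$ (namely $b_i = 1$ versus $b_i \ge 2$) and around whether $a_i + 1$ exceeds $b_i$, together with the bookkeeping needed to confirm that no spurious non-minimal generators are introduced when passing to $x_iB$ and that the colon operation really does act as "delete one $x_{i,1}$" on the minimal generating set. I expect to dispatch this by working one generator at a time and invoking the standard fact (used throughout this section) that for monomial ideals both $\mathcal{P}_{\mathbf{b}}(-)$ and $(-):z$ are controlled at the level of monomial generating sets, so it suffices to match generating sets. The relabeling of variables is exactly the kind of harmless reindexing already anticipated in the statement, so no further subtlety arises there.
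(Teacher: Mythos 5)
Your approach is essentially the paper's: for $b_i > 1$ you shift to $\mathbf{b}' = \mathbf{b} - \mathbf{e}_i$ and relabel the $x_{i,\ell}$ indices by one, which is exactly how the paper proceeds (the paper just phrases it as the factorization $\mathcal{P}_{\mathbf{b}}(x_iB) = x_{i,1}\tilde{B}$ at the ideal level rather than generator by generator). One small correction in your $b_i = 1$ parenthetical: the $x_i$-part of $x_i\mu$ polarizes to $x_{i,1}^{a_i+1}$, not ``just $x_{i,1}$,'' so the colon strips a single factor of $x_{i,1}$ rather than removing the $x_{i,1}$-power entirely; the clean conclusion there (as in the paper) is simply $\mathcal{B} = \mathcal{P}_{\mathbf{b}}(B)$ with the original vector $\mathbf{b}$, with no need to invoke a $b_i' = 0$ convention outside the definition's domain $\mathbb{Z}_+^n$.
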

\begin{proof}
If $b_i = 1$, then $\P_{\bb}(x_iB) = x_{i,1}\P_{\bb}(B)$, and so $\P_{\bb}(B) = \B$.  If $b_i>1$, set $\tilde{\bb} = \bb-\be_i \in \ZZ_+^n$ (where $\be_i$ denotes the $i^{th}$ standard basis vector).  Let $\tilde{B}$ be the ideal obtained from $\P_{\tilde{\bb}}(B)$ by the substitutions $x_{i,j} \mapsto x_{i,j+1}$ for all $j$.  Then $\P_{\bb}(x_iB) = x_{i,1}\tilde{B}$, and so $\B = \tilde{B}$.
\end{proof}

 \begin{proposition}\label{prop:liftPol}
 Let $I$ be a monomial ideal of the polynomial ring $R = \kappa[x_1, \ldots, x_n]$, and let $\P_{\bb}(I)$ be its polarization with respect to the vector $\bb \in \ZZ_+^n$.  
 If for some $r,s \in [n]$,  $x_{r,s}$ determines a monomial basic double G-link 
 \[
 \P_{\bb}(I) = x_{r,s}\B + \A,
 \]
 then $x_r$ determines a decomposition of $I$ of the form
 $$ I = x_rB+A$$
 in which $\A=\P_{\bb}(A)$ and $\B = \P_{\bb}(x_rB):x_{r,s}$. If, moreover, the quotient by $A$ is $G_0$, then $I$ is a monomial basic double G-link of $B$ on $A$.
\end{proposition}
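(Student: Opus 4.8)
The plan is to reduce everything to the structural characterization of basic double G-links on monomial ideals given in \cref{lem:formOfBDL}, together with the height/Cohen--Macaulayness/associated-prime dictionary for polarization recorded after \cref{def:polarization}. First I would set $B = I:x_r$ and $A = (\sigma \in G(I) : x_r \nmid \sigma)$, so that by construction $I = x_rB + A$, $A:x_r = A$, and $A \subseteq B$; this is the only candidate decomposition, by the uniqueness clause of \cref{lem:formOfBDL}. The content to verify is then twofold: (a) that the polarized data $\mathcal{A}, \mathcal{B}$ appearing in the hypothesis are exactly the polarizations of $A$ and (a shift of) $B$, i.e. $\mathcal{A} = \mathcal{P}_{\mathbf{b}}(A)$ and $\mathcal{B} = \mathcal{P}_{\mathbf{b}}(x_rB):x_{r,s}$; and (b) that the three defining conditions of a basic double G-link for $I = x_rB + A$ follow from the corresponding conditions for $\mathcal{P}_{\mathbf{b}}(I) = x_{r,s}\mathcal{B} + \mathcal{A}$.

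For (a): applying \cref{lem:formOfBDL} to the monomial basic double G-link $\mathcal{P}_{\mathbf{b}}(I) = x_{r,s}\mathcal{B} + \mathcal{A}$ gives $\mathcal{A} = (\sigma \in G(\mathcal{P}_{\mathbf{b}}(I)) : x_{r,s} \nmid \sigma)$ and $\mathcal{B} = \mathcal{P}_{\mathbf{b}}(I):x_{r,s}$. I would first observe that \cref{lem:noBadPols} forces $s$ to be $1$, or else $s$ is not "meaningfully distinct" from $1$ in the sense discussed after that lemma (precisely: either $s=1$, or $x_r^s$ divides every generator of $I$ divisible by $x_r$, in which case one may relabel so that effectively $s=1$); this is the step that lets us align the polarization variable $x_{r,s}$ with the "first sheet" variable that polarization of $x_r B$ splits off. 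Granting $s = 1$ (up to relabeling), a direct bookkeeping with \cref{def:polarization} shows that a minimal generator $\mu$ of $\mathcal{P}_{\mathbf{b}}(I)$ fails to be divisible by $x_{r,1}$ exactly when it comes from a generator of $I$ not divisible by $x_r$, giving $\mathcal{A} = \mathcal{P}_{\mathbf{b}}(A)$; and $\mathcal{P}_{\mathbf{b}}(I):x_{r,1} = \mathcal{P}_{\mathbf{b}}(x_rB + A):x_{r,1} = \mathcal{P}_{\mathbf{b}}(x_rB):x_{r,1}$ (since the generators of $\mathcal{P}_{\mathbf{b}}(A)$ are not divisible by $x_{r,1}$ and $A \subseteq x_rB$ at the level of ideals after polarizing), which is $\mathcal{B}$; \cref{lem:polarization-in-disguise} identifies this last ideal as a partial polarization of $B$ up to relabeling, so in particular $\hgt(\mathcal{B}) = \hgt(B)$ and $S/\mathcal{B}$ is Cohen--Macaulay iff $R/B$ is, etc.

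For (b): the hypothesis that $\mathcal{P}_{\mathbf{b}}(I) = x_{r,s}\mathcal{B}+\mathcal{A}$ is a basic double G-link gives $S/\mathcal{A}$ Cohen--Macaulay and $G_0$, $\hgt(\mathcal{A}) = \hgt(\mathcal{B})-1$, and $\mathcal{A}:x_{r,s} = \mathcal{A}$. Using $\mathcal{A} = \mathcal{P}_{\mathbf{b}}(A)$ and the polarization dictionary: $S/\mathcal{P}_{\mathbf{b}}(A)$ Cohen--Macaulay implies $R/A$ Cohen--Macaulay; $\hgt(A) = \hgt(\mathcal{P}_{\mathbf{b}}(A)) = \hgt(\mathcal{A}) = \hgt(\mathcal{B})-1 = \hgt(B)-1$; and $A:x_r = A$ we already have from the construction. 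The $G_0$ condition on $R/A$ is the one genuinely extra hypothesis in the statement ("If, moreover, the quotient by $A$ is $G_0$"), so it need not be derived — it is assumed — and with it in hand all three bullets of \cref{def:basic double G-link} hold for $I = x_rB + A$, with $A \subsetneq B$ proper since $\mathcal{A} \subsetneq \mathcal{B}$ are proper. I expect the main obstacle to be the careful relabeling argument in part (a): pinning down exactly why the basic-double-G-link hypothesis (via \cref{lem:noBadPols}) allows us to assume $x_{r,s}$ is the first-sheet variable $x_{r,1}$, so that the combinatorics of $\mathcal{P}_{\mathbf{b}}$ on $x_rB$ and on $A$ line up with the given splitting $\mathcal{P}_{\mathbf{b}}(I) = x_{r,s}\mathcal{B} + \mathcal{A}$; the rest is an application of the dictionary and of \cref{lem:formOfBDL,lem:polarization-in-disguise}.
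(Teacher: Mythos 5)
Your proposal is correct and follows essentially the same route as the paper: reduce to the case $s=1$ via \cref{lem:noBadPols}, set $B = I:x_r$ and $A = (\sigma \in G(I) : x_r \nmid \sigma)$ so that $I = x_r B + A$ by \cref{lem:formOfBDL}, polarize to identify $\mathcal{A} = \P_{\mathbf{b}}(A)$ and $\mathcal{B} = \P_{\mathbf{b}}(x_r B):x_{r,1}$ via the uniqueness in \cref{lem:formOfBDL} and \cref{lem:polarization-in-disguise}, transfer Cohen--Macaulayness and heights through the polarization dictionary, and invoke the additional $G_0$ hypothesis. One small caution: the parenthetical justification ``$A \subseteq x_rB$ at the level of ideals after polarizing'' is imprecise as stated (the relevant containment is $\P_{\mathbf{b}}(A) \subseteq \P_{\mathbf{b}}(x_rB):x_{r,1}$, which holds because $\P_{\mathbf{b}}(A) \subseteq \P_{\mathbf{b}}(B)$ and no generator of $\P_{\mathbf{b}}(A)$ involves any $x_{r,j}$, so the relabeling in \cref{lem:polarization-in-disguise} is invisible to them), but your overall argument reaches the right conclusion and matches the intended proof.
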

 
 \begin{proof}
 We assume, without loss of generality, that $r=1$. By \cref{lem:noBadPols}, we may also assume $s=1$.
 
 Set $$A = I:x_1 \qand B=(\sigma \in G(I) : x_1 \nmid \sigma).$$ 
 By \cref{lem:formOfBDL}, $$I = x_{1}B+A, \quad A\subset B, \qand A = A:x_1.$$
Now polarization gives 
\[ \P_{\bb}(I)= \P_{\bb}(x_1 B)+\P_{\bb}(A)= x_{1,1}\P_{\bb}(B)'+\P_{\bb}(A),\]
where $\P_{\bb}(B)' = \P_{\bb}(x_1B):x_{1,1}$. Since $A\subset B$, we have that $\P_{\bb}(A)\subset \P_{\bb}(B)$, and, since the generators of $\P_{\bb}(A)$ do not involve the variables $x_{1,i}$, it is also the case that $\P_{\bb}(A)\subset \P_{\bb}(B)'$.  By \cref{lem:formOfBDL} $\B=\P_{\bb}(B)'$ and $\A=\P_{\bb}(A)$.  
 
By \cref{lem:polarization-in-disguise}, $\B =\P_{\bb}(x_1B):x_{1,1}$ is, up to possibly relabeling the variables, a polarization of $B$. Since polarization preserves height, unmixedness, and the Cohen--Macaulay property, we deduce that $\hgt(A)=\hgt(\A)$ and $\hgt(B)=\hgt(\B)$.  The hypothesis grants that the quotient by $\A$ is Cohen--Macaulay and that $\hgt(\B)=\hgt(\A)+1$; therefore, the quotient by $A$ is Cohen--Macaulay and $\hgt(B)=\hgt(A)+1$. Hence, $I$ is a basic double G-link of $B$ on $A$ whenever the quotient by $A$ is $G_0$.
 \end{proof}

 \begin{example}\label{ex:monomialPol}
Consider the ideals $I = (x,y)^2 = (x^2,xy,y^2)$ and \[
J = (x,y,z)^2 = (x^2, xy, xz, y^2, yz, z^2)\subset R=\kappa[x,y,z]
\] and their polarizations $\P(I) = (x_1x_2,x_1y_1,y_1y_2)$ and $\P(J) = (x_1x_2, x_1y_1, x_1z_1, y_1y_2, yz, z_1z_2)$ in the polynomial ring $S=\kappa[x_1,x_2,y_1,y_2,z_1,z_2]$. 
In $S$, consider the basic double G-link
\[
\P(I) = x_1(x_2,y_1)+(y_1y_2).
\]
Then $\P((y^2))=(y_1y_2)$ and $\P((x,y))=(x_1,y_1)$, from which $(x_2,y_1)$ is obtained by the substitution $x_1 \mapsto x_2$.
In $R$, because the quotient by $(y_1^2)$ is $G_0$, one has the corresponding basic double G-link
\[
I = x(x,y)+(y^2).
\]
By contrast, the basic double G-link 
\[
\P(J) = x_1(x_2,y_1, z_1)+(y_1y_2, y_1z_1, z_1z_2) 
\] gives rise to the equation \[
J = x(x,y,z)+(y^2,yz,z^2),
\] but this equation does not constitute a basic double G-link because $(y^2,yz,z^2)$ does not define a $G_0$ quotient.

In this case, the failure of $(y^2,yz,z^2)$ to be $G_0$ can be addressed by Migliore and Nagel's lifting construction \cite{MN00}, which shows that $J$ and $(x,y,z)$ are connected by basic double $G$-link. The purpose of this example is to show that a basic double G-link involving a polarization of an ideal may not directly induce a basic double G-link involving the original ideal.
\end{example}

We now work to give a converse to \cref{prop:liftPol}.  We begin by considering the $G_0$ condition.

\begin{lemma}\label{lem:G0pol}
    Suppose that $A$ is a monomial ideal of $R$ and that $R/A$ is equidimensional and $G_0$ and that $\P_{\bb}(A)$ is the partial polarization of $A$ with respect to the vector $\bb \in \ZZ_+^n$.  Let $S$ denote the ambient polynomial ring of $\P_{\bb}(A)$ as described in \cref{def:polarization}.  Then $S/\P_{\bb}(A)$ is also $G_0$.
\end{lemma}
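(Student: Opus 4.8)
The plan is to reduce to the full polarization and then use an explicit description of the minimal primes of a squarefree monomial ideal in terms of its Stanley--Reisner complex. First I would recall that any partial polarization $\mathcal{P}_{\mathbf{b}}(A)$ is itself obtained from the full polarization $\mathcal{P}(A)$ by a further regular sequence of differences of variables (equivalently, $\mathcal{P}(A) = \mathcal{P}_{\mathbf{c}}(\mathcal{P}_{\mathbf{b}}(A))$ for an appropriate $\mathbf{c}$), so it suffices to prove the statement for the full polarization $\mathcal{P}(A)$; the general case follows since each such regular-sequence quotient preserves the $G_0$ property by the same argument applied one variable at a time, or simply by symmetry of the roles of $R$ and $S$. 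So assume $\mathcal{P}_{\mathbf{b}}(A) = \mathcal{P}(A)$.

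Next I would use the combinatorial characterization recorded in \cref{sec:background}: a squarefree monomial ideal $I_\Delta$ has $R/I_\Delta$ equidimensional (i.e. $\Delta$ pure) and $G_0$ precisely when, in addition, the localization at each minimal prime is Gorenstein. The key point is that for a squarefree monomial ideal the minimal primes are generated by variables, and localizing $S/\mathcal{P}(A)$ at such a prime $Q = (x_{i_1,j_1}, \ldots, x_{i_r,j_r})$ gives, after inverting the other variables, a ring whose being Gorenstein depends only on the ``link'' portion of the complex — and this is governed by a combinatorial invariant (the link of the corresponding face being a homology sphere, or more elementarily, the quotient being a complete intersection of the relevant localized ideal). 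The crucial structural fact is the last sentence of the polarization subsection: the associated primes of $A$ and of $\mathcal{P}(A)$ are matched by $(x_{i_1}, \ldots, x_{i_r}) \leftrightarrow (x_{i_1,j_1},\ldots,x_{i_r,j_r})$, and moreover, because $f_1, \ldots, f_k$ of the form $x_i - x_{i,j}$ form a regular sequence with $R/A \cong S/(\mathcal{P}(A) + (f_1,\ldots,f_k))$, one can localize compatibly: for a minimal prime $\mathfrak{q}$ of $\mathcal{P}(A)$ lying over the minimal prime $\mathfrak{p}$ of $A$, the images of the $f_\ell$ remain a regular sequence in $(S/\mathcal{P}(A))_{\mathfrak{q}}$, and $(S/\mathcal{P}(A))_{\mathfrak{q}} / (f_1,\ldots,f_k) \cong (R/A)_{\mathfrak{p}}$. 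Since quotienting a local ring by a regular sequence preserves and reflects the Gorenstein property, $(S/\mathcal{P}(A))_{\mathfrak{q}}$ is Gorenstein iff $(R/A)_{\mathfrak{p}}$ is, which holds by hypothesis.

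The one genuine subtlety — which I expect to be the main obstacle — is verifying that the images of $f_1, \ldots, f_k$ actually remain a regular sequence after localizing $S/\mathcal{P}(A)$ at the minimal prime $\mathfrak{q}$, rather than merely being a regular sequence in the global ring. This requires knowing that $\mathfrak{q}$, which involves only variables $x_{i,j}$, does not contain any $f_\ell = x_i - x_{i,j}$, i.e. that no whole ``column'' of variables $\{x_{i,1}, \ldots, x_{i,e_i}\}$ is swallowed by $\mathfrak{q}$ together with the pairing forced by $f_\ell$; equivalently, one must check that $\mathfrak{q} + (f_1, \ldots, f_k)$ is still a proper ideal contracting to $\mathfrak{p}$, and that the regular sequence property localizes — which follows from the fact that a regular sequence on a module stays regular after localizing at any prime containing it, combined with the height count $\hgt(\mathcal{P}(A)) = \hgt(A)$ that guarantees $\mathfrak{q}$ has the right height and is disjoint from the ``diagonal'' subvariety cut out by the $f_\ell$ in the appropriate sense. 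Once this localization-of-regular-sequence bookkeeping is in place, equidimensionality of $S/\mathcal{P}(A)$ is immediate (polarization preserves purity, as already cited), and the $G_0$ conclusion follows prime-by-prime.
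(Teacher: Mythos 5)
Your proposal has a genuine gap in its central step. You want to localize $S/\mathcal{P}(A)$ at a minimal prime $\mathfrak{q}$, kill the regular sequence $f_1,\ldots,f_k$ (where $f_\ell = x_{i,1}-x_{i,j}$), and recover $(R/A)_{\mathfrak{p}}$. But a minimal prime of a squarefree monomial ideal is generated by variables, say $\mathfrak{q}=(x_{1,j_1},\ldots,x_{h,j_h})$, and a difference of two distinct variables $x_{i,1}-x_{i,j}$ lies in no such prime. So each $f_\ell$ becomes a unit in $S_{\mathfrak{q}}$, and the quotient $(S/\mathcal{P}(A))_{\mathfrak{q}}/(f_1,\ldots,f_k)$ is zero, not $(R/A)_{\mathfrak{p}}$. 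The ``lying over'' picture you invoke doesn't apply, because the prime of $S$ corresponding to $\mathfrak{p}$ under the isomorphism $R/A\cong S/(\mathcal{P}(A)+(f_1,\ldots,f_k))$ is the much larger prime $(x_{i,j} : i\le h,\ \mathrm{all}\ j)+(f_1,\ldots,f_k)$, not $\mathfrak{q}$. In fact, for the full polarization $\mathcal{P}(A)$, every $(S/\mathcal{P}(A))_{\mathfrak{q}}$ is already a field (since $\mathcal{P}(A)$ is radical), so there is nothing to quotient.

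Your reduction step has a second problem: you claim the $G_0$ property descends along regular-sequence quotients, so that $G_0$ for $\mathcal{P}(A)$ implies $G_0$ for $\mathcal{P}_{\mathbf{b}}(A)$. This is false in general, and the lemma itself supplies the counterexample to be wary of: the full polarization $(y_1y_2, y_1z_1, z_1z_2)$ is squarefree and hence trivially $G_0$, yet cutting down by the regular sequence $y_1-y_2,\ z_1-z_2$ yields the ideal $(y^2,yz,z^2)$, which is precisely the paper's \cref{ex:monomialPol} of a quotient that is not $G_0$. If $G_0$ descended this way, the lemma would be vacuous. The paper's proof takes a different route that sidesteps both issues: fix $P$, identify the corresponding minimal prime $Q$ of $A$ using \cite{Har66,Far06}, use the $G_0$ hypothesis on $(R/A)_Q$ together with Beintema's theorem (an artinian Gorenstein monomial ideal is a complete intersection, i.e.\ its generators have pairwise disjoint supports), and then check directly that disjointness of supports is inherited by the generators of $\mathcal{P}_{\mathbf{b}}(A)S_P$.
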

\begin{proof}   
Fix a minimal prime $P = (x_{1,j_1}, \ldots, x_{h,j_h})$ of $\P_{\bb}(A)$.  Note that $A$ and $\P_{\bb}(A)$ share a (full) polarization, which we will call $J$.  Using \cite[Proposition 4.4]{Har66} or \cite[Corollary 2.6]{Far06}, $Q = (x_1, \ldots, x_h)$ is an associated prime of $A$.  Because $A$ and $\P_{\bb}(A)$ are of the same height by \cite[Proposition 4.4]{Har66} or \cite[Proposition 2.3]{Far06} and $R/A$ is equidimensional, $Q$ is a minimal prime of $A$.  

Hence, by the assumption that $R/A$ is $G_0$, $(R/A)_Q$ is Gorenstein.  That is, $AR_Q$ is a Gorenstein artinian monomial ideal.  But then $A$ must be a complete intersection monomial ideal \cite[Proposition on Page 2]{Bei93}, which is to say that the generators of $AR_Q$ have pairwise disjoint support.  So too, then, do the generators of $\P_{\bb}(A)S_P$; if any two of the minimal generators of $\P_{\bb}(A)$ were divisible by some $x_{i,j_i}$, then the corresponding generators of $AR_Q$ would both be divisible by $x_i$.  Thus, $\P_{\bb}(A)S_P$ is a complete intersection monomial ideal.  In particular, $(S/\P_{\bb}(A))_P$ is Gorenstein, as desired.
\end{proof}

\begin{proposition}\label{prop:BDLgivesBDLofPolarization}
Let $I$ be a monomial ideal of the polynomial ring $R = \kappa[x_1, \ldots, x_n]$.  Suppose that $I = x_iB+A$ is a basic double G-link of $B$ on $A$, and fix $\bb \in \ZZ_+^n$.  Set $\B = \P_{\bb}(x_iB):x_{i,1}$.  Then $\P_{\bb}(I) = x_{i,1}\B+\P_{\bb}(A)$ is a basic double G-link of $\B$ on $\P_{\bb}(A)$.
\end{proposition}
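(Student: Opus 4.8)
The strategy is to verify the three defining conditions of a basic double G-link for the equation $\P_{\mathbf{b}}(I) = x_{i,1}\mathcal{B}+\P_{\mathbf{b}}(A)$, using the given basic double G-link $I = x_iB+A$ as input and transporting each condition across polarization. First I would record, via \cref{lem:formOfBDL} applied to $I = x_iB+A$, that $A = (\sigma \in G(I) : x_i \nmid \sigma)$, $B = I:x_i$, and $A:x_i = A$. Then, exactly as in the proof of \cref{prop:liftPol}, polarization is additive on this sum of monomial ideals and peels off one factor of $x_{i,1}$, so $\P_{\mathbf{b}}(I) = \P_{\mathbf{b}}(x_iB) + \P_{\mathbf{b}}(A) = x_{i,1}\,(\P_{\mathbf{b}}(x_iB):x_{i,1}) + \P_{\mathbf{b}}(A) = x_{i,1}\mathcal{B} + \P_{\mathbf{b}}(A)$; since the generators of $\P_{\mathbf{b}}(A)$ involve none of the variables $x_{i,j}$, we get $\P_{\mathbf{b}}(A):x_{i,1} = \P_{\mathbf{b}}(A)$ and $\P_{\mathbf{b}}(A) \subseteq \mathcal{B}$ (the latter because $\P_{\mathbf{b}}(A)\subseteq\P_{\mathbf{b}}(x_iB)\subseteq\mathcal{B}$, using $A\subseteq B$). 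This already gives the structural shape and the colon condition $\mathcal{A}:f = \mathcal{A}$ with $f = x_{i,1}$ of degree $1$.

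Next comes the height condition $\hgt(\mathcal{B}) = \hgt(\P_{\mathbf{b}}(A)) + 1$. Polarization preserves height, so $\hgt(\P_{\mathbf{b}}(A)) = \hgt(A)$. For $\mathcal{B}$, I would invoke \cref{lem:polarization-in-disguise}, which says $\mathcal{B} = \P_{\mathbf{b}}(x_iB):x_{i,1}$ is itself a partial polarization of $B$ (up to relabeling of variables), hence $\hgt(\mathcal{B}) = \hgt(B)$. Since $I = x_iB+A$ is a basic double G-link we have $\hgt(B) = \hgt(A)+1$, and the desired equality follows.

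The remaining, and genuinely content-bearing, conditions are that $S/\P_{\mathbf{b}}(A)$ is Cohen--Macaulay and $G_0$. The Cohen--Macaulay property transfers for free: $R/A$ is Cohen--Macaulay by hypothesis, and polarization preserves the Cohen--Macaulay property (as recorded in \cref{sec:background}, following \cite{Fr}). The $G_0$ property is where I expect the main obstacle to lie, and it is precisely what \cref{lem:G0pol} is built to handle: a Cohen--Macaulay ideal is unmixed, so $R/A$ is equidimensional, and \cref{lem:G0pol} then yields that $S/\P_{\mathbf{b}}(A)$ is $G_0$. Assembling the four verified conditions — $\P_{\mathbf{b}}(I) = x_{i,1}\mathcal{B}+\P_{\mathbf{b}}(A)$ with $\P_{\mathbf{b}}(A)\subseteq\mathcal{B}$ proper, $S/\P_{\mathbf{b}}(A)$ Cohen--Macaulay and $G_0$, $\hgt(\P_{\mathbf{b}}(A)) = \hgt(\mathcal{B})-1$, and $\P_{\mathbf{b}}(A):x_{i,1} = \P_{\mathbf{b}}(A)$ — gives that $\P_{\mathbf{b}}(I) = x_{i,1}\mathcal{B}+\P_{\mathbf{b}}(A)$ is a basic double G-link of $\mathcal{B}$ on $\P_{\mathbf{b}}(A)$, as claimed. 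The only subtlety to watch is properness of the ideals and that $\mathcal{B}, \P_{\mathbf{b}}(A)$ are genuinely the monomial ideals $B'$ and $A'$ distinguished by \cref{lem:formOfBDL}, so that the decomposition is the canonical one; this follows because the generators of $\P_{\mathbf{b}}(A)$ avoid $x_{i,1}$ and $\mathcal{B} = \P_{\mathbf{b}}(I):x_{i,1}$ after the peeling, mirroring the corresponding step in \cref{prop:liftPol}.
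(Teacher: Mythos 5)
Your overall structure matches the paper's: verify the conditions in the definition of a basic double G-link for $\P_{\mathbf{b}}(I) = x_{i,1}\mathcal{B}+\P_{\mathbf{b}}(A)$, with heights, Cohen--Macaulayness, $G_0$, and the colon condition all transported via polarization, \cref{lem:polarization-in-disguise}, and \cref{lem:G0pol}. However, your argument for the containment $\P_{\mathbf{b}}(A)\subseteq\mathcal{B}$ has a genuine gap. You claim $\P_{\mathbf{b}}(A)\subseteq\P_{\mathbf{b}}(x_iB)$, citing $A\subseteq B$; but $A\subseteq B$ does not give $A\subseteq x_iB$, and the containment $\P_{\mathbf{b}}(A)\subseteq\P_{\mathbf{b}}(x_iB)$ is false in general. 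In fact every generator of $\P_{\mathbf{b}}(x_iB)$ is divisible by $x_{i,1}$ while, by the condition $A:x_i = A$, no generator of $\P_{\mathbf{b}}(A)$ is, so the containment can only hold when $A = 0$. Concretely, take $B=(x_i,x_j)$, $A=(x_j)$, and $\mathbf{b}$ with $b_i=2$: then $\P_{\mathbf{b}}(A)=(x_{j,1})$, $\P_{\mathbf{b}}(x_iB)=(x_{i,1}x_{i,2},x_{i,1}x_{j,1})$, and $x_{j,1}\notin\P_{\mathbf{b}}(x_iB)$, even though $\P_{\mathbf{b}}(A)\subseteq\mathcal{B}=(x_{i,2},x_{j,1})$ does hold.

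The containment must be shown directly against $\mathcal{B}=\P_{\mathbf{b}}(x_iB):x_{i,1}$, not routed through $\P_{\mathbf{b}}(x_iB)$. Since $A\subseteq B$ and no generator of $A$ is divisible by $x_i$, each generator $a$ of $A$ is divisible by some generator $b$ of $B$ with $x_i\nmid b$ (if $x_i\mid b$ and $b\mid a$, then $x_i\mid a$, a contradiction). For such a $b$ one has $\P_{\mathbf{b}}(x_ib)=x_{i,1}\P_{\mathbf{b}}(b)$, so $\P_{\mathbf{b}}(b)\in\mathcal{B}$, and $\P_{\mathbf{b}}(b)$ involves none of the $x_{i,j}$. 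Since $b\mid a$ implies $\P_{\mathbf{b}}(b)\mid\P_{\mathbf{b}}(a)$, it follows that $\P_{\mathbf{b}}(a)\in\mathcal{B}$ and hence $\P_{\mathbf{b}}(A)\subseteq\mathcal{B}$. This is the argument the paper gives; the rest of your proof is sound, though note also that your parenthetical claim $\mathcal{B}=\P_{\mathbf{b}}(I):x_{i,1}$ itself presupposes $\P_{\mathbf{b}}(A)\subseteq\mathcal{B}$ and so cannot be used to establish it.
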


\begin{proof}
From $I = x_iB+A$, we have the equation $\P_{\bb}(I) = \P_{\bb}(x_iB)+\P_{\bb}(A) = x_{i,1}\B+\P_{\bb}(A)$.

All unmixedness, Cohen--Macaulayness, and height properties are preserved under polarization.  By \cref{lem:polarization-in-disguise}, $\B$ is, up to relabeling of the variables, a polarization of $B$, and $\P_{\bb}(A)$ is by definition a polarization of $A$. 
 By \cref{lem:G0pol}, the quotient by $\P_{\bb}(A)$ is $G_0$.

 It remains only to show that $\P_{\bb}(A) \subset \B$. 
 Because no generator of $A$ is divisible by $x_i$ and $A \subset B$, each generator of $A$ is divisible by some element of $B$ that is not divisible by $x_i$.  Thus, every element of $\P_{\bb}(A)$ is divisible by some element of $\P_{\bb}(B)$ that is not divisible by $x_{i,j}$ for any $j$ and, hence, is also an element of $\B$.  That is, $\P_{\bb}(A) \subset \B$.
\end{proof}

\begin{notation}\label{not:depolarization} Given a monomial ideal $I$ of the polynomial ring $R = \kappa[x_1, \ldots, x_n]$, its partial polarization $\P_{\bb}(I)$ with respect to the polarization vector $\bb \in \ZZ_+^n$, and an ideal $A \subseteq \P_{\bb}(I)$, we will call the ideal \[
(A+(x_i - x_{i,j} \colon i \in [n], j \in [b_i])) \cap R
\]the \textbf{depolarization} of $A$.  Note  that the depolarization of $\P_{\bb}(I)$ is $I$ and that the depolarization of $A$ is contained in $I$.
\end{notation}

Given monomial ideals $A$, $B$, and $C$ and a monomial basic double G-link $C = xB+A$ of shift $1$, call $A$ the \textbf{ideal of the deletion}.  The terminology is justified by \Cref{cor:summaryMonomialCase} and the reference \cite[Remark 2.4]{NR08} cited in its proof.  

\begin{theorem}\label{cor:summaryMonomialCase}
Let $I$ be a monomial ideal in a polynomial ring, $\P_{\bb}(I)$ its polarization with respect to the vector $\bb$, and $\P(I)$ its full polarization. The following are equivalent:
\begin{enumerate}
\item $I$ is glicci via a sequence monomial basic double G-links of shift $1$;
\item $\P_{\bb}(I)$ is glicci via a sequence of monomial basic double G-links of shift $1$, and the depolarizations of the ideals of the deletions define $G_0$ quotients;
\item $\P(I)$ is the Stanley-Reisner ideal of a weakly vertex decomposable simplicial complex, and the depolarizations of the Stanley--Reisner ideals of the deletions appearing in each weak vertex decomposition define $G_0$ quotients.
\end{enumerate}
\end{theorem}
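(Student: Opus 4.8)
The plan is to iterate \cref{prop:liftPol} and \cref{prop:BDLgivesBDLofPolarization}, which are already essentially the single-step versions of the equivalences $(1)\Leftrightarrow(2)$, and then to invoke the Nagel--R\"omer dictionary \cite[Theorem 3.3]{NR08} to fold $(2)$ into $(3)$. The key observation making the induction go through is supplied by \cref{lem:polarization-in-disguise}: whenever we perform a monomial basic double G-link $\mathcal{P}_{\mathbf{b}}(I) = x_{r,s}\mathcal{B} + \mathcal{A}$, both $\mathcal{A} = \mathcal{P}_{\mathbf{b}}(A)$ and $\mathcal{B} = \mathcal{P}_{\mathbf{b}}(x_rB):x_{r,s}$ are themselves (partial) polarizations of the monomial ideals $A$ and $B$ arising from the corresponding decomposition $I = x_rB + A$ of $I$. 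Thus each term appearing after one step of a basic double G-linkage sequence on $\mathcal{P}_{\mathbf{b}}(I)$ is again a polarization of a monomial ideal in fewer variables (after relabeling), so the inductive hypothesis applies to $\mathcal{B}$ and we can chase the whole sequence down to a complete intersection generated by indeterminates, whose depolarization is visibly a complete intersection generated by indeterminates.

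For $(2)\Rightarrow(1)$, I would argue by induction on the length of the basic double G-linkage sequence witnessing that $\mathcal{P}_{\mathbf{b}}(I)$ is glicci. By \cref{lem:noBadPols} (applied after relabeling to reduce to $r=1$) the first step may be taken at some $x_{r,1}$, so \cref{prop:liftPol} produces the decomposition $I = x_rB + A$ with $\mathcal{A} = \mathcal{P}_{\mathbf{b}}(A)$ the ideal of the deletion; the hypothesis that the depolarization of $\mathcal{A}$ — which is exactly $A$, by \cref{not:depolarization} — defines a $G_0$ quotient is precisely what \cref{prop:liftPol} needs to conclude that $I = x_rB+A$ is a genuine monomial basic double G-link of shift $1$. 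Now $\mathcal{B}$ is a polarization of $B$, the remainder of the sequence witnesses that $\mathcal{B}$ is glicci via monomial basic double G-links of shift $1$, and the hypotheses on depolarizations of ideals of deletions are inherited (depolarizing an ideal appearing in the tail of the sequence, viewing it inside $\mathcal{B}$, agrees with depolarizing it inside $\mathcal{P}_{\mathbf{b}}(I)$, since $\mathcal{B}$ is obtained from $\mathcal{P}_{\mathbf{b}}(x_rB)$ by dividing out $x_{r,1}$ and the depolarization substitutions for the variables $x_{r,j}$ are transparent). By induction $B$ is glicci via monomial basic double G-links of shift $1$, and prepending the step $I = x_rB+A$ gives the claim for $I$. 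The direction $(1)\Rightarrow(2)$ is the same induction run in the opposite direction using \cref{prop:BDLgivesBDLofPolarization}: starting from $I = x_iB + A$ and applying that proposition yields $\mathcal{P}_{\mathbf{b}}(I) = x_{i,1}\mathcal{B} + \mathcal{P}_{\mathbf{b}}(A)$ as a basic double G-link, the depolarization of $\mathcal{P}_{\mathbf{b}}(A)$ is $A$ which is $G_0$ by hypothesis (it is even Cohen--Macaulay), and the inductive hypothesis applied to $B$ and its polarization $\mathcal{B}$ supplies the rest of the sequence together with the $G_0$ conditions on the depolarizations of its ideals of deletions.

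For the equivalence $(2)\Leftrightarrow(3)$ I would restrict to the case $\mathbf{b} = \mathbf{b}_{\max}$, so $\mathcal{P}_{\mathbf{b}}(I) = \mathcal{P}(I)$ is squarefree and is the Stanley--Reisner ideal of some complex $\Delta$. By the Nagel--R\"omer characterization recalled in \cref{s:vertex-decomposition}, a monomial basic double G-link $I_\Delta = x_v I_{\lk_\Delta(v)} + I_{\del_\Delta(v)}$ of shift $1$ exists exactly when $v$ is a weak shedding vertex, and in that case the ideal of the deletion is $I_{\del_\Delta(v)}$, the Stanley--Reisner ideal of the cone over the deletion. Iterating, a sequence of monomial basic double G-links of shift $1$ on $\mathcal{P}(I)$ terminating in an ideal generated by indeterminates is the same data as a weak vertex decomposition of $\Delta$ (here one uses that a monomial basic double G-link of shift $1$ on a squarefree monomial ideal has squarefree constituents, and that once no variable divides any generator the ideal is generated by a subset of the variables, i.e. we have reached the cone/simplex base case); moreover the ideals of the deletions in the two descriptions coincide term by term, so the $G_0$-on-depolarization conditions match. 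The only mild subtlety is the word "glicci" in $(2)$: a priori it just asks that $\mathcal{P}(I)$ be G-linked to \emph{some} complete intersection via such a sequence, not that the sequence end at an ideal of indeterminates; but a monomial basic double G-link of shift $1$ on a monomial ideal, repeated, can only strictly decrease the number of minimal generators not equal to a variable's power until one lands on a monomial complete intersection, and a monomial complete intersection of shift-$1$-generators must be generated by indeterminates, so the two formulations agree.

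\textbf{Main obstacle.} The genuinely delicate point — and the one I would spend the most care on — is verifying that the auxiliary $G_0$ conditions are passed correctly back and forth through the induction: namely that "depolarize, then look at the ideal of the deletion of the next step" produces the same ideal as "look at the ideal of the deletion of the next step inside $\mathcal{B} = \mathcal{P}_{\mathbf{b}}(x_rB):x_{r,1}$, then depolarize," so that the list of auxiliary ideals attached to a sequence on $\mathcal{P}_{\mathbf{b}}(I)$ and the list attached to the lifted sequence on $I$ are in honest bijection. This is a bookkeeping statement about how the depolarization substitutions $x_{i,j} \mapsto x_i$ interact with dividing out $x_{r,1}$ and with the relabeling $x_{r,j}\mapsto x_{r,j-1}$ from \cref{lem:polarization-in-disguise}, and it is routine but must be stated precisely for the equivalences to be clean. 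Everything else is a direct concatenation of \cref{prop:liftPol}, \cref{prop:BDLgivesBDLofPolarization}, \cref{lem:noBadPols}, and \cite[Theorem 3.3]{NR08}.
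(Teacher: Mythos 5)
Your proposal is correct and takes essentially the same route as the paper, which deduces $(1)\Leftrightarrow(2)$ from \cref{prop:liftPol} and \cref{prop:BDLgivesBDLofPolarization} and $(2)\Leftrightarrow(3)$ from the Nagel--R\"omer correspondence (the paper cites \cite[Remark 2.4]{NR08}, the two-way characterization, rather than \cite[Theorem 3.3]{NR08}). The paper's own proof is just two sentences of citation, so your write-up in effect supplies the induction and bookkeeping that those citations leave implicit.
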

\begin{proof}
The equivalence of (2) and (3) follows from \cite[Remark 2.4]{NR08}. The equivalence of (1) and (2) follows from \cref{prop:liftPol,prop:BDLgivesBDLofPolarization}.
\end{proof}

We now apply the results from this section to monomial ideals that have appeared as the initial ideals of ideals of independent interest.  

\begin{example}\label{ex:manySquares}
Let $I = (x^2-yz,wz^2-y^2x,wxz-y^3)$. One can check that $I$ is a radical ideal that defines a Cohen--Macaulay quotient ring and yet has no squarefree initial ideal. (It is also not weakly geometrically vertex decomposable in the sense of \cite[Definition 4.6]{KR21}.)  With respect to the lexicographic term order on $w>x>y>z$, the given generators form a Gr\"obner basis.  The corresponding initial ideal is $J = (x^2,wz^2,wxz)$, whose polarization $\P(J)=(xx',wzz',wxz)$ is the Stanley--Reisner ideal of a vertex decomposable simplicial complex.  If one first decomposes at $x$ and then at $z$, for example, then the depolarizations of the ideals of the respective deletions define $G_0$ quotients.  Via \cref{cor:summaryMonomialCase}, the vertex decomposition of the Stanley--Reisner complex of $\P(J)$ gives rise to a sequence of basic double G-links connecting $J$ to a complete intersection.  

Moreover, the first step of the geometric vertex decomposition of $J$ may be taken at whichever of $x,z,$ or $w$ one chooses, though at neither of $x'$ nor $z'$, as predicted by \cref{lem:noBadPols}.

In order to study $I$ itself, we will require a more general notion of polarization, which we develop in \cref{sect:geoPol}.  Having developed the appropriate machinery, we will show that $I$ is glicci in \cref{ex:mainTheroemApp1}.
\end{example}

\begin{example}
Set $R = \QQ[a,\ldots,i]$, and consider the toric ideal 
\begin{eqnarray*}
I =&(fg-eh, ah^2-dgi, agh-cfi, afh-dei, ag^2-cei, aeg-bch, af^2-bdh, aef-bdg, \\ 
& ae^2-bcf, 
 bh^2-efi, bgh-e^2i, dg^2-cfh, cf^2-deg, a^2eh-bcdi)
\end{eqnarray*}
of \cite[Example 13.17]{Stu96}, which defines a projectively normal toric variety yet has no squarefree initial ideals.  Sturmfels initially conjectured that none of the initial ideals of $I$ defined a Cohen--Macaulay quotient though later found that some did \cite[Section 4]{Stu00}.  A modern Macaulay2 \cite{M2} computation now shows that, among the $1879$ initial ideals of $I$, $28$ define Cohen--Macaulay quotients, all of them moreover having polarization that are the Stanley--Reisner ideals of vertex decomposable simplicial complexes. 

One such initial ideal is \[
J =\left(dg^{2},\,bh^{2},\,eh,\,deg,\,bdh,\,bdg,\,ae^{2},\,dgi,\,bch,\,e^{2}i,\,dei,\,cei,\,bcdi,\,cfi\right)
,\] whose polarization is \[
\P(J)=\left(dg_1g_2,\,bh_1h_2,\,eh,\,deg,\,bdh,\,bdg,\,ae_1e_2,\,dgi,\,bch,\,e_1e_2i,\,dei,\,cei,\,bcdi,\,cfi\right).
\] 

\noindent There is a decomposition $\P(J) = eB+A$, with $A=\left(dgi,\,cfi,\,bh_1h_2,\,bdh,\,bch,\,dg_1g_2,\,bdg,\,bcdi\right)$ and $B= A+(h,\,e_2i,\,di,\,ci, \,dg, \,ae_2)$. By \cref{prop:liftPol} this gives rise to a decomposition of $J$ as
\[
J=eB'+A' \qwith A'=\left(dgi,\,cfi,\,bh^2,\,bdh,\,bch,\,dg^2,\,bdg,\,bcdi\right), B'=(h,\,ei,\,di,\,ci, \,dg, \,ae).
\] 
Since the quotient by $A'$ is $G_0$, the above decomposition constitutes a monomial basic double G-link, so $J$ is G-linked to $B'$.
Continuing, $B'=\P(B')$ is G-linked to the complete intersection ideal $\left(i,\,h,\,g,\,ae\right)$ in two steps using the decomposition
\[
B'=d \left(i,\,h,\,g,\,ae\right) + \left( h, \, ei, \, ci,\, ae \right).
\]

By contrast, consider a different initial ideal \[
K =\left(eh,\,e^2i,\,dei,\,bh^2,\,ah^2,\,agh,\,bdh,\,dg^2,\,ag^2,\,deg,\,aeg,\,bdg,\,ae^2,\,bcdi\right)
\]
 of $I$. Its polarization \[\P(K) =\left(eh,\,e_1e_2i,\,dei,\,bh_1h_2,\,ah_1h_2,\,agh,\,bdh,\,dg_1g_2,\,ag_1g_2,\,deg,\,aeg,\,bdg,\,ae_1e_2,\,bcdi\right)\]
is associated to the vertex decomposable simplicial complex $\Delta_{\P(K)}$.  In contrast to the example involving $J$, the ideal of the deletion at any valid geometric vertex decomposition of $\Delta_{\P(K)}$ does not define a $G_0$ quotient, and so these vertex decompositions do not induce basic double G-links involving $K$.
\end{example}

In \cref{cor:summaryMonomialCase} we have found a close relationship between a strategy for establishing that a monomial ideal is glicci and a well-studied strategy for establishing that its polarization, a squarefree monomial ideal, is glicci, namely sequences of basic double G-links of shift $1$. We note below that the licci property can also, under appropriate hypotheses, be transferred between a monomial ideal and its polarization. Recall that an ideal is called \textbf{licci} if it is in the CI-liaison class of a complete intersection.  Context for this study was described in \cref{sec:intro}.

\begin{remark}\label{rem: licci}
    Let $I$ be a monomial ideal in a polynomial ring $R$ over an infinite field, and let $\bb$ be a vector of non-negative integers. If $I$ is equigenerated or if non-homogeneous links are permitted, then $I$ is licci if and only if its partial polarization $\P_{\bb}(I)$ is licci.  This equivalence follows from the combination of \cite[Proposition 3]{HU07} and \cite[Theorem 2.12]{HU2}.
\end{remark}

\subsection{Vertex decomposability of polarized  ideals}

In this section we give an application of the interplay between G-liaison and polarization 
developed in the previous section. 
We begin by recalling the notion of strongly stable monomial ideals.  These are some of the most important classes of monomial ideals in computational algebra since, e.g., in characteristic zero generic initial initials are strongly stable.  Indeed, every strongly stable monomial ideal is Borel fixed, and, in characteristic $0$, every Borel-fixed monomial ideal is strongly stable.

    A monomial ideal $I \subset S$ is called {\bf strongly stable} if one has $x_iu/x_j \in I$ for all monomials $u \in I$ and all $i < j$ such that $x_j$ divides $u$.

Stable ideals are a larger class of monomial ideals endowed with similar properties as strongly stable ideals (see \cite{EK}).  For a monomial $u$ we denote by $m(u)$ the largest index $j$ such that $x_j\mid u$. A monomial ideal $I$ is called a {\bf stable ideal} if for all monomials $u \in I$ and all $i < m(u)$ one has $x_iu/x_{m(u)} \in I$.

Migliore and Nagel \cite[Theorem 3.5]{MN02} showed that strongly stable ideals are glicci using basic double G-linkage, but not basic double G-linkage where all ideals involved are monomial. Therefore our \cref{cor:summaryMonomialCase} does not apply directly to show that polarizations of such ideals are glicci. However, we will prove that polarizations of stable ideals are glicci and that their simplicial complexes are vertex decomposable in \cref{thm:StableVertexDecomp}. 
 
A simplicial complex $\Delta$ is called {\bf shifted} if for all $F \in \Delta$, $j \in F$ and $j < i$ such that $i \not\in F$ we have $(F \setminus \{j\}) \cup \{i\} \in \Delta$. The study of shifted simplicial complexes is a combinatorial counterpart to the study of generic initial ideals (see \cite{AHH98}). Nagel and R\"omer \cite{NR08} showed that shifted simplicial complexes are glicci via a sequence of monomial basic double G-links of shift $1$. Although shifted simplicial complexes are analogous to generic initial ideals, polarizations of strongly stable ideals need not give rise to shifted complexes, as shown by the following example.

\begin{example}
Consider the strongly stable ideal $I=(x^2,xy, y^2)$ with polarization $\P(I)=(x_1x_2,x_1y_1,y_1y_2)$. The simplicial complex $\Delta$ of this ideal has facets $\{x_1,y_2\},\{x_2,y_1\},\{x_2,y_2\}$.  If $x_1<y_1$ then $\Delta$ is not shifted since $\{y_1,y_2\}\not\in \Delta$ and if $y_1<x_1$ then $\Delta$ is not shifted since $\{x_2,x_1\}\not\in \Delta$.
\end{example}

The structure of the induction in the argument that follows is the same as that given by Migliore and Nagel \cite{MN02} for strongly stable monomial ideals. The precise details of our argument and the connection to polarization will allow us to give a topological application in \cref{thm:ArtinianVertexDecomp}.

\begin{theorem}\label{thm:StableVertexDecomp}
    If $I$ is a stable monomial ideal that defines a Cohen--Macaulay quotient, its polarization $\P(I)$ is a glicci ideal whose Stanley--Reisner complex is vertex decomposable. 
\end{theorem}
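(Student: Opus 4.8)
The plan is to induct on the number of variables and, within that, on the regularity (or on the maximal exponent appearing in a generator), peeling off one variable at a time via basic double G-links and invoking \cref{prop:liftPol} and \cref{cor:summaryMonomialCase}. Write $I \subset S = \kappa[x_1,\dots,x_n]$ for the stable monomial ideal. The case $I=0$ or $I=S$ being trivial, we may assume $I$ is proper and nonzero. If $x_1$ divides no minimal generator of $I$, then $I$ is (after relabeling) a stable ideal in $\kappa[x_2,\dots,x_n]$ extended to $S$, so $\P(I)$ is a cone and we reduce to fewer variables; thus assume $x_1 \mid \mu$ for some $\mu \in G(I)$. Following the Migliore--Nagel induction for strongly stable ideals, I would decompose at $x_1$: set $A = I : x_1$ and $B = (\sigma \in G(I) : x_1 \nmid \sigma)$, so that $I = x_1 B + A$ with $A \subseteq B$ and $A : x_1 = A$ by \cref{lem:formOfBDL}. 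The key structural facts are that $B$, viewed in $\kappa[x_2,\dots,x_n]$, is again a stable ideal (it is the ``restriction'' of $I$ to monomials not involving $x_1$), and that $A$ is the ``stable-ideal version'' of $I : x_1$, which is again stable and, crucially, is the Stanley--Reisner-style deletion. One then needs that $\hgt(B) = \hgt(A)+1$ and that $S/A$ is Cohen--Macaulay and $G_0$; these are exactly the inputs needed to conclude $I = x_1 B + A$ is a (monomial) basic double G-link of shift $1$.

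Next I would polarize: by \cref{prop:BDLgivesBDLofPolarization}, the basic double G-link $I = x_1 B + A$ induces a basic double G-link $\P(I) = x_{1,1}\mathcal{B} + \P(A)$ where $\mathcal{B} = \P(x_1 B) : x_{1,1}$ is, up to relabeling, a polarization of $B$ (by \cref{lem:polarization-in-disguise}), and $\P(A)$ is a polarization of $A$. Reading this through \cite{NR08} (as in the discussion preceding \cref{d:wvd} and in \cref{cor:summaryMonomialCase}), this basic double G-link is precisely a step of a vertex decomposition of the Stanley--Reisner complex $\Delta_{\P(I)}$ at the vertex $x_{1,1}$: the link is $\mathcal{B} + (x_{1,1})$ and the deletion is $\P(A) + (x_{1,1})$. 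By induction (on number of variables for $\mathcal{B}$, since it lives in fewer ``original'' variables, and on exponent/regularity for $\P(A)$, whose corresponding depolarization $A$ is a stable ideal with strictly smaller invariants), both $\Delta_{\mathcal{B}+(x_{1,1})} = \lk_{\Delta_{\P(I)}}(x_{1,1})$ and $\Delta_{\P(A)+(x_{1,1})} = \del_{\Delta_{\P(I)}}(x_{1,1})$ are vertex decomposable. Purity of $\Delta_{\P(I)}$ follows because $S/\P(I)$ is Cohen--Macaulay (polarization preserves Cohen--Macaulayness, and $S/I$ is Cohen--Macaulay by hypothesis), hence equidimensional. Therefore $\Delta_{\P(I)}$ is vertex decomposable by \cref{def:vertex decomposable}, and being vertex decomposable it is in particular weakly vertex decomposable, so by \cite[Theorem 3.3]{NR08} (or directly by assembling the basic double G-links), $\P(I)$ is glicci.

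The main obstacle, and the step requiring the most care, is verifying the hypotheses of the basic double G-link at each stage of the induction — specifically that $\hgt(B) = \hgt(A)+1$ and that $S/A$ is Cohen--Macaulay and $G_0$ (equivalently, after polarizing, that the deletion $\del_\Delta(x_{1,1})$ is Cohen--Macaulay of the right dimension, i.e.\ that $x_{1,1}$ is a genuine shedding vertex). For stable ideals this should follow from the Eliahou--Kervaire structure theory: $A = I : x_1$ has a well-understood generating set, its minimal primes are controlled, and the Cohen--Macaulayness of $S/I$ forces the analogous property on the pieces. In particular one must check that the height does not drop when passing to $B$, which is where the stable (rather than merely monomial) hypothesis is essential — the analogous statement fails for arbitrary monomial ideals, as \cref{ex:monomialPol} shows. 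A secondary bookkeeping obstacle is ensuring the induction is well-founded: I would track the pair (number of variables actually dividing some generator, maximal exponent) lexicographically, noting that the link $\mathcal{B}$ loses a variable while the deletion $\P(A)$ either loses a variable or strictly decreases the maximal exponent relative to $I$. Finally, one should confirm that the $G_0$ conditions on depolarizations demanded in \cref{cor:summaryMonomialCase}(3) are automatic here because, at each step, $A$ is itself a stable ideal defining a Cohen--Macaulay quotient, which for monomial ideals of this type is $G_0$ by the complete-intersection-at-minimal-primes argument used in \cref{lem:G0pol}.
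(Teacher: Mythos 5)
Your overall shape is right — decompose at $x_1$, note that both pieces are again stable, induct, and read the decomposition as a vertex decomposition after polarizing — but your proof has two genuine problems, one of which is fatal as written.

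\textbf{The fatal issue: your route forces a $G_0$ hypothesis that fails.} You propose to first certify $I = x_1 B + A$ as a basic double G-link (requiring $S/A$ to be $G_0$), then transport it to $\P(I)$ via \cref{prop:BDLgivesBDLofPolarization}, then read off vertex decomposability. In the last paragraph you assert that the required $G_0$ conditions ``are automatic here because, at each step, $A$ is itself a stable ideal defining a Cohen--Macaulay quotient, which for monomial ideals of this type is $G_0$.'' This is false, and the paper's own \cref{ex:monomialPol} is a counterexample to your claim: $I=(x,y,z)^2$ is stable and Cohen--Macaulay, and the decomposition at $x$ produces $A = (y^2,yz,z^2)$, which does \emph{not} define a $G_0$ quotient, so $I = x(x,y,z)+(y^2,yz,z^2)$ is \emph{not} a basic double G-link. (Also, the ``argument used in \cref{lem:G0pol}'' you cite takes $G_0$ as an \emph{input}; it does not derive it.) The theorem is nonetheless true for $(x,y,z)^2$ because it only claims $\P(I)$ is glicci, not $I$. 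The way out — and what the paper actually does — is to never certify any basic double G-link at the level of $I$: instead prove \emph{directly} and purely combinatorially that $\Delta_{\P(I)}$ is vertex decomposable, using only that link and deletion are vertex decomposable and that $\Delta_{\P(I)}$ is pure (which holds because polarization preserves Cohen--Macaulayness). Glicci-ness of $\P(I)$ then follows from Nagel--R\"omer's theorem for squarefree monomial ideals, where the relevant $G_0$ condition is automatic because Stanley--Reisner rings are reduced.

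\textbf{The missing reduction: you never establish Cohen--Macaulayness of the pieces.} You observe that you need $S/A$ (and effectively $S/B$) Cohen--Macaulay of the right heights and wave at ``Eliahou--Kervaire structure theory,'' but do not carry it out. The paper's key move is to use the Eliahou--Kervaire projective dimension formula $\pd(R/I)=\max\{m(u)\mid u\in G(I)\}$ together with Auslander--Buchsbaum to conclude that a Cohen--Macaulay stable ideal of height $c$ has all its generators in $\kappa[x_1,\dots,x_c]$, hence one may reduce at the outset to the artinian case. Once artinian, $I_0$ contains pure powers of $x_2,\dots,x_c$ and $S/I_0$ is artinian, and $S/I'$ is artinian because $I\subseteq I'$; so Cohen--Macaulayness and height bookkeeping become trivial, and no delicate structure theory is needed. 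Without this reduction your argument has a real gap. Two smaller remarks: you have swapped $A$ and $B$ relative to \cref{lem:formOfBDL} (you set $A=I:x_1$ but then write $I=x_1B+A$, which is inconsistent), and your description of the induction has it backwards — the deletion $I_0$ loses the variable $x_1$, while the colon $I:x_1$ decreases the maximal $x_1$-exponent.
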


\begin{proof}
Nagel and R\"omer \cite[Theorem 3.3]{NR08} showed that the Stanley--Reisner ideal of a vertex decomposable simplicial complex is glicci via a sequence of monomial basic double G-links of shift $1$. Hence, it suffices to show that $\P(I)$ is vertex decomposable.

Suppose that $I$ is a stable monomial ideal of height $\hgt(I)=c$ in $R$, a polynomial ring of dimension $n$. It is a consequence of the Eliahou--Kervaire \cite{EK} formula for projective dimension (which states $\pd(R/I)=\max\{m(u)\mid u\in G(I)\}$) and the fact that $I$ is a Cohen--Macaulay ideal (which implies $\pd(R/I) = \hgt(I)$ by the Auslander--Buchsbaum formula) that the minimal generators of $I$ belong to $\kappa[x_1,\ldots,x_c]$; thus, we may reduce to the case where $R/I$ is artinian and hence $n=c$. We proceed by induction on $c$ and on the greatest value $t$ so that $x_1^t \in I$, with the case when $c=1$ being trivial. 

There is a decomposition 
\begin{equation}\label{eq:decomp}
I=x_1I'+I_0
\end{equation}
 where 
 $I'=I:x_1$ and $I_0 \subset \kappa[x_2,\ldots,x_c]$ are monomial ideals uniquely determined by \cref{lem:formOfBDL}. Recall $I_0\subset I'$. If $\max\{t \colon x_1^t \in I\} = 1$, then $I' = (1)$ and $I = I_0+(x_1)$. By induction on $c$, $\P(I_0)$ is the Stanley--Reisner ideal of a vertex decomposable simplicial complex $\Delta$ on the ambient vertex set $\{2,\ldots,c\}$. But then $\Delta$ is also the Stanley--Reisner complex of $I$, viewed on the ambient vertex set $\{1,\ldots,c\}$. Hence $I$ is also the Stanley--Reisner ideal of a vertex decomposable simplicial complex. Henceforth, assume $\max\{t \colon x_1^t \in I\}>1$, in which case $I'$ is a proper ideal.

 Since $S/I$ is assumed artinian, $I_0$ contains pure powers of $x_i$ for every $i\geq 2$.  Thus $\hgt(I_0)=c-1$ and $S/I_0$ is Cohen--Macaulay. Since $S/I$ is assumed artinian and $I\subset I'$, $S/I'$ is artinian hence Cohen--Macaulay as well.
 
 We claim that the ideals $I'$ and $I_0$ are stable, the latter being viewed as an ideal of $\kappa[x_2,\ldots,x_c]$.
If $u$ is a monomial in $I'$ and $i<m(u)$ (in particular $1<m(u)$), then $x_1u\in I$ and therefore $x_i(x_1u)/x_{m(u)}=x_1(x_iu/x_{m(u)})\in I$ because $I$ is stable. Hence $x_iu/x_{m(u)}\in I'$, whence $I'$ is stable. Similarly, stability of $I_0$ follows directly from stability of $I$.
 
Transforming the decomposition \cref{eq:decomp} by polarization we obtain
\begin{equation}\label{eq:SSVertexDecomp}
\P(I)=x_{1,1}\B+\P(I_0), \, \qwhere \, \B = \P(x_1I'):x_{1,1}.
\end{equation}

Because $I' = I:x_1$, $\max\{t \colon x_1^t \in I'\} = \max\{t \colon x_1^t \in I\}-1$ and so, by induction, $\P(I')$ is the Stanley--Reisner ideal of a vertex decomposable simplicial complex. Hence, by \cref{lem:polarization-in-disguise}, so too is $\B$. By induction on $c$, $\P(I_0)$ is also the Stanley--Reisner ideal of a vertex decomposable simplicial complex. 

Let $\Delta$ denote the Stanley--Reisner complex of $\P(I)$, and let $v$ denote the vertex of $\Delta$ corresponding to the variable $x_{1,1}$. Note that $\B$ is the Stanley--Reisner ideal of the cone over $\lk_{\Delta}(v)$ with apex $v$. Similarly, $\P(I_0)$ is the Stanley--Reisner ideal of the cone over $\del_{\Delta}(v)$ with apex $v$. Because a simplicial complex is vertex decomposable if and only if any cone over it is \cite[Proposition 2.4]{PB80}, $\lk_\Delta(v)$ and $\del_\Delta(v)$ are vertex decomposable; therefore, $\Delta$ is vertex decomposable, as desired.
 \end{proof}

In the proof of the previous theorem, an important reduction occurred to the artinian case, to which we now turn our attention.  Huneke and Ulrich \cite{HU07} previously gave an algorithm for determining if an artinian ideal is in the CI-liaison class of a complete intersection and, in the same paper, showed that all artinian monomial ideals are glicci if non-homogeneous links are permitted, an allowance that does not give rise to G-links of subschemes of projective space. We consider polarizations of artinian monomial ideals, with motivation coming from recent work on the topology of the Stanley--Reisner complexes of such ideals.  Murai \cite[Remark 1.8]{M2011} first showed that the Stanley--Reisner complex of the polarization of an artinian monomial ideal is vertex decomposable.  See also \cite{FM22, AFL22, HLSR22} for recent generalizations to non-standard notions of polarization.  

\begin{theorem}[See also \cite{M2011} for vertex decomposability]\label{thm:ArtinianVertexDecomp}
    If $I$ is an artinian monomial ideal, its polarization $\P(I)$ is a glicci monomial ideal that is the Stanley--Reisner ideal of a vertex decomposable simplicial complex. 
\end{theorem}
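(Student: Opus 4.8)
\textbf{Proof plan for \cref{thm:ArtinianVertexDecomp}.}

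The plan is to reduce the statement to \cref{thm:StableVertexDecomp} via a deformation that does not disturb polarization, namely a \emph{generic coordinate change over a ground field extension}, together with the classical fact that the generic initial ideal of an artinian ideal is artinian and (strongly) stable. Concretely, let $I \subset R = \kappa[x_1,\ldots,x_n]$ be an artinian monomial ideal. Passing if necessary to an infinite extension of $\kappa$ (which affects neither the combinatorics of $\mathcal{P}(I)$ nor the Cohen--Macaulay/glicci properties of the associated Stanley--Reisner ring, since these are insensitive to flat base field extension), we form $J = \operatorname{gin}(I)$ with respect to a suitable term order; since $I$ is artinian, so is $J$, and $J$ is strongly stable, hence in particular a stable monomial ideal. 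Because $J$ is artinian, $R/J$ is trivially Cohen--Macaulay, so \cref{thm:StableVertexDecomp} applies and tells us that $\mathcal{P}(J)$ is glicci and that its Stanley--Reisner complex is vertex decomposable.

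The crux is then to connect $\mathcal{P}(I)$ to $\mathcal{P}(J)$. Here I would argue that $I$ and $J$ have the \emph{same polarization} (up to relabeling of variables), which is the key structural point: polarization records only the set of exponent vectors appearing in the minimal generators together with the ambient variable count, and for an artinian monomial ideal these data are captured by $\mathcal{P}(I)$; the subtlety is that $\operatorname{gin}$ need not preserve the multiset of minimal generators. If equality of polarizations does not hold on the nose, the fallback — which is what I expect actually to be needed — is to run the induction of \cref{thm:StableVertexDecomp} directly for $\mathcal{P}(I)$. That is, mimic that proof verbatim: write $I = x_1 I' + I_0$ with $I' = I:x_1$ and $I_0 = (\sigma \in G(I) : x_1 \nmid \sigma)$ as in \cref{lem:formOfBDL}; observe that $I$ artinian forces $I_0 \subset \kappa[x_2,\ldots,x_n]$ to contain a pure power of each $x_i$ with $i\ge 2$ (so $\operatorname{ht}(I_0) = n-1$ and $R/I_0$ is Cohen--Macaulay, in fact artinian after restriction), and that $I' = I:x_1$ is again artinian; polarize to get $\mathcal{P}(I) = x_{1,1}\mathcal{B} + \mathcal{P}(I_0)$ with $\mathcal{B} = \mathcal{P}(x_1 I'):x_{1,1}$ as in \cref{eq:SSVertexDecomp}; apply \cref{lem:polarization-in-disguise} to identify $\mathcal{B}$ with a polarization of $I'$; induct on $n$ and on $\max\{t : x_1^t \in I\}$ exactly as before, using the translation into links and deletions of a cone to conclude that $\del_\Delta(v)$ and $\lk_\Delta(v)$ are vertex decomposable and hence so is $\Delta$. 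The base cases ($n=1$, and $\max\{t : x_1^t \in I\} = 1$ where $I = I_0 + (x_1)$) are handled identically to \cref{thm:StableVertexDecomp}. The one place where artinian is used in place of stable is precisely in guaranteeing that $R/I_0$ and $R/I'$ are Cohen--Macaulay — for a general Cohen--Macaulay monomial ideal this can fail, but artinianness of $I$ makes it automatic for both.

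Once vertex decomposability of $\Delta = \Delta_{\mathcal{P}(I)}$ is established, the remaining assertions are immediate citations: a vertex decomposable complex is shellable (Provan--Billera \cite{PB80}), a shellable complex is Cohen--Macaulay, and — since $\mathcal{P}(I)$ has the same height and Cohen--Macaulay behavior as $I$ while being a Stanley--Reisner ideal of a vertex decomposable complex — Nagel and R\"omer \cite[Theorem 3.3]{NR08} give that $\mathcal{P}(I)$ is glicci via a sequence of monomial basic double G-links of shift $1$; finally, a shellable pure complex of dimension $d$ whose Stanley--Reisner ring is not a polynomial ring is a simplicial ball or sphere (and it is a sphere precisely when $\mathcal{P}(I)$, equivalently $I$, is a complete intersection), recovering the geometric statement of \cite{FM22}. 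I expect the main obstacle to be the bookkeeping in the inductive step — specifically, verifying that artinianness is inherited by both $I'$ and $I_0$ under the decomposition \cref{lem:formOfBDL} and that the relabeling in \cref{lem:polarization-in-disguise} is compatible with the ambient vertex set on which $\Delta$ is built — rather than any conceptual difficulty, since \cref{thm:StableVertexDecomp} has already done the hard structural work and the artinian hypothesis only simplifies the Cohen--Macaulayness verifications that were the delicate points there.
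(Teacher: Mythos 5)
Your fallback argument is exactly the paper's proof: the paper simply observes that the induction of \cref{thm:StableVertexDecomp} goes through verbatim because artinianness of $I$ forces both $I'=I:x_1$ and $I_0$ to give artinian (hence automatically Cohen--Macaulay) quotients, so the decomposition $\mathcal{P}(I)=x_{1,1}\mathcal{B}+\mathcal{P}(I_0)$ again exhibits a shedding vertex. Your preliminary route through $\operatorname{gin}(I)$ is, however, a definite dead end rather than a subtlety: generic initial ideals do not preserve polarization (already $I=(x^2,y^2)$ has revlex gin $(x^2,xy,y^3)$, and $\mathcal{P}(I)=(x_1x_2,y_1y_2)$ is a complete intersection while $\mathcal{P}(\operatorname{gin}(I))$ is not even of the same codimension in its ambient ring once one relabels to compare), so there is no way to transfer vertex decomposability from $\mathcal{P}(\operatorname{gin}(I))$ to $\mathcal{P}(I)$; you correctly flagged this and pivoted to the argument the paper actually uses.
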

\begin{proof}
    The proof follows the same path as that of \cref{thm:StableVertexDecomp}. The key observation needed for the induction is that the ideals $I'$ and $I_0$ in \cref{eq:decomp} give artinian quotients $S/I'$ and $\kappa[x_2,\ldots, x_n]/I_0$, respectively. This shows that the simplicial complex associated to $\P(I)$ is vertex decomposable.
    \end{proof}

\begin{remark}\label{r:grafted} 
The polarization of an artinian monomial ideal is known to form the facet ideal of what is called a {\em grafted} complex (see~\cite{Fa2005}  and~\cite{CFHNVT2025}). This class includes the edge ideals of {\em whiskered} graphs (see~\cite{V1990}).  Combining these results with  \cref{thm:ArtinianVertexDecomp} shows that facet ideals of grafted complexes - and in particular edge ideals of  whiskered graphs - are glicci.  
\end{remark}

\section{Homogeneous ideals and Gorenstein liaison: background}\label{s:geometric-vertex-decomposition}
\subsection{Geometric vertex decomposition}\label{ss:gvd-background}

The remainder of this paper makes consistent use of term orders, initial ideals, and Gr\"obner bases.  For a review of standard facts and terminology, we refer the reader to \cite[Chapter 15]{Eis95}.

Knutson, Miller, and Yong \cite{KMY09} introduced geometric vertex decomposition in their study of vexillary matrix Schubert varieties, otherwise known as one-sided mixed ladder determinantal ideals.

\begin{defn}\label{def:ycomp}
Suppose that $R = \kappa[x_1, \ldots, x_n]$ is equipped with a term order $<$.  Fix $j \in [n]$, and set $y = x_j$. For a polynomial $f = \sum_{i=0}^t \alpha_i y^i \in R$, where $\alpha_t \neq 0$ and no term of any $\alpha_i$ is divisible by $y$, write $\init_{y} (f) = \alpha_t y^t$.  If $\init_<(f) = \init_<(\init_{y}(f))$ for all $f \in R$, we say that $<$ is a \textbf{$y$-compatible term order}.  

For an ideal $I$ of $R$, write $\init_{y}(I) = (\init_{y}(f) \colon f \in I)$.  If there exists a $<$-Gr\"obner basis $\G$ of $I$ such that $\init_<(g) = \init_<(\init_{y}(g))$ for all $g \in \G$, we call $<$ \textbf{$y$-compatible with respect to $I$}.  
\end{defn}

If $<$ is $y$-compatible with respect to $I$, then it follows from \cref{def:ycomp} that $\init_<(I) = \init_<(\init_{y}(I))$.

We repeat some basic facts about geometric vertex decomposition, which can be found in \cite[Section 2.1]{KMY09}.  For an ideal $I$ and term order $<$ that is $y$-compatible with respect to $I$, define 
\begin{eqnarray*}
    C_{y,I} &=& \bigcup_{i \geq 1} (\init_{y} (I): y^i) \\
    N_{y,I} &=& (I \cap \kappa[x_1, \ldots, \hat{y}, \ldots , x_n])R.
\end{eqnarray*}
 Note that $N_{y,I}+(y) = \init_{y}(I)+(y)$ and that $C_{y,I}$ and $N_{y,I}$ each have a generating set that does not involve $y$.  If $\G = \{yq_1+r_1, \dots, yq_k+r_k, h_1, \ldots, h_\ell\}$, where no term of any $q_i$, $r_i$, or $h_i$ is divisible by $y$, then \[
 \init_{y}(I) = (yq_1, \dots, yq_k, h_1, \ldots, h_\ell), \quad C_{y,I} = (q_1, \dots, q_k, h_1, \ldots, h_\ell), \quad N_{y,I} = (h_1, \ldots, h_\ell),
 \] and \[
\init_{y}(I) = yC_{y,I}+N_{y,I} = C_{y,I} \cap (N_{y,I}+(y)).
\] The given generating sets of $\init_{y}(I)$, $C_{y,I}$, and $N_{y,I}$ are $<$-Gr\"obner bases for them, respectively.
\begin{defn}
    In the setup described above, we call the equation $\init_{y}(I) = C_{y,I} \cap (N_{y,I}+(y))$ a \textbf{geometric vertex decomposition of $I$ with respect to $y$.}  We call a geometric vertex decomposition \textbf{nondegenerate} if $C_{y,I} \neq (1)$ and $\sqrt{C_{y,I}} \neq \sqrt{N_{y,I}}$.
\end{defn}

If $R/I$ is equidimensional and $\init_{y}(I) = C_{y,I} \cap (N_{y,I}+(y))$ is a nondegenerate geometric vertex decomposition, then $\hgt(C_{y,I}) = \hgt(N_{y,I})+1 = \hgt(I)$ \cite[Lemma 2.8]{KR21}. For further discussion of degenerate and nondegenerate geometric vertex decompositions, see \cite[Section 2]{KR21}.

\begin{example}\label{ex:gvd}
    Let $I = (x_{11}x_{22}-x_{12}x_{21}, x_{11}x_{23}-x_{13}x_{21}, x_{12}x_{23}-x_{22}x_{13})$, i.e., the ideal generated by the size $2$ minors of a $2 \times 3$ generic matrix. Note that the given generating set is a universal Gr\"obner basis and so, in particular, a Gr\"obner basis with respect to any $x_{11}$-compatible term order. Then $C_{x_{11},I} = (x_{22},x_{23})$, $N_{x_{11},I} = (x_{12}x_{23}-x_{22}x_{13})$, and \[
\init_{x_{11}}(I) = (x_{11}x_{22}, x_{11}x_{23}, x_{12}x_{23}-x_{22}x_{13}) = (x_{22},x_{23}) \cap (x_{12}x_{23}-x_{22}x_{13}, x_{11})
    \] is a geometric vertex decomposition of $I$ with respect to $x_{11}$.
\end{example}

If $I_\Delta$ is a squarefree monomial ideal and $v$ is the vertex of $\Delta$ corresponding to $y$, then $C_{y,I_\Delta} = I_{\lk_{\Delta} (v)}$, and $N_{y,I_\Delta} = I_{\del_{\Delta} (v)}$.  For an arbitrary ideal $I$ that admits a geometric vertex decomposition $\init_{y}(I) = C_{y,I} \cap (N_{y,I}+(y))$, we will call $C_{y, I}$ the \textbf{geometric link} and $N_{y, I}$ the \textbf{geometric deletion} of $I$ at $y$.
 
\begin{defn}\label{def:gvd}
An ideal $I\subseteq R = \kappa[x_1, \ldots, x_n]$ is \textbf{geometrically vertex decomposable} if $I$ is unmixed and if
\begin{enumerate}
\item $I = ( 1)$ or $I$ is generated by a (possibly empty) set of indeterminates in $R$, or
\item for some variable $y = x_j$ of $R$, 
$\init_y I =  C_{y,I}  \cap (N_{y,I}+ (y))$ is a geometric vertex decomposition and the contractions of $N_{y,I}$ and $C_{y,I}$ to $\kappa[x_1,\dots,\widehat{y},\dots, x_n]$ are geometrically vertex decomposable.
\end{enumerate}
\end{defn}

An ideal $I\subseteq R = \kappa[x_1, \ldots, x_n]$ is \textbf{weakly geometrically vertex decomposable} if $I$ is unmixed and if
\begin{enumerate}
\item $I = ( 1)$ or $I$ is generated by a (possibly empty) set of indeterminates in $R$, or
\item for some variable $y = x_j$ of $R$, 
$\init_y I = C_{y,I} \cap (N_{y,I}+(y))$ is a geometric vertex decomposition, the contraction of $C_{y,I}$ to the ring $\kappa[x_1,\dots,\widehat{y},\dots, x_n]$ is weakly geometrically vertex decomposable, and $N_{y,I}$ is a radical and Cohen--Macaulay ideal.
\end{enumerate}

If $I$ is weakly geometrically vertex decomposable, then $I$ is radical and $R/I$ is Cohen--Macaulay \cite[Corollary 4.8]{KR21}.  Consequently, and as the name suggests, a geometrically vertex decomposable ideal is weakly geometrically vertex decomposable. If $I_\Delta$ is a squarefree monomial ideal, then $I$ is (weakly) geometrically vertex decomposable if and only if $\Delta$ is (weakly) vertex decomposable. 

\subsection{Geometric vertex decomposition and elementary G-biliaison}\label{ss:gvd-and-elementary-G-biliaison}
Geometric vertex decomposition is intimately related to elementary G-biliaison, which we review now.  The reader may find the definition of elementary G-biliaison reminiscent of the definition of basic double G-linkage. We will describe their relationship momentarily.

 \begin{defn}\label{def:Gbiliaison}
Let $I$ and $C$ be homogeneous, saturated, unmixed ideals of $R = \kappa[x_1, \ldots, x_n]$.  Suppose there exists a homogeneous ideal $N$ satisfying all of the following properties:
\begin{itemize}
    \item $N \subseteq I \cap C$ ;
    \item $R/N$ is Cohen--Macaulay and $G_0$;
    \item $\hgt(C) = \hgt(N)+1=\hgt(I)$;
    \item there is an isomorphism of graded $R/N$-modules $I/N \cong [C/N](-\ell)$ for some $\ell\in \ZZ$.
\end{itemize}
Then we say that $I$ is obtained from $C$ by an \textbf{elementary G-biliaison of shift $\ell$}.  
\end{defn}

\begin{theorem}\cite[Theorem 3.5]{Har07}
Let $I$ and $C$ be homogeneous, saturated, unmixed ideals of a polynomial ring over a field.  If $I$ is obtained from $C$ by an elementary G-biliaison, then $I$ is G-linked to $C$ in two steps.
\end{theorem}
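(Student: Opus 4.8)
The plan is to produce, directly from the biliaison data, two arithmetically Gorenstein ideals realizing the two links, working throughout on the Cohen--Macaulay, $G_0$ scheme $S = \spec(R/N)$, where $N$ is the ideal witnessing the elementary G-biliaison of shift $\ell$. The images $\bar I = I/N$ and $\bar C = C/N$ are unmixed height-one ideals of $R/N$, i.e., effective generalized divisors on $S$, and the hypothesis $I/N \cong [C/N](-\ell)$ says precisely that $\bar I$ and $\bar C$ are linearly equivalent up to the twist by $\ell$; in particular $[\bar I] = [\bar C]$ in the (ungraded) divisor class group $\operatorname{Cl}(R/N)$.

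The key tool I would establish first is the construction of arithmetically Gorenstein ideals $\mathfrak g \supseteq N$ with $\hgt(\mathfrak g) = \hgt(N)+1$ contained in a prescribed effective divisor on $S$. Since $R/N$ is Cohen--Macaulay and $G_0$, the canonical module $\omega_{R/N}$ is a maximal Cohen--Macaulay module whose associated primes are the minimal primes of $N$ and which is generically free of rank one; hence it embeds into the total quotient ring of $R/N$ and, after scaling, is realized as an unmixed height-one ideal $\bar{\mathfrak g} \subseteq R/N$. Twisting by $\mathcal{O}(t)$ for $t \gg 0$ and adding a general effective divisor in the appropriate class---which exists because twists of divisorial ideals on a projective Cohen--Macaulay scheme acquire global sections---one can arrange both that $\bar{\mathfrak g}$ is contained in any given effective divisor and that $(R/N)/\bar{\mathfrak g}$ is Cohen--Macaulay. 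Applying $\operatorname{RHom}_R(-,\omega_R)$ to $0 \to \omega_{R/N}(t) \cong \bar{\mathfrak g} \to R/N \to (R/N)/\bar{\mathfrak g} \to 0$ and using that $R/N$ is Cohen--Macaulay (so that $\operatorname{Hom}_{R/N}(\omega_{R/N},\omega_{R/N}) = R/N$ and the relevant higher Ext modules vanish) then identifies $\omega_{R/\mathfrak g}$ with $(R/\mathfrak g)(-t)$, so that $R/\mathfrak g$ is arithmetically Gorenstein; its divisor class is $[\omega_{R/N}]$.

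The two links then go as follows. First, choose such a Gorenstein ideal $\mathfrak g_1$ with $N \subseteq \mathfrak g_1 \subseteq I$ and $\hgt(\mathfrak g_1) = \hgt(I)$, and set $J = \mathfrak g_1 : I$. Because $I$ is unmixed of height $\hgt(\mathfrak g_1)$ and $R/\mathfrak g_1$ is Gorenstein, $I$ and $J$ are directly G-linked by $\mathfrak g_1$ (the equality $\mathfrak g_1 : J = I$ being automatic since $I$ is unmixed, as recorded after \cref{def:GorensteinLikage}); here $R/J$ is Cohen--Macaulay, $J$ is unmixed, $N \subseteq \mathfrak g_1 \subseteq I \cap J$, and on $S$ one has $\bar J = \bar{\mathfrak g}_1 :_{R/N} \bar I$, whence $[\bar J] = [\omega_{R/N}] - [\bar I]$. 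Using $[\bar I] = [\bar C]$, we obtain $[\bar J] + [\bar C] = [\omega_{R/N}]$, which is exactly the condition for $\bar J$ and $\bar C$ to be complementary with respect to a Gorenstein divisor. Hence, again by the construction above---and after moving $\bar J$ within its linear system into general position relative to $\bar C$, which the freedom in the choice of $\mathfrak g_1$ permits---one can choose an arithmetically Gorenstein ideal $\mathfrak g_2$ with $N \subseteq \mathfrak g_2 \subseteq J \cap C$, $\hgt(\mathfrak g_2) = \hgt(J)$, and $\mathfrak g_2 : J = C$; then $J$ and $C$ are directly G-linked by $\mathfrak g_2$. Composing, $I$ is G-linked to $C$ in two steps.

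I expect the main obstacle to be twofold. The Bertini-type genericity used above---realizing $\omega_{R/N}$ as an ideal of the correct height with Cohen--Macaulay quotient, forcing the prescribed inclusions, and achieving general position---must be handled over an arbitrary, possibly finite, field; one either argues that G-linkage descends along $\kappa \subseteq \bar\kappa$ or produces the divisors explicitly. More delicate is that the second link must reproduce the given isomorphism $C/N \cong [I/N](\ell)$, not merely the numerical identity $[\bar J] + [\bar C] = [\omega_{R/N}]$ of divisor classes: one must track the biliaison isomorphism through both liaisons and choose the sections defining $\mathfrak g_1$ and $\mathfrak g_2$ compatibly, using that in a Cohen--Macaulay, $G_0$ ring two linearly equivalent effective divisors sharing a common Gorenstein complement coincide. (When the biliaison isomorphism already extends to multiplication by an element $t$ of $R/N$ rather than a fraction, the equation $I = tC + N$ exhibits $I$ as a basic double G-link of $C$ on $N$ and the conclusion is immediate from \cite[Proposition 5.10]{KMM+01}; the theorem's content is the genuinely fractional case.)
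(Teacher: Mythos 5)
The paper does not prove this theorem; it cites it directly to \cite[Theorem 3.5]{Har07}, so there is no in-paper proof against which to compare. Your outline does track Hartshorne's actual strategy: pass to the Cohen--Macaulay, $G_0$ scheme $\spec(R/N)$, interpret $I/N$ and $C/N$ as generalized divisors, use $\omega_{R/N}$ to produce arithmetically Gorenstein divisors of the right height containing $N$, and chain two direct links. (The alternative route the paper alludes to immediately afterward, via \cite[Remark 1.13(3)]{GMN13} and a pair of basic double G-links, establishes only that $I$ and $C$ lie in the same G-liaison class; the sharper two-step statement is Hartshorne's.)

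The gaps you flag are real, and you do not close them. Your proposed fix for the second link---that two linearly equivalent effective divisors sharing a common Gorenstein complement coincide---is true but tautological (the complement determines the divisor) and does not give what you actually need, namely that the \emph{sum of generalized divisors} $\bar J + \bar C$ on $\spec(R/N)$ is itself an ACM Gorenstein divisor, so that it can serve as $\bar{\mathfrak g}_2$ and realize $\mathfrak g_2 : J = C$. That claim does follow from the divisor-class identity $[\bar J] + [\bar C] = [\omega_{R/N}(t)]$ together with effectivity, but the verification rests on the reflexive-sheaf conventions and the addition law for generalized divisors that Hartshorne sets up, and on the $G_0$ and Cohen--Macaulay hypotheses on $R/N$; it is not a formality. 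Likewise, the Bertini-type existence of Gorenstein divisors in the required linear system over an arbitrary (possibly finite) base field is a genuine obstruction that Hartshorne's construction is explicitly built to avoid, and your two suggested workarounds (descent from $\bar\kappa$, or explicit construction) are acknowledged rather than executed. As submitted, the proposal is a correct strategic outline with the two hardest steps left open.
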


From a basic double G-link $D = A+fB$ with $d = \deg(f)$, one may construct the elementary G-biliaison $[B/A](-d) \cong D/A$, where the map is multiplication by $f$.  Also, though, from an elementary G-biliaison $I/N \cong [C/N](-\ell)$, one may construct a pair of basic double G-links that together link $I$ and $C$ \cite[Remark 1.13(3)]{GMN13}.  Consequently, basic double G-links and elementary G-biliaisons generate the same equivalence class.

\begin{theorem}\cite[Theorem 4.4]{KR21}
    If $I$ is homogeneous and weakly geometrically vertex decomposable, then $I$ is G-linked to a complete intersection generated by linear forms via a sequence of elementary G-biliaisons of shift $1$.   
\end{theorem}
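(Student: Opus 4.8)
The plan is to prove the statement by induction on the number of variables (equivalently, on a suitable invariant bounding the length of the vertex decomposition), using \cref{def:gvd} as the recursive structure and feeding the base case and inductive step into the machinery relating geometric vertex decomposition to elementary G-biliaison. In the base case, $I = (1)$ or $I$ is generated by a set of indeterminates; then $I$ is already a complete intersection generated by linear forms, so there is nothing to link. For the inductive step, suppose $I$ is weakly geometrically vertex decomposable and not of the base-case type. Then for some variable $y$, we have a geometric vertex decomposition $\init_y(I) = C_{y,I} \cap (N_{y,I}+(y))$ in which the contraction of $C_{y,I}$ is again weakly geometrically vertex decomposable and $N_{y,I}$ is radical and Cohen--Macaulay.

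The key step is to produce from this data an elementary G-biliaison of shift $1$ from $I$ down to (the extension of) $C_{y,I}$. Here I would invoke the core technical result connecting geometric vertex decomposition to G-biliaison: when the geometric vertex decomposition is nondegenerate and $R/I$ is equidimensional (both of which need to be checked or arranged, using that $I$ is unmixed, $N_{y,I}$ is Cohen--Macaulay, and the height relations $\hgt(C_{y,I}) = \hgt(N_{y,I})+1 = \hgt(I)$ recorded after \cref{def:gvd}), one obtains a graded isomorphism $I/N_{y,I} \cong [C_{y,I}/N_{y,I}](-1)$ of $R/N_{y,I}$-modules. Since $N_{y,I}$ is Cohen--Macaulay and radical (hence $G_0$, as radical implies generically reduced, which for these rings forces generically Gorenstein) and satisfies $N_{y,I} \subseteq I \cap C_{y,I}$ with the correct heights, this is precisely an elementary G-biliaison of shift $1$ in the sense of \cref{def:Gbiliaison}. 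The degenerate cases (for instance $C_{y,I} = (1)$, or $\sqrt{C_{y,I}} = \sqrt{N_{y,I}}$) must be handled separately: in those situations $I$ essentially coincides with $C_{y,I}$ or with $N_{y,I}$ up to the variable $y$, and one either skips the biliaison step or passes directly to a smaller ideal, so the induction still closes.

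Having established the single elementary G-biliaison $I \leadsto C_{y,I}$, I would then apply the inductive hypothesis to the contraction of $C_{y,I}$ to $\kappa[x_1,\dots,\widehat{y},\dots,x_n]$, which is weakly geometrically vertex decomposable and lives in one fewer variable. That yields a sequence of elementary G-biliaisons of shift $1$ from $C_{y,I}$ (viewed back in $R$, where adjoining the variable $y$ changes nothing essential about the biliaison steps since they can be carried out in the subring) down to a complete intersection of linear forms; prepending the step $I \leadsto C_{y,I}$ completes the sequence for $I$. Finally, by \cite[Theorem 3.5]{Har07} each elementary G-biliaison realizes a two-step G-link, so concatenating gives that $I$ is G-linked to a complete intersection generated by linear forms.

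The main obstacle I anticipate is the verification that the geometric vertex decomposition arising in the weakly geometrically vertex decomposable inductive step is nondegenerate and that $R/I$ is equidimensional, so that the clean module isomorphism $I/N_{y,I} \cong [C_{y,I}/N_{y,I}](-1)$ is actually available rather than only some weaker statement; unmixedness of $I$ gives equidimensionality, but one must rule out the degenerate configurations carefully and treat them by hand, and one must also confirm that $N_{y,I}$ being radical and Cohen--Macaulay indeed delivers the $G_0$ hypothesis needed in \cref{def:Gbiliaison}. A secondary subtlety is bookkeeping with the ambient ring: ensuring that passing between $R$ and $\kappa[x_1,\dots,\widehat{y},\dots,x_n]$ (contraction versus extension of ideals) does not disturb saturatedness, heights, or the G-biliaison data, which is routine but needs to be stated cleanly.
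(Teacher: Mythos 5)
This statement is not proved in the present paper---it is cited directly as \cite[Theorem 4.4]{KR21}---so there is no in-paper proof to compare against. That said, your reconstruction is sound and matches the strategy the paper's surrounding context indicates for the cited result: induction on the ambient ring using the recursive structure of \cref{def:gvd}, with the key step being the implication ``(nondegenerate) geometric vertex decomposition $\Rightarrow$ elementary G-biliaison of shift $1$,'' which this paper itself invokes as \cite[Corollary 4.3]{KR21} in the proof of \cref{thm:polarLink}. Your side remarks are also correct: a radical ideal is generically reduced, and a zero-dimensional reduced local ring is a field, hence Gorenstein, so radical plus unmixed does give $G_0$; and the height equalities $\hgt(C_{y,I}) = \hgt(N_{y,I})+1 = \hgt(I)$ needed for \cref{def:Gbiliaison} follow from unmixedness together with \cite[Lemma 2.8]{KR21}, as recorded after \cref{def:gvd}. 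The one place you correctly flag as delicate---the degenerate cases $C_{y,I}=(1)$ or $\sqrt{C_{y,I}}=\sqrt{N_{y,I}}$---does need more than hand-waving in a complete write-up (in the first case $y$ appears linearly in some element of the reduced Gr\"obner basis and can be eliminated; in the second, $N_{y,I}\subseteq C_{y,I}$ together with both ideals being radical forces $N_{y,I}=C_{y,I}$, and a Hilbert-series comparison then gives $I=N_{y,I}$, so the induction passes directly to the smaller ring), but the overall architecture of your argument is the right one.
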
 

\begin{example}
    Again with $I = (x_{11}x_{22}-x_{12}x_{21}, x_{11}x_{23}-x_{13}x_{21}, x_{12}x_{23}-x_{22}x_{13})$, as in \cref{ex:gvd}, the corresponding elementary G-biliaison is \[
    I/(x_{12}x_{23}-x_{22}x_{13}) \cong [(x_{22},x_{23})/(x_{12}x_{23}-x_{22}x_{13})](-1),
    \] where the isomorphism is given by multiplication by $x_{22}/(x_{11}x_{22}-x_{12}x_{21})$.  The isomorphism may also, for example, be described as multiplication by $x_{23}/(x_{11}x_{23}-x_{13}x_{21})$, as the two fractions are equivalent modulo $(x_{12}x_{23}-x_{22}x_{13})$.

    This elementary G-biliaison is induced from the geometric vertex decomposition described in \cref{ex:gvd}.
\end{example}

\section{Geometric polarization and Gorenstein liaison}
\label{sect:geoPol}

\subsection{Definitions and basic properties}

In this subsection, we give a definition of polarizations for Gr\"obner bases of arbitrary ideals in a polynomial ring and prove basic results in analogy with polarization of a monomial ideal.  With an eye towards passing information about elementary G-biliaisons between an ideal and a geometric polarization of it (i.e., an ideal generated by the polarization of one of its Gr\"obner bases), our primary interest will be in deciding when the polarization of a Gr\"obner basis of a homogeneous ideal is again a Gr\"obner basis (\cref{thm:inducedGB}).  We begin by defining terms and giving examples and then show (in \cref{prop:redSuffices}) that the reduced Gr\"obner basis of an ideal with respect to a fixed term order is the appropriate object of study in this context.

\begin{defn}\label{def:geoPol}
Fix a variable $y = x_j$ of $R = \kappa[x_1, \ldots, x_n]$.  For $g \in R$, write $g=\sum_{i=0}^t y^ir_i$, $r_i \in R$, where $r_t \neq 0$ and $y$ does not divide any term of any $r_i$.  Using a new variable $y'$, define $$\P_y(g) = r_0+yr_1+\sum_{i=2}^t y(y')^{i-1}r_i \in R[y'].$$ For an ideal $I$, term order $<$, and $<$-Gr\"obner basis $\G$, define $\P_y(\G)=\{\P_y(g) \colon g \in \G\}$.  We call $\P_y(\G)$ the \bf{one-step geometric polarization of $\G$ with respect to $y$}. 
\end{defn}

Note that $(\G, y-y') = (\P_y(\G), y-y')$.  Note also that, if $I$ is a monomial ideal and $\G$ is its set of minimal monomial generators, a sequence of one-step polarizations ultimately yields the minimal monomial generators for the polarization of $I$ (in the usual sense, i.e., that of \cref{def:polarization}).  

For $0 \neq f \in R$, let $\deg_y(f)$ denote the greatest power of $y$ that divides at least one term of $f$. In particular, $\deg_y(f) = 0$ if no term of $f$ is divisible by $y$.  Define $\deg_y(0) = -\infty$.

\begin{notation}\label{not:indOrder} Let $R = \kappa[x_1, \ldots, x_n]$, fix a variable $y$ of $R$, let $y'$ denote a new variable, and let $R' = R[y']$.  Fix a term order $<$ on $R$.  If $\mu$ is a monomial in $R'$, let $\depol(\mu)$ denote the monomial of $R$ obtained by replacing in $\mu$ each $y'$ with a $y$. Define a term order $\prec$ on $R'$ in the following way: $\mu \prec \nu$ if $\depol(\mu)<\depol(\nu)$ or if $\depol(\mu)=\depol(\nu)$ and $\deg_{y}(\mu)<\deg_y(\nu)$.
\end{notation}

Polarization in the polynomial setting depends on a choice of Gr\"obner basis, not merely on an ideal.

\begin{example}\label{ex:redundant-gens-polarization-not-GB}
If $\G_1 = \{yt-z^2\}$ and $\G_2 = \{yt-z^2, y^2t-yz^2\}$, then $\P_{y}(\G_1) = \{yt-z^2\}$ while $\P_{y}(\G_2) = \{yt-z^2,yy't-yz^2\}$.  Although the second element of $\G_2$ is merely a multiple of the first, $(\P_{y}(\G_2)) \not\subseteq (\P_{y}(\G_1))$, and $(\P_{y}(\G_2))$ is not a principal ideal.
\end{example}

\begin{example}\label{ex:termOrderDependent}
    Consider $I = (y^2x,y^2z,yw-yz+t^2) \subset \QQ[t,w,x,y,z]$.  If $<_1$ is the lexicographic term order determined by $t<z<w<x<y$, then the reduced $<_1$-Gr\"obner basis of $I$ is \[
    \G_1 = \{xt^4+zt^4, yw-yz+t^2, yxt^2+yzt^2, y^2x+y^2z\},
    \] and \[
    \P_y(\G_1) = \{xt^4+zt^4, yw-yz+t^2, yxt^2+yzt^2, yy'x+yy'z\}.
    \] Meanwhile, if $<_2$ is the lexicographic order determined by $t<x<w<z<y$, then the $<_2$-Gr\"obner basis of $I$ is 
    \[
    \G_2 = \{zt^4+xt^4, ywt^2+yxt^2+t^4, yz-yw-t^2, y^2w+y^2x+yt^2\},
    \] and \[
    \P_y(\G_2) = \{zt^4+xt^4, ywt^2+yxt^2+t4 yz-yw-t^2, yy'w+yy'x+yt^2\}.
    \] Then $(\P_y(\G_1)) \neq (\P_y(\G_2))$.  For example, $y'wt^2+y'xt^2+t^4 \in (\P_y(\G_2)) \setminus (\P_y(\G_1))$. 
    
    In this example, neither $\P_y(\G_1)$ nor $\P_y(\G_2)$ forms a Gr\"obner basis under the induced term order of \cref{not:indOrder}.  Note also that both $<_1$ and $<_2$ are $y$-compatible term orders, which one would expect to behave relatively well with respect to polarization at $y$.  

        Computations for this example were performed in Macaulay2 \cite{M2}.
\end{example}

The above examples show that our definition of polarization requires a choice of term order and Gr\"obner basis.  For application to elementary G-biliaison, we will need to focus on Gr\"obner bases whose polarizations are again Gr\"obner bases. The following proposition shows that it suffices to restrict our consideration of polarizations to the (finitely many) reduced Gr\"obner bases of an ideal.

\begin{proposition}
    \label{prop:redSuffices}
    Let $I$ be an ideal of $R=\kappa[x_1, \ldots, x_n]$ and  $y$ a variable in $R$.  
    Suppose that $<$ is a $y$-compatible term order, that $\G$ is a $<$-Gr\"obner basis of $I$, and that $\G_{red}$ is the reduced $<$-Gr\"obner basis of $I$.  If $\P_y(\G)$ is a Gr\"obner basis under the induced order $\prec$ of \cref{not:indOrder}, then $\P_y(\G_{red})$ is a reduced $\prec$-Gr\"obner basis.
\end{proposition}
\begin{proof}
    Suppose there exists some $a \in \G_{red}$ and $\tilde{a} \in \G$ so that the leading term $\mu$ of $a$ divides some non-leading term $\nu$ of $\tilde{a}$. Assume that $a$ and $\tilde{a}$ have been chosen to maximize $\nu$ (with respect to $<$). 
    
    Set $\gamma = \nu/\mu$, and let $\widetilde{\G}$ be the set obtained from $\G$ by replacing $\tilde{a}$ by $\tilde{a}-\gamma a$.  Then $(\init_< (g) \colon g \in \G) = (\init_< (g) \colon g \in \widetilde{\G})$, which is to say that $\widetilde{\G}$ is again a Gr\"obner basis of $I$.  Moreover, $(\init_\prec(\P_y(g)): g \in \G) = (\init_\prec(\P_y(g)) \colon g \in \widetilde{\G})$ and $\P_y(\widetilde{\G}) = \P_y(\G)$, which is to say that $\P_y(\widetilde{\G})$ is again a $\prec$-Gr\"obner basis.  
    
    Now $\kappa$-multiples of $\nu$ occur strictly fewer times among the non-leading terms of elements of $\widetilde{\G}$ than they do among the non-leading terms of $\G$. If no $\kappa$-multiple of $\nu$ occurs among the non-leading terms of elements $\widetilde{\G}$, then the greatest among non-leading terms of $\widetilde{\G}$ divisible by a leading term of an element of $\G$ is less than $\nu$.  By Gr\"obner induction, we may now assume that no non-leading term of any element of $\G$ is divisible by the leading term of any element of $\G_{red}$, which forces $\G_{red} \subseteq \G$.

    We proceed by induction on $|\G|-|\G_{red}|$.  The case $|\G|-|\G_{red}|=0$ is trivial.  If $|\G|-|\G_{red}|>0$, choose $f \in \G \setminus \G_{red}$, and set $\G' = \G \setminus \{f\}$.  That is, $\G=\G' \cup \{f\}$.  
    
  Suppose first that the leading term of $f$ is divisible by the leading term of some $h \in \G$ that does not involve $y$.  Then $\P_y(h) = h$ and $\init_<(h) = \init_\prec(h)$, and so $\init_\prec(f) \in (\init_\prec(g) \colon g \in \P_y(\G))$.  Hence, $(\init_\prec(g) \colon g \in \P_y(\G')) = (\init_\prec(g) \colon g \in \P_y(\G))$.
    
    Alternatively, if the leading term of $f$ is not divisible by the leading term of some $h \in \G$ that does not involve $y$, then, because $\G'$ is a Gr\"obner basis, the leading term of $f$ must be divisible by the leading term of some element of the form $yd+r \in \G'$, where no term of $r$ is divisible by $y$.  Write $f = \alpha(yd+r)+\beta$ where $\init_<(f) = y \init_<(\alpha)\init_<(d)$. Then \[
    \init_{\prec}(\P_y(f)) \in (\init_{\prec}(\P_y(yd))) = (\init_{\prec}(\P_y(yd+r))) \subseteq (\init_{\prec}(g) \colon g \in \P_y(\G')).
    \] Thus, $(\init_\prec(g) \colon g \in \P_y(\G')) = (\init_\prec(g) \colon g \in \P_y(\G))$, and so $\P_y(\G')$ is a Gr\"obner basis because $\P_y(\G)$ is.  Because $|\G'|-|\G_{red}| < |\G|-|\G_{red}|$, this completes the proof  of the claim that $\P_y(\G_{red})$ is a Gr\"obner basis.

To see that $\P_y(\G_{red})$ is reduced, suppose there exist distinct elements $g, g' \in \P_y(\G_{red})$ so that $\mu = \init_\prec(g)$ divides some term $\nu$ of $g'$.  Then $\depol(\mu) = \init_<(\depol(g))$ divides $\depol(\nu)$, which is a term of $\depol(g')$.  Because  $\depol(g)$ and $\depol(g')$ are, by the construction of $\P_y(\G_{red})$, distinct elements of $\G_{red}$, this relationship contradicts the assumption that $\G_{red}$ is the reduced $<$-Gr\"obner basis of $I$.
\end{proof}

\begin{remark}\label{rem:polarization-generates-same-ideal-as-reducedGBpolarization}
    Under the hypotheses and with the notation of \cref{prop:redSuffices}, we see from the proof that $(\P_y(\G)) = (\P_y(\hat{\G}))$ for some $<$-Gr\"obner basis $\hat{\G} \supseteq \G_{red}$. It follows that $(\P_y(\G)) \supseteq  (\P_y(\G_{red}))$. We do not yet have enough information to argue that indeed $(\P_y(\G)) = (\P_y(\G_{red}))$, though, after proving \cref{thm:inducedGB}, we will.  We saw in \cref{ex:redundant-gens-polarization-not-GB} that, without the assumption that $\P_y(\G)$ forms a Gr\"obner basis, it may be that $(\P_y(\G)) \neq (\P_y(\G_{red}))$, even if $\P_y(\G_{red})$ does form a Gr\"obner basis.
\end{remark}

Suppose that $I$ is a monomial ideal and that $\G = G(I)$ is its minimal monomial generating set. Much of the theory of polarization in this traditional monomial setting rests on the fact that $y-y'$ is not a zerodivisor on $R/I$ or on $R'/(\P_y(\G))$. We will see that the same is true in the setting of geometric polarization.  The next theorem shows that $y-y'$ not a zerodivisor on $R/I$ or on $R'/(\P_y(\G))$ is precisely the condition under which the polarization of a Gr\"obner basis is again a Gr\"obner basis, under the induced order described in \cref{not:indOrder}.

As usual, let $R = \kappa[x_1, \ldots, x_n]$, equipped with the standard grading. Let $\HS_M(t)$ denote the Hilbert series of the graded $R$-module $M$.

\begin{theorem}\label{thm:inducedGB}
    Fix a homogeneous ideal $I$ of $R = \kappa[x_1, \ldots, x_n]$, a variable $y$ of $R$, a new variable $y'$, a $y$-compatible term order $<$, and a $<$-Gr\"obner basis $\G$ of $I$.  Then $\P_y(\G)$ is a $\prec$-Gr\"obner basis of $(\P_y(\G))$ if and only if $y-y'$ is a nonzerodivisor on $R[y']/(\P_y(\G))$.

    Furthermore, if $\G$ is a reduced Gr\"obner basis and $\P_y(\G)$ a Gr\"obner basis, then it is reduced as well.
\end{theorem}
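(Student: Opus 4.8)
The plan is to prove both equivalences at once by routing everything through Hilbert series, as the setup immediately before the statement anticipates: I will show that each of the two conditions in the statement is equivalent to the single identity $\text{H}_{R[y']/(\mathcal{P}_y(\mathcal{G}))}(t) = \frac{1}{1-t}\,\text{H}_{R/I}(t)$.

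Write $R' = R[y']$, $J = (\mathcal{P}_y(\mathcal{G}))$, and let $L = (\init_\prec(h) : h \in \mathcal{P}_y(\mathcal{G}))$ be the ideal generated by the $\prec$-leading terms of $\mathcal{P}_y(\mathcal{G})$. The first step is the identity $\init_\prec(\mathcal{P}_y(g)) = \mathcal{P}_y(\init_<(g))$ for every $g \in R$: the operation $\mathcal{P}_y$ is an injection from the monomials of $R$ to the monomials of $R'$ with $\operatorname{depol}\circ \mathcal{P}_y = \mathrm{id}$, so it restricts to a bijection from the terms of $g$ to the terms of $\mathcal{P}_y(g)$, and this bijection is order preserving from $<$ to $\prec$ because $\prec$ refines the order ``compare depolarizations'' (\cref{not:indOrder}). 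Since $\mathcal{P}_y$ also respects divisibility of monomials, it follows that $L$ is exactly the one-step polarization at $y$ of the monomial ideal $\init_<(I)$; in particular $L$ is a monomial ideal that does not depend on the choice of $\mathcal{G}$, and $L \subseteq \init_\prec(J)$, with equality precisely when $\mathcal{P}_y(\mathcal{G})$ is a $\prec$-Gr\"obner basis of $J$.

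I would then assume $\mathcal{G}$ homogeneous, so that $J$ and all the rings below are standard graded, and record three facts. First, one-step polarization at $y$ of a monomial ideal is a partial polarization in the sense of \cref{def:polarization} (polarization vector $2$ in the $y$-slot, $1$ elsewhere), so the properties of polarization recalled after \cref{def:polarization} give that $y-y'$ is a nonzerodivisor on $R'/L$ with $R'/(L+(y-y')) \cong R/\init_<(I)$; combined with the invariance of Hilbert series under passage to an initial ideal this gives $\text{H}_{R'/L}(t) = \frac{1}{1-t}\,\text{H}_{R/\init_<(I)}(t) = \frac{1}{1-t}\,\text{H}_{R/I}(t)$. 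Second, since $\mathcal{P}_y(g) \equiv g \pmod{y-y'}$ for every $g$, we have $J + (y-y') = IR' + (y-y')$, hence $R'/(J+(y-y')) \cong R/I$ via $y' \mapsto y$, so $\text{H}_{R'/(J+(y-y'))}(t) = \text{H}_{R/I}(t)$. Third, the exact sequence $0 \to ((J:(y-y'))/J)(-1) \to (R'/J)(-1) \xrightarrow{\,\cdot(y-y')\,} R'/J \to R'/(J+(y-y')) \to 0$ gives, using the second fact, the identity $\text{H}_{R/I}(t) = (1-t)\,\text{H}_{R'/J}(t) + t\,\text{H}_{(J:(y-y'))/J}(t)$.

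Now I would chase the equivalences. On the one hand, since $L \subseteq \init_\prec(J)$ are monomial ideals and $\text{H}_{R'/J} = \text{H}_{R'/\init_\prec(J)}$, the equality $\init_\prec(J) = L$ --- equivalently, $\mathcal{P}_y(\mathcal{G})$ being a $\prec$-Gr\"obner basis --- holds if and only if $\text{H}_{R'/J}(t) = \text{H}_{R'/L}(t)$, i.e.\ (by the first fact) if and only if $(1-t)\,\text{H}_{R'/J}(t) = \text{H}_{R/I}(t)$. On the other hand, the third fact shows $(1-t)\,\text{H}_{R'/J}(t) = \text{H}_{R/I}(t)$ if and only if $t\,\text{H}_{(J:(y-y'))/J}(t)=0$, i.e.\ $(J:(y-y'))/J = 0$ (a graded module with vanishing Hilbert series is zero), i.e.\ $y-y'$ is a nonzerodivisor on $R'/J$; chaining these gives the asserted equivalence. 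For possibly inhomogeneous $\mathcal{G}$ one runs the same bookkeeping with the filtration of $R'$ by total degree in place of the grading (here exactness at the level of filtered pieces needs a little extra care), and in the intended applications $\mathcal{G}$ may in any case be taken homogeneous; the concluding assertion of the theorem is the special case $\mathcal{G} = \mathcal{G}_{red}$ of \cref{prop:redSuffices}. I expect the main friction to be in the third fact and the chase around it --- keeping the degree shift in the exact sequence straight and, at each ``if and only if'', upgrading an equality of Hilbert series to the corresponding equality of monomial ideals or vanishing of a submodule --- together with stating the inhomogeneous case precisely.
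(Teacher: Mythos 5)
Your proposal is correct and follows essentially the same path as the paper's proof: identify the ideal of polarized leading terms as a partial polarization of $\init_<(I)$, compare Hilbert series (using that polarization preserves them after adjusting for the extra variable), and extract the equivalence from the four-term exact sequence for multiplication by $y-y'$ on $R'/(\mathcal{P}_y(\mathcal{G}))$. The two small places where you go further than the paper — stating and proving $\init_\prec(\mathcal{P}_y(g)) = \mathcal{P}_y(\init_<(g))$ explicitly via the order-preserving injection on monomials, and identifying the kernel in the exact sequence as $((J:(y-y'))/J)(-1)$ rather than calling it $\ker$ — are clarifications rather than departures, and your handling of the reducedness claim by appeal to \cref{prop:redSuffices} matches the paper. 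One point worth keeping in mind: the paper's Hilbert-series computation is written down as if the quotients are graded, so implicitly $\mathcal{G}$ (and hence $\mathcal{P}_y(\mathcal{G})$) is being taken to consist of homogeneous elements; you flagged exactly this, and your remark that the reduced Gr\"obner basis of a homogeneous ideal is automatically homogeneous, combined with \cref{prop:redSuffices}, is a clean way to dispose of the concern.
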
 
\begin{proof}
Set $R' = R[y']$.   Write \[
\G = \{yd_1+r_1, \ldots, yd_k+r_k, h_1, \ldots, h_\ell\},
    \] where $y$ does not divide any term of any $r_j$ or any $h_j$.  For each $i \in [k]$, let $d_i'$ be the polynomial obtained from $d_i$ by writing a $y'$ in place of each $y$.  Set \[
I' = (\P_y(\G)) = (yd'_1+r_1, \ldots, yd'_k+r_k, h_1, \ldots, h_\ell).
    \] The natural map given by $y' \mapsto y$ gives a graded isomorphism $R'/(I'+(y-y')) \cong R/I$.  Let \[
J = \init_<(I) = (\init_<(yd_1), \ldots, \init_<(yd_k), \init_<(h_1), \ldots, \init_<(h_\ell))
\] and \[
J' = (\init_{\prec}(yd'_1), \ldots, \init_{\prec}(yd'_k), \init_{\prec}(h_1), \ldots, \init_{\prec}(h_\ell)).
\] Note that $J'$ is a partial polarization of $J$.  

 Clearly, $J' \subseteq \init_{\prec}(I')$. Equality occurs if and only if $\HS_{R'/J'}(t)=\HS_{R'/\init_{\prec}(I')}(t)$.  We now compute each of these Hilbert series.  Equalities $\HS_{R'/J'}(t)=\HS_{R'/JR'}(t)=\HS_{R'/IR'}(t)$ hold since $J'$ is a partial polarization of $J$ and $J=\init_<(I)$. Moreover the decomposition $R'/IR'=R/I\otimes_\kappa \kappa[y']$ yields $\HS_{R'/IR'}(t)=\HS_{R/I}(t)\cdot \HS_{\kappa[y']}(t)=\HS_{R/I}(t)/(1-t)$. Putting this together we have
\begin{equation}\label{eq:5.1}
    \HS_{R'/J'}(t)= \HS_{R'/\init_{\prec}(I')}(t)  
    \iff \frac{\HS_{R/I}(t)}{1-t}=\HS_{R'/I'}(t)  
    \iff  \HS_{R/I}(t)=(1-t)\HS_{R'/I'}(t).
\end{equation}
Restricting the exact sequence \[
0 \rightarrow \ker \rightarrow R'/I' (-1) \xrightarrow{y-y'} R'/I' \rightarrow R'/(I'+(y-y')) \rightarrow 0,
\] where $\ker$ denotes the kernel of the multiplication by $y-y'$ map, to each degree $t$ gives rise to the exact sequence of finite-dimensional vector spaces 
\[
0 \rightarrow [\ker]_t \rightarrow [R'/I' (-1)]_t \xrightarrow{y-y'} [R'/I']_t \rightarrow [R'/(I'+(y-y'))]_t \rightarrow 0.
\]
It follows that \[
\HS_{\ker}(t)+\HS_{R'/I'}(t)=\HS_{R'/I'(-1)}(t)+\HS_{R'/(I'+(y-y'))}(t).
\]
Using the identity $\HS_{R'/I'(-1)} = t\HS_{R'/I'}$ and the isomorphism $R'/(I'+(y-y')) \cong R/I$, we combine like terms to obtain

\begin{equation}\label{eq:5.2}
\HS_{\ker}(t) = \HS_{R/I}(t)-(1-t)\HS_{R'/I'}(t).
\end{equation}

Combining \cref{eq:5.1} and  \cref{eq:5.2} we see that $\HS_{R'/J'}(t)=\HS_{R'/\init_{\prec}(I')}(t)$ if and only if $\HS_{\ker}(t)=0$ if and only if $y-y'$ is a nonzerodivisor on $R'/I'$.

This final sentence of the theorem follows by the same argument given for reducedness in \cref{prop:redSuffices}.
\end{proof}

\begin{example}\label{ex:badGens}
    If $I = (y^2-xz, yr-st)$, then the reduced Gr\"obner basis of $I$ with respect to any $y$-compatible term order is \[
\G = \{y^2-xz,yr-st,yst-rxz,xzr^2-s^2t^2\}, 
    \] in which case \[
\P_y(\G) = \{yy'-xz,yr-st,yst-rxz,xzr^2-s^2t^2\}. 
    \] 
We first claim that $y-y'$ is a zerodivisor on $R'/\P_y(\G)$.  Indeed, in $R'/(\P_y(\G))$ \[
st(y-y') = sty-sty' = rxz-ryy' = r(xz-yy')=0, 
    \] although $st \neq 0$ because $st$ is a degree $2$ form that is not in the $\kappa$-span of $\{yy'-xz,yr-st\}$.

We next claim that $\P_y(\G)$ is not a Gr\"obner basis under the induced term order of \cref{not:indOrder}, as predicted by \cref{thm:inducedGB}. Note that $y'st-rxz \in (\P_y(\G))$ although $y'st$ is not divisible by $yy'$,$yr$,$yst$, $xzr^2$, or $s^2t^2$. (Depending on the $y$-compatible term order $<$, either of $xzr^2$, or $s^2t^2$ may be the leading term of $xzr^2-s^2t^2$.)
\end{example}

\begin{remark}We return to the setting of \cref{prop:redSuffices} and \cref{rem:polarization-generates-same-ideal-as-reducedGBpolarization}.  By \cref{thm:inducedGB}, $y-y'$ is a nonzerodivisor on both $R'/\P_y(\G)$ and $R'/\P_y(\G_{red})$.  Having established that $(\P_y(\G_{red})) \subseteq (\P_y(\G))$ and noting that $(\P_y(\G_{red}), y-y') = (\P_y(\G), y-y')$, a routine Hilbert function computation shows that $(\P_y(\G_{red})) = (\P_y(\G))$.  One interpretation of this result is that (for a fixed choice of algebra generators of $R$) there are only finitely many ideals one might want to call a polarization of a given ideal $I$, each one obtained from a reduced Gr\"obner basis of $I$.
\end{remark}

The next proposition extends some core facts about polarization from the monomial setting to the geometric setting and also includes a note about primality not relevant in the monomial setting.

\begin{proposition}\label{prop:stFacts}
   Fix $R = \kappa[x_1, \ldots, x_n]$, a variable $y$ of $R$, and a new variable $y'$. Let $R' = R[y']$, where both $R$ and $R'$ are equipped with the standard grading. Suppose that $I$ is a homogeneous ideal, $<$ is a $y$-compatible term order, and $\G$ is a $<$-Gr\"obner basis of $I$. If $y-y'$ is not a zerodivisor on $R'/(\P_y(\G))$, then
\begin{enumerate}
    \item $\beta^R_{ij}(I) =\beta^{R'}_{ij} (\P_y(\G))$ for all i and j, where $\beta_{ij}$ denotes the $ij^{th}$ graded Betti number,
\item $H_{R/I}(t) = (1-t)H_{R'/(\P_y(\G))} (t)$,
\item $\hgt(I) = \hgt(\P_y(G))$,
\item $\pd_R(R/I) = \pd_{R'}(R'/(\P_y(\G)))$, where $\pd$ denotes projective dimension,
\item $\reg_R (R/I) = \reg_{R'}(R'/(\P_y(\G)))$, where $\reg$ denotes Castelnuovo--Mumford regularity,
\item $R/I$ is Cohen--Macaulay (respectively, Gorenstein) if and only if $R'/(\P_y(\G))$ is Cohen--Macaulay (respectively, Gorenstein), and 
\item if $I$ is a prime ideal of $R$, then $(\P_y(\G))$ is a prime ideal of $R'$.
\end{enumerate}
\end{proposition}
\begin{proof}

    Statement (1) follows from the well-known fact that if $M$ is an $R'$ module and $u\in R'$ is a regular element on both $R'$ and $M$, then there are degree-preserving isomorphisms $\Tor^{R'}_i(M,\kappa)\cong \Tor^{R'/(u)}_i(M/(u),\kappa)$ for all $i\geq 0$, see  \cite[Corollary 20.4]{Peeva}. We apply this for $u=y-y'$ and $M=R'/(\P_y(\G))$, obtaining that $ \beta^{R'}_{ij} (R'/(\P_y(\G)))=\beta^R_{ij}(R/I)$ upon identifying  $R'/(u)$ as $R$ and $R'/(\P_y(\G)), y-y')$ as $R/I$.

    One can apply \cite[Theorem 20.2]{Peeva} to obtain statement (2). Statement (3) follows from (2) as the Hilbert function determines the dimension of the respective rings. 
    Statements (4)--(6) follow from (1) as the Betti numbers determine the depth, projective dimension, regularity, and the last Betti number determines the Gorenstein property in the presence of Cohen-Macaulayness.

   (7)  Suppose that $I$ is a prime ideal of $R$.  Suppose for contradiction that there are homogeneous elements $r, s \in R' \setminus I'$ with $rs \in I'$.  Assume that $r$ and $s$ have been chosen so that $\deg(r)+\deg(s)$ is as small as possible. 
 Consider the map $f:R' \rightarrow R$ determined by $y' \mapsto y$, i.e., the depolarization map.  Then $f(I') = I$, and so $f(r)f(s) = f(rs) \in I$.  Because $I$ is prime, we may assume without loss of generality that $f(r) \in I$.  Then $r \in f^{-1}(I)= I'+(y-y')$.  Write $r = a+v(y-y')$ for homogeneous elements $v \in R'$ and $a \in I'$.  Then $as+v(y-y')s = (a+v(y-y'))s = rs \in I'$, and $as \in I'$, and so $v(y-y')s \in I'$.  By assumption, $y-y'$ is a nonzerodivisor on $R/I'$, and so $vs \in I'$.   But $\deg(v)+\deg(s) = \deg(r)+\deg(s)-1$, and so, but the assumption of minimality of degree and the supposition $s \notin I'$, we must have $v \in I'$.  But then, $r = a+v(y-y')$ with $a, v \in I'$, and so we conclude $r \in I'$.
\end{proof}
  
\begin{example}
The assumption that $y-y'$ is a nonzerodivisor on $R'/(\P_y(\G))$ is required for \cref{prop:stFacts}. Returning to \cref{ex:badGens}, $\hgt(I) = 2$, but $\hgt(\P_y(\G)) = 3$.

Separately, let $R = \QQ[y,x,z,r,s,t]$ equipped with lexicographic order $<$ extending $t<s<r<z<x<y$, and let $J = (y^2z+yt^2+s^3,ys+t^2)$.  Then the reduced $<$-Gr\"obner basis of $J$ is \[
\H = \{y^2z+yt^2+s^3, yzt^2-s^4+t^4,ys+t^2, zt^4+s^5-st^4\},
\] and \[
\P_y(\H) = \{yy'z+yt^2+s^3, yzt^2-s^4+t^4,ys+t^2, zt^4+s^5-st^4\}.
\] Now $J$ has height $2$ and defines a Cohen--Macaulay quotient while $(\P_y(\H))$ has height $3$ and does not. As required by \cref{prop:stFacts}, $y-y'$ is a zerodivisor on $R[y']/(\P_y(\H))$.

Computations in this example were performed in Macaulay2 \cite{M2}.
\end{example}

    One cannot recover in the geometric setting all of the standard facts about polarization in the monomial setting.  In particular, geometric polarization need not preserve containment, and the associated primes of $\P_y(\G)$ need not be the polarizations or relabelings of the associated primes of $(\G)$.

\begin{example}
  Consider $(y^2-z^2) = (y-z) \cap (y+z)$.  Then $(\P_y \{y^2-z^2\}) = (yy'-z^2)$, which is a prime ideal and which is not contained in either $(y-z)$ or $(y+z)$.  Note that this failure occurs even though $y-y'$ is a nonzerodivisor on $R'/(yy'-z^2)$.  This example also shows that the converse to \cref{prop:stFacts}(4) is false.
 \end{example} 

However, we can give a partial converse to \cref{prop:stFacts}(1):

\begin{lemma}
Let $R = \kappa[x_1, \ldots, x_n]$, fix a variable $y$ of $R$, let $y'$ denote a new variable, and let $R' = R[y']$, where both $R$ and $R'$ are equipped with the standard grading. Suppose that $I$ is a homogeneous ideal, $<$ is a $y$-compatible term order, and $\G$ is a $<$-Gr\"obner basis of $I$.  If $(\P_y(\G))$ is unmixed and $\hgt(I)=\hgt(\P_y(\G))$,  then $y-y'$ is not a zerodivisor on $R'/(\P_y(\G))$.
\end{lemma}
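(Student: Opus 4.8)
The plan is to argue by contradiction, reusing the height bookkeeping from the proof of \cref{prop:stFacts}(1) together with the unmixedness hypothesis. Write $I' = (\mathcal{P}_y(\mathcal{G}))$, regarded inside $R' = R[y']$; I may assume $I$ is a proper ideal, for otherwise some element of $\mathcal{G}$ is a unit, whence $I' = R'$ and the conclusion holds vacuously.

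First I would recall the two facts established at the beginning of the proof of \cref{prop:stFacts}(1), neither of which uses any nonzerodivisor hypothesis: $IR' + (y-y') = I' + (y-y')$ (which is immediate from the identity $(\mathcal{G},y-y') = (\mathcal{P}_y(\mathcal{G}),y-y')$ noted after \cref{def:geoPol}), and $y-y'$ is a nonzerodivisor on $R'/IR'$ because $I$ is extended from $R$. The second fact gives $\hgt\big(IR' + (y-y')\big) = \hgt(IR') + 1 = \hgt(I) + 1$, and hence, by the first, $\hgt\big(I' + (y-y')\big) = \hgt(I) + 1$. Moreover, as noted in the proof of \cref{thm:inducedGB}, $R'/(I' + (y-y')) \cong R/I$ is nonzero, so $I' + (y-y')$ is a proper ideal and this height is finite. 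Feeding in the hypothesis $\hgt(I) = \hgt(I')$ then turns the last display into $\hgt\big(I' + (y-y')\big) = \hgt(I') + 1$.

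The contradiction is then immediate. If $y-y'$ were a zerodivisor on $R'/I'$, it would lie in some associated prime $P$ of $I'$; since $I'$ is unmixed, $\hgt(P) = \hgt(I')$, while $I' + (y-y') \subseteq P$ forces $\hgt\big(I' + (y-y')\big) \le \hgt(P) = \hgt(I')$, contradicting the equality just obtained. Hence $y-y'$ is a nonzerodivisor on $R'/I'$. I do not anticipate a real obstacle: this is a short height computation. The only point that wants care is verifying that the identity $\hgt(I' + (y-y')) = \hgt(I) + 1$ does not covertly rely on $y-y'$ being regular modulo $I'$ — and it does not, since it only invokes regularity of $y-y'$ modulo $IR'$, which holds automatically because $I$ is defined over $R$.
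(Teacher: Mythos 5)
Your proof is correct and follows essentially the same route as the paper's: use the identity $IR'+(y-y') = (\mathcal{P}_y(\mathcal{G}))+(y-y')$, compute $\hgt(I'+(y-y')) = \hgt(I')+1$ via the hypothesis $\hgt(I)=\hgt(I')$, and then conclude from unmixedness that $y-y'$ avoids every associated prime of $I'$. The only difference is presentational — you spell out the contradiction and flag explicitly that the height step does not covertly use regularity of $y-y'$ modulo $I'$, whereas the paper states the same argument more tersely.
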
 
\begin{proof}

    The identity
    $
    I+(y-y')=(\P_y(\G))+(y-y')
    $
    yields 
    \[
    \hgt\left((\P_y(\G)+(y-y')\right)=\hgt( I+(y-y'))=\hgt(I)+1>\hgt(I)=\hgt\left(\P_y(\G)\right).
    \]
Therefore $y-y'$ is not in any associated prime of $R'/(\P_y(\G))$ of height equal to the height of $(\P_y(\G))$. Since this ideal is unmixed, the conclusion follows.
\end{proof}

We end this subsection with a class of Gr\"obner bases whose polarizations are Gr\"obner bases under the induced term order of  \cref{not:indOrder}. If $G = \{g_i \colon i \in [r]\}$ is a set of polynomials in the ring $\kappa[x_1, \ldots, x_n]$ and $<$ a term order, recall that an expression $f = \sum_{i=1}^r f_i g_i+f'$ is called a \textbf{standard expression} for $f$ in terms of $G$ if no monomial of $f'$ is in the ideal $(\init_<(g_i) \colon i \in [r])$ and $\init_<(f) \geq \init_<(f_i g_i)$ for all $i \in [r]$. We call any such $f'$ a \textbf{remainder} of $f$ upon division by $G$.  For monic polynomials $g$ and $h$, the \textbf{$s$-polynomial} of $g$ and $h$ is defined to be \[
s(g,h) = \dfrac{\init_<(g)f}{\gcd(\init_<(f),\init_<(g))}-\dfrac{\init_<(f)g}{\gcd(\init_<(f),\init_<(g))}.
\]
Buchberger introduced Gr\"obner bases and, as a means to compute them algorithmically, $s$-polynomials in his 1965 PhD thesis \cite{Buc65}. He showed that, if the remainder of the $s$-polynomial of every pair of elements of a set $G$, upon division by $G$, is $0$, then $G$ is a Gr\"obner basis for the ideal it generates.

\begin{proposition}\label{prop:uniform-deg-in-y-polarizes}
Fix a homogeneous ideal $I$ of $R = \kappa[x_1, \ldots, x_n]$, a variable $y$ of $R$, a new variable $y'$, a $y$-compatible term order $<$, and a $<$-Gr\"obner basis $\G$ of $I$. Suppose there is some integer $t$ such that $\deg_y (g) \in \{0,t\}$ for each $g\in \G$. Then $\P_y (\G)$ is a Gr\"obner basis under the induced term order $\prec$ on $R' = R[y']$.
\end{proposition}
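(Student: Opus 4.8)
The plan is to verify Buchberger's criterion for $\mathcal{P}_y(\mathcal{G})$ with respect to $\prec$, lifting the $S$-polynomial reductions available for $\mathcal{G}$ in $R$ to $S$-polynomial reductions of $\mathcal{P}_y(\mathcal{G})$ in $R' = R[y']$. (One could instead try to show directly that $y - y'$ is a nonzerodivisor on $R'/(\mathcal{P}_y(\mathcal{G}))$ and invoke \cref{thm:inducedGB}, but the Buchberger route seems more transparent.) One reduces immediately to the case $t \geq 2$: if $t \leq 1$ then $\mathcal{P}_y$ introduces no occurrence of $y'$, so $\mathcal{P}_y(\mathcal{G}) = \mathcal{G}$, and since $\prec$ restricts to $<$ on $R \subseteq R'$ and no generator involves $y'$, the set $\mathcal{G}$ is already a $\prec$-Gr\"obner basis.

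The central device is a $\kappa$-linear reinterpretation of $\mathcal{P}_y$. Let $V \subseteq R$ be the $\kappa$-span of the monomials of $y$-degree at most $t$, and let $\Phi \colon V \to R'$ be the $\kappa$-linear map determined on monomials by $y^i m \mapsto y(y')^{i-1} m$ for $i \geq 2$, by $y m \mapsto y m$, and by $m \mapsto m$, where $m$ runs over the monomials of $\kappa[x_1, \ldots, \widehat{y}, \ldots, x_n]$. Comparing with \cref{def:geoPol} shows $\Phi(f) = \mathcal{P}_y(f)$ for every $f \in V$. The point of $\Phi$ is that, although $\mathcal{P}_y$ is neither additive nor multiplicative, $\Phi$ is both $\kappa$-linear and $\kappa[x_1, \ldots, \widehat{y}, \ldots, x_n]$-linear on $V$. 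Two further facts I would record: first, since the underlying monomial substitution is injective, $\init_\prec(\Phi(f))$ is obtained from $\init_<(f)$ by the same substitution $y^i \mapsto y(y')^{i-1}$ ($i \geq 2$) --- in particular $\init_\prec(\mathcal{P}_y(g)) = y(y')^{t-1} \init_<(a_g)$ when $\deg_y(g) = t$ (writing $g = y^t a_g + \cdots$ with $a_g$ nonzero and $y$-free), while $\init_\prec(\mathcal{P}_y(g)) = \init_<(g)$ when $\deg_y(g) = 0$; second, because $<$ is $y$-compatible one has $\deg_y(f) = \deg_y(\init_<(f))$ for all $f$, so $\deg_y$ cannot increase under a leading-term reduction. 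The hypothesis that every $g \in \mathcal{G}$ has $\deg_y(g) \in \{0,t\}$ guarantees $\mathcal{G} \subseteq V$ and, via the previous sentence, that every $S$-polynomial $S(g,h)$ and every intermediate remainder in its division modulo $\mathcal{G}$ again lies in $V$, so $\Phi$ can be applied throughout.

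Fixing $g, h \in \mathcal{G}$, a short leading-term computation gives $S_\prec(\mathcal{P}_y(g), \mathcal{P}_y(h)) = \Phi(S_<(g,h)) = \mathcal{P}_y(S_<(g,h))$ (here $S_<$ and $S_\prec$ denote the $S$-polynomial with respect to $<$ and to $\prec$), using $\kappa[x_1,\ldots,\widehat{y},\ldots,x_n]$-linearity of $\Phi$ and the observation that if $g$ has $\deg_y(g) = t$ then the cofactor of $g$ in any such $S$-polynomial --- indeed in any reduction of an element of $V$ --- must be $y$-free, since $\init_<(g) = y^t \init_<(a_g)$ forces any monomial $c$ with $\deg_y\!\big(c\,\init_<(g)\big) \le t$ to be $y$-free. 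The heart of the argument is then that reduction lifts step by step: given the standard reduction $S_<(g,h) = p^{(0)} \to p^{(1)} \to \cdots \to p^{(N)} = 0$ modulo $\mathcal{G}$ (which exists since $\mathcal{G}$ is a Gr\"obner basis and $S_<(g,h) \in (\mathcal{G})$), each step $p^{(k+1)} = p^{(k)} - c\, g_j$ with $c\, \init_<(g_j) = \init_<(p^{(k)})$ has one of two shapes --- if $\deg_y(g_j) = t$ then $c$ is $y$-free and $\Phi(c\, g_j) = c\, \mathcal{P}_y(g_j)$; if $\deg_y(g_j) = 0$, writing $c = y^e c''$ with $c''$ $y$-free gives $\Phi(c\, g_j) = \Phi(y^e)\, c''\, \mathcal{P}_y(g_j)$ --- and in either case $\Phi(c\, g_j)$ is a monomial multiple of $\mathcal{P}_y(g_j)$ whose $\prec$-leading term equals $\init_\prec(\Phi(p^{(k)}))$, while $\Phi(p^{(k+1)}) = \Phi(p^{(k)}) - \Phi(c\, g_j)$ by linearity of $\Phi$ on $V$. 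Thus $\Phi$ carries the chain to a leading-term reduction $S_\prec(\mathcal{P}_y(g), \mathcal{P}_y(h)) = \Phi(p^{(0)}) \to \cdots \to \Phi(p^{(N)}) = 0$ modulo $\mathcal{P}_y(\mathcal{G})$. Hence every $S$-polynomial of $\mathcal{P}_y(\mathcal{G})$ reduces to zero, Buchberger's criterion applies, and the reducedness addendum follows exactly as in \cref{prop:redSuffices}.

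I expect the main difficulty to be purely the bookkeeping: $\mathcal{P}_y$ does not commute with the ring operations used in Buchberger's algorithm, so the whole argument hinges on confining every polynomial that appears to the subspace $V$ --- on which $\mathcal{P}_y$ agrees with the genuinely linear $\Phi$ --- and on the fact that reductions against the $y$-degree-$t$ members of $\mathcal{G}$ have $y$-free cofactors. Both of these are exactly what the hypothesis $\deg_y(g) \in \{0,t\}$, together with $y$-compatibility of $<$, supplies; without a uniform top $y$-degree, a reduction could produce a cofactor mixing powers of $y$ with the new variable $y'$ and the lifting would fail, consistent with the behavior in \cref{ex:redundant-gens-polarization-not-GB} and \cref{ex:termOrderDependent}.
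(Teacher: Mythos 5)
Your argument is correct and follows the same path as the paper's: both verify Buchberger's criterion by lifting $S$-polynomial reductions of $\mathcal{G}$ to reductions of $\mathcal{P}_y(\mathcal{G})$ (your step-by-step chain $p^{(0)}\to\cdots\to p^{(N)}$ is the same data as the paper's standard expression $s(g,h)=\sum c_i g_i$), with the key point in both being that $y$-compatibility plus the hypothesis $\deg_y(g)\in\{0,t\}$ force the cofactor of any $y$-degree-$t$ generator to be $y$-free, so that polarization commutes with the multiplications that occur. One minor remark: the $V$/$\Phi$ apparatus is unnecessary, since $\mathcal{P}_y$ as defined in \cref{def:geoPol} already acts monomial-by-monomial and hence is $\kappa$-linear (and $\kappa[x_1,\dots,\widehat{y},\dots,x_n]$-linear) on all of $R$ — what it lacks is only multiplicativity in the $y$-divisible direction, which your case analysis correctly handles; also, this proposition has no reducedness addendum, so that closing sentence belongs to \cref{thm:inducedGB}.
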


\begin{proof}
Let $g,h\in \G$. Without loss of generality, assume that $g$ and $h$ are monic.  We will consider the $s$-polynomial $s(\P_y(g),\P_y(h))$ and show that its remainder upon division by $\P_y (\G)$ is $0$.  To do this, we will first fix a standard expression \[
s(g,h) = c_1g_1+\cdots+c_rg_r 
\] for $s(g,h)$ in terms of $\G$ and show that \[
\P_y(s(g, h)) = 
s(\P_y(g),\P_y(h)) 
\qand 
\P_y(c_i g_i) = 
\P_y(c_i)\P_y(g_i) \qforall i \in [r],
\] from which it follows that \[ s(\P_y(g),\P_y(h))=\P_y(c_1)\P_y(g_1)+\cdots+\P_y(c_r)\P_y(g_r).
\]

In order to establish this claim, we will frequently make use of the observation that, if $u, v \in R$ and $\deg_y(u)=0$, then $\P_y(uv) = u \P_y(v) = \P_y(u)\P_y(v)$.  We consider three cases.

\noindent \textbf{Case 1:} Suppose first that $\deg_y(g) = \deg_y(h) = 0$. Then $\deg_y(c_ig_i) = 0$ for all $i \in [r]$, and so the claim follows from the above observation.

\noindent \textbf{Case 2:} Next suppose $\deg_y(g) = \deg_y(h) = t$. 
Then there are some monomials $m$ and $n$ such that 
$s(\P_y(g),\P_y(h)) = m\P_y(h) - n\P_y(g)$. 
By the assumption that $\deg_y(g) = \deg_y(h) = t$, we observe that neither $m$ nor $n$ is divisible by $y$ or $y'$. 
Consequently, 
\[
    s(\P_y(g),\P_y(h)) = m\P_y(h) - n\P_y(g)
     = \P_y(mh) - \P_y(ng)   = \P_y(s(g,h)).
\]
Now recall the standard expression
$s(g,h) = c_1g_1+\cdots + c_rg_r$ for $s(g,h)$ in terms of $\G$.
For each $i \in [r]$, if $g_i$ has a term divisible by $y$, then, by assumption, $\deg_y(g_i) = t$. Hence, because $\deg_y(c_ig_i) \leq \deg_y (s(g,h))\leq t$, we must have that $\deg_y(c_i) = 0$. 
Hence, for all $i \in [r]$, $\P_y(c_ig_i) = \P_y(c_i)\P_y(g_i)$ by our earlier observation. 

\noindent \textbf{Case 3:} Without loss of generality, assume $\deg_y(g) = t$ and $\deg_y(h) = 0$.
    Then there are monomials $m$ and $n$ such that 
$s(\P_y(g),\P_y(h)) = my(y')^{t-1}\P_y(h) - n\P_y(g)$,
where neither $m$ nor $n$ is divisible by $y$ or $y'$. Consequently, 
\[
    s(\P_y(g),\P_y(h)) = s(\P_y(g),h)= my'y^{t-1}h - n\P_y(g)
     = \P_y(my^t h) - \P_y(ng)   = \P_y(s(g,h)).
\]
The remainder of the claim in this third and final case follows via the argument from Case~2.

Having established that, for any $g, h \in \G$ and any standard expression \[
s(g,h) = c_1g_1+\cdots+c_rg_r,
\] we have \[
 s(\P_y(g),\P_y(h)) = \P_y(c_1)\P_y(g_1)+\cdots+\P_y(c_r)\P_y(g_r),
\] it remains only to show that \[
\init_\prec(s(\P_y(g),\P_y(h))) \geq \init_\prec(\P_y(c_i)\P_y(g_i)) \qforeach  i \in [r].
\] From the equality $s(\P_y(g),\P_y(h)) = \P_y(s(g, h))$, it follows that \[
\init_\prec(s(\P_y(g),\P_y(h))) = \init_\prec(\P_y(s(g, h))) = \P_y(\init_<(s(g,h))).
\] Similarly, from the equalities $\P_y(c_i)\P_y(g_i) = \P_y(c_ig_i)$ for each $i \in [r]$, it follows that \[
\init_\prec(\P_y(c_i)\P_y(g_i)) = \init_\prec(\P_y(c_ig_i)) = \P_y(\init_<(c_ig_i)).
\]
The assumption $\init_<(s(g,h)) \geq \init_<(c_ig_i)$ implies $\P_y(\init_<(s(g,h))) \geq \P_y(\init_<(c_ig_i))$.  Combining this information, we see $\init_\prec(s(\P_y(g),\P_y(h))) \geq \init_\prec(\P_y(c_i)\P_y(g_i))$ for all $i \in [r]$, which shows that $s(\P_y(g),\P_y(h))$ has remainder $0$ upon division by $\P_y(G)$.  Hence $\P_y(\G)$ forms a Gr\"obner basis.
\end{proof}

\begin{example}\label{ex:uniform-y-degree-polarizes}
    Consider the polynomial ring $\kappa[y,a,b,c,d]$ with the lexicographic monomial order determined by $y>a>b>c>d$ and the ideal $I = (y^2a+yd^2, y^2c+b^3, cd^2,b^3d^2,ab^3)$. Then the given generators form a Gr\"obner basis. Observe that each generator has $y$-degree $t=2$ or $0$. As required by \cref{prop:uniform-deg-in-y-polarizes}, the polarization of the generating set 
    \[
    \{y'ya+yd^2, y'yc+b^3, cd^2, b^3d^2, ab^3 \}
    \]
    forms a Gr\"obner basis of the lexicographic order with $y>y'>a>b>c>d$. 
\end{example}

\subsection{Geometric Polarization and Gorenstein Liaison }

In this subsection, we apply geometric polarization in the context of Gorenstein liaison.  Klein and Rajchgot \cite{KR21} gave a formula for producing an elementary G-biliaison from a geometric vertex decomposition of an ideal $I$ at a variable $y$, under suitable hypotheses.  A limitation of that result is that a geometric vertex decomposition requires $I$ to have a generating set that is linear in $y$.  The purpose of the next theorem is to relax that requirement by partially polarizing $I$ to produce an ideal that has a generating set that is linear in $y$, using the relationship between geometric vertex decomposition and G-biliaison to study the polarization, and then lifting the G-biliaison involving the polarization to a G-biliaison involving $I$.

\begin{theorem}\label{thm:polarLink}
    Let $R = \kappa[x_1, \ldots, x_n]$, and let $I$ be an unmixed and homogeneous proper ideal of $R$.  Fix $i \in [n]$, and write $y=x_i$.  Fix a $y$-compatible term order $<$ and a $<$-Gr\"obner basis \[
\G = \{yd_1+r_1, \ldots, yd_k+r_k, h_1, \ldots, h_\ell\}
    \] of $I$, where $y$ does not divide any term of any $r_j$ or any $h_j$.  Suppose that $y-y'$ is a nonzerodivisor on $R[y']/(\P_y(\G))$.  Set $D = (d_1, \ldots, d_k, h_1, \ldots, h_\ell)$ and $N = (h_1, \ldots, h_\ell)$, and let $D'$ be the ideal of $R[y']$ obtained from $D$ by the substitution $y \mapsto y'$.  Then the following are equivalent:
    \begin{enumerate}
        \item There is an elementary G-biliaison $(D/N)[-1] \cong I/N$, where the isomorphism is given by multiplication by any of the $yd_i+r_i/d_i$ for $i\in [k]$, all of which are equivalent modulo $N$;
        \item There is an elementary G-biliaison $(D'/NR[y'])[-1] \cong (\P_y(\G))/NR[y']$, where the isomorphism is given by multiplication by some $v'/d'$ with $v' \in (\P_y(\G))$ and $d' \in D'$ nonzerodivisors on $R[y']/NR[y']$ with $\init_y(v')/d' = y$;
        \item $(\P_y(\G))$ is unmixed and admits a nondegenerate geometric vertex decomposition at $y$, the geometric link is unmixed, and the geometric deletion is a Cohen--Macaulay and $G_0$ ideal.
    \end{enumerate}

    In this case,  be the fraction $yd_i+r_i/d_i$ is also represented modulo $N$ by $\depol(v')/\depol(d')$, in the notation of \cref{not:depolarization}.
    
\end{theorem}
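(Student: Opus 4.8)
The plan is to reduce the whole statement to the geometric vertex decomposition of the ideal $I' := (\mathcal{P}_y(\mathcal{G}))$ and then transport everything across the ``depolarization'' $R[y']/(y-y')\cong R$. First I would record the structural facts. Since $\mathcal{P}_y$ only alters powers $y^m$ with $m\ge 2$, a direct computation on $yd_j+r_j=\sum_m y^m s_{j,m}$ gives $\mathcal{P}_y(yd_j+r_j)=yd_j'+r_j$, so
\[
\mathcal{P}_y(\mathcal{G}) = \{\, yd_1'+r_1,\ \ldots,\ yd_k'+r_k,\ h_1,\ \ldots,\ h_\ell\,\}
\]
is linear in $y$. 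By \cref{thm:inducedGB}, the hypothesis that $y-y'$ is a nonzerodivisor on $R[y']/I'$ means $\mathcal{P}_y(\mathcal{G})$ is a $\prec$-Gr\"obner basis of $I'$; hence $I'$ admits a geometric vertex decomposition at $y$ whose geometric link is $C_{y,I'}=D'$ and whose geometric deletion is $N_{y,I'}=NR[y']$. I would also record the dictionary: $y-y'$ is a nonzerodivisor on each of $R[y']/I'$ (hypothesis), $R[y']/D'$, and $R[y']/NR[y']$ (the last two automatically, since those ideals have generators not involving $y$), contracting modulo $y-y'$ yields graded isomorphisms $R[y']/(I'+(y-y'))\cong R/I$, $R[y']/(D'+(y-y'))\cong R/D$, $R[y']/(NR[y']+(y-y'))\cong R/N$, and $D'$ (resp.\ $NR[y']$) is, up to renaming $y'$, the extension of $D$ (resp.\ $N$) to a polynomial ring, so Cohen--Macaulayness, the $G_0$ property, unmixedness, saturatedness, and height agree for $D'$ and $D$, and for $NR[y']$ and $N$.

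Granting this, the equivalence (2) $\Leftrightarrow$ (3) is the single-step correspondence between geometric vertex decomposition and elementary G-biliaison of \cite{KR21}, applied to $I'$ together with the decomposition just identified. For (3) $\Rightarrow$ (2) this is their construction: nondegeneracy together with $I'$ and $C_{y,I'}=D'$ unmixed and $N_{y,I'}=NR[y']$ Cohen--Macaulay and $G_0$ produces the elementary G-biliaison $(D'/NR[y'])[-1]\cong I'/NR[y']$ realized by multiplication by a fraction $v'/c'$ with $v'\in I'$, $c'\in D'$, and $\init_y(v')=yc'$. For (2) $\Rightarrow$ (3), unwinding \cref{def:Gbiliaison} immediately gives $I'$ and $D'$ unmixed and $NR[y']$ Cohen--Macaulay and $G_0$, while the height relation $\hgt(D')=\hgt(NR[y'])+1=\hgt(I')$, combined with the identification of $D'$ and $NR[y']$ as the geometric link and deletion and with \cite[Lemma 2.8]{KR21}, forces the decomposition to be nondegenerate.

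For (1) $\Leftrightarrow$ (2) I would pass across $y-y'$. In the direction (2) $\Rightarrow$ (1), because $y-y'$ is a nonzerodivisor on $R[y']/I'$ and on $R[y']/D'$, a routine $\operatorname{Tor}$ computation on $0\to I'/NR[y']\to R[y']/NR[y']\to R[y']/I'\to 0$ and its analogue for $D'$ gives $(I'/NR[y'])\otimes_{R[y']}R[y']/(y-y')\cong I/N$ and $(D'/NR[y'])\otimes_{R[y']}R[y']/(y-y')\cong D/N$ as graded $R/N$-modules; reducing the biliaison isomorphism of (2) therefore yields $(D/N)[-1]\cong I/N$, and the side conditions (heights, Cohen--Macaulay and $G_0$ for $R/N$, unmixedness and saturatedness of $D$, containment $N\subseteq I\cap D$) descend via \cref{prop:stFacts} and the dictionary of the first paragraph. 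Under this reduction the multiplier $v'/c'$ becomes $\mbox{depol}(v')/\mbox{depol}(c')$; since $\init_y(v')=yc'$ and $\mathcal{P}_y(\mathcal{G})$ has the explicit form above, $v'$ may be taken to be some $yd_i'+r_i$ with $c'=d_i'$, whence $\mbox{depol}(v')/\mbox{depol}(c')=(yd_i+r_i)/d_i$ --- this gives both (1) as stated and the concluding ``in this case'' assertion (the representatives $(yd_j+r_j)/d_j$ agreeing modulo $N$ because the corresponding fractions for $I'$ agree modulo $NR[y']$ and the congruences are identified under $y\leftrightarrow y'$). For (1) $\Rightarrow$ (2), I would instead deduce (3) and invoke (3) $\Rightarrow$ (2): nondegeneracy follows from the height identities read in the other direction via \cref{prop:stFacts}(1), unmixedness and Cohen--Macaulayness and $G_0$-ness of $D'$ and $NR[y']$ follow from the corresponding properties of $D$ and $N$ supplied by (1), and the remaining point, unmixedness of $I'$, I would get from $\init_y(I')=D'\cap(NR[y']+(y))$: both $D'$ and $NR[y']+(y)$ are unmixed of height $\hgt(I')$, so $\init_y(I')$ is unmixed, and unmixedness passes from $\init_y(I')$ back to $I'$.

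The main obstacle, I expect, is that last point: controlling the associated primes of $I'=(\mathcal{P}_y(\mathcal{G}))$. Geometric polarization does not in general preserve associated primes (cf.\ the discussion after \cref{prop:stFacts}), so unmixedness of $I'$ must be extracted from the geometric vertex decomposition --- i.e., from the fact that $\init_y(I')$ is the intersection of the two controlled ideals $D'$ and $NR[y']+(y)$ --- together with the transfer of $(S_1)$ and equidimensionality from an initial ideal back to the ideal. The second delicate point is bookkeeping: checking that the biliaison multiplier produced by the general machinery can be normalized to the shape $(yd_i'+r_i)/d_i'$ and that its depolarization is exactly $(yd_i+r_i)/d_i$ modulo $N$, which rests on $\init_y(v')=yc'$ and on the structure of the (reduced) Gr\"obner basis, cf.\ \cref{prop:redSuffices}.
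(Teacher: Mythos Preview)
Your proposal is correct and follows essentially the same architecture as the paper: both identify $D'$ and $NR[y']$ as the geometric link and deletion of $I'=(\mathcal{P}_y(\mathcal{G}))$ via \cref{thm:inducedGB}, both invoke \cite{KR21} for (2) $\Leftrightarrow$ (3), and both transport across $y'\mapsto y$ for (1) $\Leftrightarrow$ (2).

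There are two tactical differences worth noting. For (1) $\Rightarrow$ (2) you detour through (3) and must argue unmixedness of $I'$ via $\init_y(I')$; the paper instead goes directly, observing that the relations $r_id_j-r_jd_i\in N$ encoding the biliaison in (1) become, upon substituting $y\mapsto y'$, the relations $r_id_j'-r_jd_i'\in NR[y']$ (valid since $N$ does not involve $y$), which immediately give the map in (2). This bypasses your flagged \emph{main obstacle} entirely. Second, for the multiplier normalization you flag as delicate, the paper's resolution is a one-line computation rather than an appeal to structure: writing $v'=yd'+r$ with $d'$ and $r$ free of $y$, one has $(yd'+r)d_i'-d'(yd_i'+r_i)=rd_i'-d'r_i\in I'$; since $\mathcal{P}_y(\mathcal{G})$ is a $\prec$-Gr\"obner basis and this element has no term divisible by $y$, it must lie in $(h_1,\ldots,h_\ell)R[y']=NR[y']$. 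This is precisely where the Gr\"obner basis hypothesis (equivalently, the nonzerodivisor hypothesis on $y-y'$) does its real work, and it simultaneously shows that all the fractions $(yd_i'+r_i)/d_i'$ agree modulo $NR[y']$, hence that all $(yd_i+r_i)/d_i$ agree modulo $N$ after depolarization.
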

\begin{proof}
Let $R' = R[y']$, and let $\prec$ be the term order of \cref{not:indOrder}.  For each $i \in [k]$, let $d_i'$ be the polynomial obtained from $d_i$ by the substitution $y \mapsto y'$.  By \cref{thm:inducedGB},    
 \[
\P_y(\G) = \{yd'_1+r_1, \ldots, yd'_k+r_k, h_1, \ldots, h_\ell\},
\] is a $\prec$-Gr\"obner basis of $I' = (\P_y(\G))$.  Note that $D' = (d_1', \ldots, d_k', h_1, \ldots, h_\ell)$.

  Then $\init_y(I') = D' \cap (NR'+(y)R')$ by \cite[Theorem 2.1]{KMY09}, and so $NR'$ is the geometric deletion and $D'$ is the geometric link of the geometric vertex decomposition of $I'$ at $y$. Because $(R/N)[y] \cong R'/NR'$, $R/N$ is Cohen--Macaulay and $G_0$ if and only if $R'/NR'$ is Cohen--Macaulay and $G_0$.  These equivalent conditions are therefore common to (1), (2), and (3), by the definition of elementary G-biliaison in the former two cases and by the text of the assumption in the latter.  Similarly, $D$ is umixed if and only if $D'$ is, and so that condition, too, is simultaneously obtained in (1), (2), and (3).

Thus, we may assume that $D$ and $D'$ are unmixed and that $N$ and $N'$ define Cohen--Macaulay and $G_0$ quotients.  

We will now show (3) $\iff$ (2).  The statement (3) $\implies$ (2) is proved in \cite[Corollary 4.3]{KR21}\footnote{In \cite{KR21}, the authors require $\kappa$ to be infinite.  They have since noticed that that assumption was unnecessary.  Rather than making a general choice of scalars, one may instead use the Prime Avoidance Lemma to find the nonzerodivisors required in the argument.}.  The converse (2) $\implies$ (3) is \cite[Theorem 6.1]{KR21}, up to the claim that the geometric vertex decomposition is nondegenerate.  Degeneracy in this case would mean $D' = (1)$ or $\sqrt{(D')} = \sqrt{NR[y']}$.  But these possibilities are ruled out by the requirement $\hgt(D') = \hgt(NR[y'])+1$, which is part of the definition of elementary G-biliaison.

To see (2) $\implies$ (1), we claim first that the map $v'/d'$ may be taken to be $yd_i'+r_i/d_i'$ for any $i \in [k]$. We will show that all such choices represent the same fraction modulo $N$.  Arguing from the assumption $\init_y(v')/d' = y$, one may show (for example, by using the construction in \cite{KR21}) that $v'$ may be taken to be $v' = yd'+r$ for some $r$ not involving $y$ and $d'$ may be assumed not to involve $y$. Because $(yd'+r)(d'_i)-d'(yd'_i+r_i) = rd_i'-d'r_i  \in I'$, $\P_y(\G)$ is a Gr\"obner basis, and $rd_i'-d'r_i$ does not involve $y$, we must have $rd_i'-d'r_i \in NR'$. 

From this viewpoint, it is clear that multiplication by $yd_i'+r_i'/d_i'$ maps each $d_i'+NR' \in D'/NR'$ to $yd_i'+r_i'+NR' \in I'/NR'$ and that multiplication by $d_i'/yd_i'+r_i'$ gives an inverse map.  Setting $d =\depol(d')$ and $v =\depol(v')$, each $rd_i'-d'r_i \in NR'$ implies each $rd_i-dr_i \in N$, and so multiplication by $v/d$ gives the desired elementary G-biliaison $(D/N)[-1] \cong I/N$.

The argument for (1) $\implies$ (2) is similar. If $r_id_j-r_jd_i \in N$ for all $i,j \in [k]$, then $r_id'_j-r_jd'_i \in NR[y']$ for all $i,j \in [k]$. Taking $v' = yd_1'+r_1$ and $d' = d_1'$, for example, gives the desired isomorphism, and clearly satisfies $\init_y(v')/d' = y$.
\end{proof}

\begin{example}\label{ex:mainTheroemApp1}
   Recall that the ideal $I = (yz-x^2,wz^2-y^2x,wxz-y^3)$ from \cref{ex:manySquares} has height $2$, defines a Cohen--Macaulay quotient, is minimally generated by $3$ elements, and is not weakly geometrically vertex decomposable.  The given generators form a Gr\"obner basis with respect to the lexicographic term order determined by $w>x>y>z$.  Then $N_{w,I} = (yz-x^2)$, which is a complete intersection, and $C_{w,I} = (yz-x^2, z^2, xz)$, which is $(x,z)$-primary (hence unmixed).  By \cite[Corollary 4.3]{KR21}, $I$ is G-linked to $C_{w,I}$ by elementary G-biliaison.
    
    The given generators of $C_{w,I}$ form a Gr\"obner basis with respect to any $x$-compatible term order.  (Note that, although $C_{w,I}$ is linear in $y$, the geometric vertex decomposition with respect to $y$ is degenerate.)  The polarization of the $x$-compatible Gr\"obner basis of $C_{w,I}$ is $\{z^2, xz, xx'-zy\}$, which is again a Gr\"obner basis.  (Equivalently, by \cref{thm:inducedGB}, $x-x'$ is not a zerodivisor on the quotient by $(z^2, xz, xx'-zy)$.)  In the notation of \cref{thm:polarLink} (with $C_{w,I}$ now playing the role of $I$), we have $N = (z^2)$, which is a complete intersection ideal, hence defines a Cohen--Macaulay and $G_0$ quotient, and $D = (x,z)$.  Thus, by \cref{thm:polarLink}, $C_{w,I}$ is G-linked to $D$ by elementary G-biliaison.  Hence, $I$ is G-linked to the complete intersection ideal $D$ by a sequence of elementary G-biliaisons.
        \end{example}

    We note that, in \cref{ex:mainTheroemApp1}, one cannot immediately polarize the given Gr\"obner basis of $I$ at any of the variables and obtain a Gr\"obner basis of the polarization under the induced term order.  This example is representative of our typical experience in using \cref{thm:polarLink}.  It occurs often that the polarization of a Gr\"obner basis fails to be a Gr\"obner basis. However, it was enough that the simpler ideal $C_{w,I}$, which is G-linked to $I$, have this property.

    \begin{example}
        For $t \geq 2$, if $I$ is an ideal with a generating set of the form $\{y^t q_1+r_1, \ldots, y^t q_k+r_k, h_1, \ldots, h_\ell\}$, where $y$ does not divide any $q_i$, $r_i$, or $h_i$, then the polarization of the reduced Gr\"obner basis of any $y$-compatible term order is again a Gr\"obner basis by \cref{prop:uniform-deg-in-y-polarizes}.  We may then apply \cref{thm:polarLink} $t$ times. 

        An alternate pathway to studying this class of ideals in the context of Gorenstein liaison is discussed in detail in \cite{CD}.

        Returning to \cref{ex:uniform-y-degree-polarizes}, in which we considered the Gr\"obner basis \[
        \G = \{y^2a+yd^2, y^2c+b^3, cd^2,b^3d^2,ab^3\}
        \] with respect to the lexicographic term order determined by $y>a>b>c>d$, which generates an unmixed ideal of height $3$. We saw that  
    \[
   \P_y(\G) =  \{y'ya+yd^2, y'yc+b^3, cd^2, b^3d^2, ab^3 \}
    \] is  a Gr\"obner basis under the induced term order of \cref{not:indOrder}. Via \cref{thm:polarLink}, the geometric vertex decomposition \[
 \init_y(\P_y(\G)) = (y'a+d^2, y'c, cd^2, b^3d^2, ab^3) \cap (cd^2, b^3d^2, ab^3, y) 
    \] induces the elementary G-biliaison \[
    (\G)/(cd^2, b^3d^2, ab^3) \cong [(ya+d^2, yc, cd^2, b^3d^2, ab^3)/(cd^2, b^3d^2, ab^3)](-1).
    \] Thus $(\G)$ is G-linked to $(ya+d^2, yc, cd^2, b^3d^2, ab^3)$ in two steps. Then, using a geometric vertex decomposition at any of $y$, $a$, or $d$, one readily checks that $(ya+d^2, yc, cd^2, b^3d^2, ab^3)$ and, hence, $(\G)$ are glicci.
    \end{example}

\section{Acknowledgements}
 This project began as part of the Women in Commutative Algebra and Algebraic Geometry (WICAAG) virtual workshop, organized by Megumi Harada and Claudia Miller and hosted by the Fields Institute.  The authors thank Fields Institute, the other participants, and the especially the organizers of WICAAG for bringing us together and providing a supportive community. 

 We are very grateful to Elisa Gorla for comments on an earlier version of this document. We thank also Craig Huneke for comments concerning surrounding literature.  Computations performed with Macaulay2 \cite{M2} were very helpful during our project. Finally, we thank the anonymous referee for valuable feedback and especially for asking about the preservation of Betti numbers under geometric polarization, which led to a major improvement in the statement and proof of \cref{prop:stFacts}.


\bibliographystyle{amsalpha}
\bibliography{refs}

\end{document}